\providecommand{\tabularnewline}{\\}
 \newcommand\thmsname{Theorem}
 \newcommand\nm@thmtype{thm}
 \theoremstyle{plain}
 \newenvironment{namedthm}[1]{
   \renewcommand\thmsname{#1}\renewcommand\nm@thmtype{namedtheorem}
   \begin{\nm@thmtype}
}
   {\end{\nm@thmtype}
}
\theoremstyle{plain}
\newtheorem{thm}{Theorem}[section]
\newtheorem*{thm*}{Theorem}
\newtheorem{cor}[thm]{Corollary}
\newtheorem*{cor*}{Corollary}
\newtheorem{lem}[thm]{Lemma}
\newtheorem*{lem*}{Lemma}
\newtheorem{prop}[thm]{Proposition}
\newtheorem*{prop*}{Proposition}
\newtheorem*{conjecture*}{Conjecture}
\newtheorem*{fact*}{Conjecture}
\newtheorem*{criterion*}{Criterion}
\newtheorem*{algorithm*}{Algorithm}
\newtheorem*{ax*}{Axiom}
\newtheorem*{assumption*}{Assumption}
\newtheorem*{question*}{Question}
\theoremstyle{remark}
\newtheorem{rem}[thm]{Remark}
\newtheorem*{rem*}{Remark}
\newtheorem{rems}[thm]{Remarks}
\newtheorem*{rems*}{Remarks}
\newtheorem*{claim*}{Claim}
\newtheorem*{exercise*}{Exercise}
\newtheorem*{note*}{Note}
\newtheorem{notation}[thm]{Notation}
\newtheorem*{notation*}{Notation}
\newtheorem*{summary*}{Summary}
\newtheorem*{acknowledgement*}{Acknowledgement}
\newtheorem*{conclusion*}{Conclusion}
\theoremstyle{definition}
\newtheorem{defn}[thm]{Definition}
\newtheorem*{defn*}{Definition}
\newtheorem*{example*}{Example}
\newtheorem*{examples*}{Examples}
\newtheorem*{problem*}{Problem}
\newtheorem*{xca*}{Exercise}
\newtheorem*{xcas*}{Exercises}
\newtheorem*{condition*}{Condition}
\newtheorem{void}[thm]{}
\newtheorem{proviso}[thm]{Proviso}
\theoremstyle{plain}
\newcommand{\xyL}[1]{%
\xydef@\xymatrixrowsep@{#1}
} 
\newcommand{\xyC}[1]{%
\xydef@\xymatrixcolsep@{#1}
} 
\newcommand{\rest}{\upharpoonright}
\DeclareMathOperator{\sopp}{Supp}
\newcommand{\zeroinf}{[0,\infty)}
\newcommand{\zeroapinf}{(0,\infty)}
\begin{document}

\title{Quantifier Elimination and Rectilinearisation Theorem for Generalised
Quasianalytic Algebras }

\author{J.-P. Rolin\thanks{Partially supported by Convénio FCT/CNRS 2011 Project CNRS128447776310533.}
$\ $and T. Servi\thanks{Partially supported by FCT PEst OE/MAT/UI0209/2011, by Convénio FCT/CNRS 2011 Project CNRS128447776310533 and by FCT Project PTDC/MAT/122844/2010.}}

\date{{}}
\maketitle
\begin{abstract}
We consider for every $n\in\mathbb{N}$ an algebra $\mathcal{A}_{n}$
of germs at $0\in\mathbb{R}^{n}$ of continuous real-valued functions,
such that we can associate to every germ $f\in\mathcal{A}_{n}$ a
(divergent) series $\mathcal{T}\left(f\right)$ with nonnegative real
exponents, which can be thought of as an asymptotic expansion of $f$.
We require that the $\mathbb{R}$-algebra homomorphism $f\mapsto\mathcal{T}\left(f\right)$
be injective\emph{ }(quasianalyticity property). In this setting we
prove analogue results to Denef and van den Dries' quantifier elimination
and Hironaka's rectilinearisation theorems for subanalytic sets.
\end{abstract}
\emph{2010 Mathematics Subject Classification }30D60, 14P15, 03C64
(primary), 32S45 (secondary).

\section{Introduction}

In \cite{parusinski:preparation} the author proves a preparation
theorem for subanalytic functions (whose original statement can be
found in \cite{lr:prep,paru:lip1}), as a multivariable Puiseux Theorem,
or as a primitive version of Hironaka's Rectilinearisation Theorem.
Our aim is to extend this result to a general quasianalytic setting.

\paragraph{{}}

The results of this paper lie within the framework of o-minimal geometry.
The notion of \emph{o-minimal structure} was introduced as a model-theoretic
concept. However, in order to render it and our results accessible
to a larger audience, we give a geometric version of its definition
(see \cite{vdd:tame} and \cite{vdd:mill:omin} for an overview of
the subject).

We consider a collection $\mathcal{F}$ of functions $f:\mathbb{R}^{m}\to\mathbb{R}$
for $m\in\mathbb{N}$. A set $A\subseteq\mathbb{R}^{n}$ is \emph{definable}
in the \emph{structure} $\mathbb{R}_{\mathcal{F}}=\left(\mathbb{R},<,0,1,+,\cdot,\mathcal{F}\right)$
(which we call the \emph{expansion of the real field by }$\mathcal{F}$)
if $A$ belongs to the smallest collection $\mathcal{S}=\left(\mathcal{S}_{n}\right)_{n\in\mathbb{N}}$
of subsets of $\mathbb{R}^{n}$ ($n\in\mathbb{N}$) such that:
\begin{enumerate}
\item $\mathcal{S}_{n}$ contains all semi-algebraic subsets of $\mathbb{R}^{n}$,
\item $\mathcal{S}$ contains the graphs of all functions in $\mathcal{F}$,
\item $\mathcal{S}$ is closed under the boolean set operations and projections.
\end{enumerate}
A map $f:A\subseteq\mathbb{R}^{m}\to\mathbb{R}^{n}$ is \emph{definable}
in $\mathbb{R}_{\mathcal{F}}$ if its graph is a definable subset
of $\mathbb{R}^{m+n}$ (this implies that $A$ itself is definable).

The structure $\mathbb{R}_{\mathcal{F}}$ is said to be:
\begin{enumerate}
\item \emph{model-complete} if in order to generate the whole family of
definable sets one can dispense with taking complements; 
\item \emph{o-minimal }if all definable sets have finitely many connected
components;
\item \emph{polynomially bounded} if the germs at infinity of definable
unary functions have at most polynomial growth. 
\end{enumerate}
The two following classical example of polynomially bounded model-complete
o-minimal expansion of the real field show how o-minimal geometry
can be seen as a generalisation of real algebraic and analytic geometry.
\begin{itemize}
\item $\bar{\mathbb{R}}$, where $\mathcal{F}=\emptyset$, i.e. the structure
whose definable sets are the semi-algebraic sets (the proof of model-completeness
and o-minimality follows from the Tarski-Seidenberg \emph{elimination}
principle \cite{tarski,seidenberg}).
\item $\mathbb{R}_{\mathrm{an}}$, where $\mathcal{F}$ is the set of functions
$f:\mathbb{R}^{n}\to\mathbb{R}$ ($n\in\mathbb{N}$) whose restriction
to the unit cube $\left[-1,1\right]^{n}$ is real analytic and which
are identically zero outside the unit cube (the proof is a consequence
of Gabrielov's \emph{Theorem of the Complement}, which states that\emph{
}the complement of a subanalytic subset of a real analytic manifold
is again subanalytic \cite{gabriel:proj,gabriel:expli,bm_semi_subanalytic}).
\end{itemize}
Besides the finiteness property mentioned in the definition, sets
and functions definable in an o-minimal structure share many good
geometric properties with semi-algebraic and subanalytic sets. In
particular, for all $k\in\mathbb{N}$, a definable set admits a finite
stratification with $\mathcal{C}^{k}$ definable manifolds (see for
example \cite{vdd:mill:omin}).

\paragraph{{}}

In the last two decades the list of examples has grown considerably,
including several polynomially bounded structures for which o-minimality
is a consequence of model-completeness. Among these examples we find:
\begin{itemize}
\item $\mathbb{R}_{\mathrm{an}^{*}}$, where $\mathcal{F}$ is the collection
of all so-called \emph{convergent generalised power series, }namely
all functions $f:\mathbb{R}^{n}\to\mathbb{R}$ whose restriction to
the unit cube is given by a convergent series of monomials with positive
real exponents (with a well-order condition on the support) and which
are identically zero outside the unit cube (see \cite{vdd:speiss:gen}).
\item $\mathbb{R}_{\mathcal{G}}$, where $\mathcal{F}$ is a collection
of Gevrey functions in several variables (see \cite{vdd:speiss:multisum}
for the definitions and proofs).
\item $\mathbb{R}_{\mathcal{C}\left(M\right)}$, where $\mathcal{F}$ is
a collection of $\mathcal{C}^{\infty}$ functions, restricted to the
unit cube, whose derivatives are bounded by a strictly increasing
sequence of positive constants $M=\left(M_{n}\right)_{n\in\mathbb{N}}$
satisfying the \emph{Denjoy-Carleman quasianalyticity }condition (see
\cite{rsw} for the definitions and proofs).
\item $\mathbb{R}_{\text{an},H}$, where $\mathcal{F}$ is a collection
of functions containing all real analytic functions restricted to
the unit cube and a solution $H$ of a first order analytic differential
equation which is singular at the origin (see \cite{rss}).
\item $\mathbb{R}_{\mathcal{Q}}$, in which certain Dulac transition maps
are definable (see \cite{ilyashenko:centennial} for a complete survey
on Dulac's problem and \cite{krs} for the proof of model-completeness
and o-minimality). In this example $\mathcal{F}$ is a collection
of functions, restricted to the unit cube, whose germ at zero admits
an asymptotic expansion (with positive real exponents) which is, in
general, divergent (as opposed to the case of $\mathbb{R}_{\text{an}^{*}}$).
\end{itemize}
The Tarski-Seidenberg elimination principle, which implies model-completeness,
is a \emph{quantifier elimination} result, which can be resumed by
saying that the projection of a semi-algebraic set is again semi-algebraic.
It is well known that the analogue result does not hold for $\mathbb{R}_{\text{an}}$.
However, Denef and van den Dries proved in \cite{vdd:d} that every
relatively compact subanalytic set can be described by a system of
equations and inequalities satisfied by some compositions of restricted
analytic functions and quotients. In other words, the structure $\mathbb{R}_{\text{an}}$
admits quantifier elimination in the language of restricted analytic
functions expanded by the function $D:\left(x,y\right)\mapsto x/y$
for $|y|\geq|x|$ and zero otherwise.

\paragraph{{}}

Our main goal is to prove a quantifier elimination result in the spirit
of \cite{vdd:d} for all the structures in the above examples. In
order to do this, we first define a common framework of which each
of the above examples appears as a special case. Secondly, we isolate
the common properties of these structures which are relevant to the
proof of their o-minimality. Finally, we proceed to develop a new
strategy for the proof of quantifier elimination. The originality
of this work lies especially in this last part, since the main tool
in Denef and van den Dries' proof, namely the Weierstrass Preparation
Theorem, is not available to us in this setting, as we will explain
later.

\paragraph{{}}

The proofs of model-completeness and o-minimality for the above examples
are similar and all inspired by \cite{gabriel:proj,bm_semi_subanalytic}.
The common strategy is to parametrise a definable set by maps whose
components are in $\mathcal{F}$. To obtain this, the key property
is the \emph{quasianalyticity} (explained below) of the algebras generated
by the germs of functions in $\mathcal{F}$. However, there are significant
differences between these examples. In some of them (see \cite{vdd:speiss:gen,vdd:speiss:multisum,krs}),
we can use the\emph{ }Weierstrass Preparation Theorem\emph{ }with
respect to a certain type of variables. In this case the proof of
o-minimality is very close to Gabrielov's proof for $\mathbb{R}_{\text{an}}$
in \cite{gabriel:proj}. In the other cases, one uses instead some
form of \emph{resolution of singularities}, which allows to get to
the required parametrisation result in a more indirect way.

\paragraph{{}}

Our first goal is to unify all these different proofs, so that all
the examples above appear as particular cases of a general o-minimality
statement. We consider, for all $n\in\mathbb{N}$, an algebra $\mathcal{A}_{n}$
of continuous functions such that to each germ at $0\in\mathbb{R}^{n}$
of $f\in\mathcal{A}_{n}$ we can associate a (divergent) series $\mathcal{T}\left(f\right)$
with nonnegative real exponents. This series can be thought of as
an \emph{asymptotic expansion} of $f$. The algebra of all germs at
$0$ of functions in $\mathcal{A}_{n}$ is \emph{quasianalytic }if
the $\mathbb{R}$-algebra homomorphism $\mathcal{T}:f\mapsto\mathcal{T}\left(f\right)$
is injective. The link between quasianalyticity and o-minimality appears
already in \cite{vdd:o_minimal_real_analytic,vdd:speiss:multisum}.
The quasianalyticity property can be obtained de facto (as in \cite{vdd:speiss:gen},
where the functions under consideration are \emph{equal} to the sum
of their convergent expansion), or it has to be proved using analysis
techniques (resummation methods in \cite{vdd:speiss:multisum} and
\cite{rss}, Denjoy-Carleman's theorem in \cite{rsw}, Ilyashenko's
method for Dulac's problem in \cite{krs}).

\paragraph{{}}

It is not our purpose here to prove quasianalyticity. We assume rather
to have been given a collection of quasianalytic algebras. We show,
then, that the structure $\mathbb{R}_{\mathcal{A}}$ generated by
the algebras $\mathcal{A}_{n}$ is model-complete, o-minimal and polynomially
bounded. Given the level of generality we aim to keep, we cannot make
use of the Weierstrass Preparation Theorem. Hence, we show how to
prove the parametrisation result for definable sets by using an appropriate
\emph{blow-up} procedure. This latter takes inspiration from the methods
in \cite{bm_semi_subanalytic}, adapted to series with real exponents
in \cite{vdd:speiss:gen}.

\paragraph{{}}

Once we have proved o-minimality for the structures which fall into
this general framework, we can proceed towards our quantifier elimination
result. In \cite{vdd:d}, by a clever use of the Weierstrass Preparation
Theorem and of the Tarski-Seidenberg elimination principle, Denef
and van den Dries prove their result without needing to solve explicitly
any system of analytic equations.

In our framework, recall that we have already shown that a bounded
$\mathbb{R}_{\mathcal{A}}$-definable set can be parametrised by maps
whose components are in the algebras $\mathcal{A}_{n}$. Our aim is
to eliminate the parameters. Since we cannot use the Weierstrass Preparation
Theorem and hence reduce to the polynomial situation, our strategy
is, instead, to solve explicitly the parametrising equations with
respect to the parameters. In order to do that, we use o-minimality
of the structure $\mathbb{R}_{\mathcal{A}}$, established in the first
part of the paper, and apply an \emph{o-minimal Preparation Theorem}
for functions definable in a polynomially bounded o-minimal structure
\cite{vdd:speiss:preparation_theorems}. This latter result, whose
proof uses valuation and model-theoretic methods, allows to find,
piecewise, a ``principal part''\emph{ }of a definable function.
This is the starting point for a Newton-Puiseux solving method for
quasianalytic equations.

\paragraph{{}}

Finally, Denef and van den Dries deduce Hironaka's \emph{Rectilinearisation
Theorem} \cite{hironaka_real_analytic} as a corollary of their main
result. This result states that every subanalytic set $A\subseteq\mathbb{R}^{n}$
can be transformed into a finite union of quadrants of dimension at
most $\text{dim}\left(A\right)$, via a finite sequence of blow-ups
of the ambient space $\mathbb{R}^{n}$. In the same spirit, we prove
a Rectilinearisation Theorem for bounded $\mathbb{R}_{\mathcal{A}}$-definable
sets.

\paragraph{{}}

The plan of the paper is the following. In Section \ref{sec:Setting-and-main}
we introduce formally the setting we are working in and, in Subsection
\ref{sub:The-theorems}, we give the two main statements we prove,
namely o-minimality of $\mathbb{R}_{\mathcal{A}}$ (Theorem A) and
quantifier elimination (Theorem B). Section \ref{sec:Monomialisation of series}
is dedicated to a monomialisation, or desingularisation, algorithm.
In Section \ref{sec:Parametrisation-of--subanalytic-1} we prove the
parametrisation result mentioned above. The proof of Theorem A is
completed in Subsection \ref{subsec:Proof-of-o-minimality}, following
a traditional approach à la Gabrielov, thanks to Proposition \ref{prop: trivial manifolds for subanal sets}.
Section \ref{sec:Vertical-monomialisation} is dedicated to the proof
of Theorem B. The key result is a monomialisation theorem for $\mathbb{R}_{\mathcal{A}}$-definable
functions (Theorem \ref{thm: monomialis of def functions}), from
which we deduce the Rectilinearisation Theorem \ref{cor: rectilinearisation}
and Theorem B. The proof of Theorem \ref{thm: monomialis of def functions}
is obtained by a significant modification of the monomialisation process
described in Subsection \ref{sub:Monomialisation-of-generalised}.
We develop a \emph{vertical monomialisation algorithm} which allows
to solve explicitly a system of quasianalytic equations (Theorem \ref{thm: ABC},
part B), inverse a parametrisation (Theorem \ref{thm: ABC}, part
C) and finally monomialise definable functions (Theorem \ref{thm: ABC},
part A). The first step for solving explicitly quasianalytic equations
is to ``weakly monomialise'' the solutions. This is done in Lemma
\ref{lem: weak monomialisation}, where we use the o-minimal Preparation
Theorem in \cite{vdd:speiss:preparation_theorems} mentioned above.

\section{Setting and main results\label{sec:Setting-and-main}}

\subsection{Generalised power series\label{sub:gen power series}}
\begin{defn}
\label{def:good set}Let $m\in\mathbb{N}$. A set $S\subset[0,\infty)^{m}$
is called \emph{good} if $S$ is contained in a cartesian product
$S_{1}\times\ldots\times S_{m}$ of well ordered subsets of $\zeroinf$.
If $S$ is a good set, define $S_{\mathrm{min}}$ as the set of minimal
elements of $S$. By \cite[Lemma 4.2]{vdd:speiss:gen}, $S_{\mathrm{min}}$
is finite. For $\alpha=\left(\alpha_{1},\ldots,\alpha_{m}\right),\beta=\left(\beta_{1},\ldots,\beta_{m}\right)\in S$
we write $\alpha\leq\beta$ if $\alpha_{i}\leq\beta_{i}$ for all
$i=1,\ldots,m$. So if $S$ is good, $\forall\alpha\in S\ \exists\beta\in S_{\mathrm{min}}$
such that $\alpha\geq\beta$.

Denote by $\Sigma\left(S\right)$ the set of all finite sums (done
component-wise) of elements of $S$. By \cite[Lemma 4.3]{vdd:speiss:gen},
if $S$ is good then $\Sigma\left(S\right)$ is also good.
\end{defn}
We recall the definition of generalised formal power series, originally
due to \cite{vdd:speiss:gen}.
\begin{defn}
\label{def:power series} Let $\mathbb{A}$ be a commutative ring,
$m\in\mathbb{N}$ and $X=(X_{1},\ldots,X_{m})$ be a tuple of variables.
We consider formal series 
\[
F(X)=\sum_{\alpha}c_{\alpha}X^{\alpha},
\]
 where $\alpha=(\alpha_{1},\ldots,\alpha_{m})\in\zeroinf^{m},\ c_{\alpha}\in\mathbb{A}$
and $X^{\alpha}$ denotes the formal monomial $X_{1}^{\alpha_{1}}\cdot\ldots\cdot X_{m}^{\alpha_{m}}$,
and the set 
\[
\sopp(F):=\{\alpha\in\zeroinf^{m}:\ c_{\alpha}\not=0\}\ (\mathrm{the\ support\ of\ }F)
\]
 is a good set. These series are added the usual way and form a ring
denoted by $\mathbb{A}\llbracket X^{*}\rrbracket$. 
\end{defn}
The ring of usual power series $\mathbb{A}\llbracket X\rrbracket$
can be seen as the subring of $\mathbb{A}\llbracket X^{*}\rrbracket$
consisting of the series $F$ with $\sopp(F)\subset\mathbb{N}^{m}$.

\begin{defn}
\label{def:total support}Let $\mathcal{F}\subset\mathbb{A}\left\llbracket X^{*}\right\rrbracket $
be a (possibly infinite) family of series such that the \emph{total
support of $\mathcal{F}$} 
\[
\mathrm{Supp}\left(\mathcal{F}\right):=\bigcup_{F\in\mathcal{F}}\mathrm{Supp}\left(F\right)
\]
 is a good set. Then $\mathrm{Supp}\left(\mathcal{F}\right)_{\mathrm{min}}$
is finite and we define the \emph{set of minimal monomials of $\mathcal{F}$}
\[
\mathcal{F}_{\mathrm{min}}:=\left\{ X^{\alpha}:\ \alpha\in\mathrm{Supp}\left(\mathcal{F}\right)_{\mathrm{min}}\right\} .
\]

\end{defn}

\begin{defn}
\label{def:mixed series} We fix $m,n\in\mathbb{N}$. Let $(X,Y)=(X_{1},\ldots,X_{m},Y_{1},\ldots,Y_{n})$.
We define $\mathbb{A}\llbracket X^{*},Y\rrbracket$ as the subring
of $\mathbb{A}\llbracket(X,Y)^{*}\rrbracket$ consisting of those
series $F$ such that $\sopp(F)\subset\zeroinf^{m}\times\mathbb{N}^{n}$.
Moreover, for $F\in\mathbb{A}\left\llbracket X^{*},Y\right\rrbracket $,
we define 
\[
\mathrm{Supp}_{X}\left(F\right):=\left\{ \alpha\in[0,\infty)^{m}:\ \exists N\in\mathbb{N}^{n}\mathrm{\ s.t.\ }\left(\alpha,N\right)\in\mathrm{Supp}\left(F\right)\right\} .
\]

Notice that $\mathrm{Supp}\left(F\right)$ is good if and only if
$\mathrm{Supp}_{X}\left(F\right)$ is good.
\end{defn}

\begin{notation}
\label{not: derivatives}Let $F\in\mathbb{R}\left\llbracket X^{*},Y\right\rrbracket $.
For $i\in\left\{ 1,\ldots,m\right\} $ and $j\in\left\{ 1,\ldots,n\right\} $,
let 
\[
\hat{X}=\left(X_{1},\ldots,X_{i-1},X_{i+1},\ldots,X_{m}\right)\text{ and }\hat{Y}=\left(Y_{1},\ldots,Y_{j-1},Y_{j+1},\ldots,Y_{n}\right).
\]
We can write $F\left(X,Y\right)=\sum_{\alpha\in S}G_{\alpha}\left(\hat{X},Y\right)X_{i}^{\alpha}=\sum_{k\in\mathbb{N}}H_{k}\left(X,\hat{Y}\right)Y_{j}^{k}$,
where $S$ is the $i^{th}$ projection of $\mathrm{Supp}\left(F\right)$
and $G_{\alpha}\in\mathbb{R}\left\llbracket \hat{X},Y\right\rrbracket ,\ H_{k}\in\mathbb{R}\left\llbracket X^{*},\hat{Y}\right\rrbracket $.
We denote by $\partial_{i}F$ the series $\sum_{\alpha\in S}\alpha G_{\alpha}X_{i}^{\alpha}$
and by $\frac{\partial F}{\partial Y_{j}}$ the series $\sum_{k\geq1}kH_{k}Y_{j}^{k-1}$.
\end{notation}

\begin{notation}
Let $\lambda,\alpha>0$. We denote by $\left(Y+\lambda\right)^{\alpha}$
the power series $\lambda^{\alpha}\sum_{i\in\mathbb{N}}\binom{\alpha}{i}\left(\frac{Y}{\lambda}\right)^{i}\in\mathbb{R}\left\llbracket Y\right\rrbracket $.
\end{notation}

\subsection{Quasianalytic algebras\label{sub:Quasi-analytic-algebras}}

In this subsection we define the basic object of our interest, namely
we fix a family $\mathcal{A}$ of real functions satisfying the properties
in \ref{vuoto:conditions on functions} and \ref{def: A-analytic}.
Moreover, we require the collection of all germs at zero of the functions
in $\mathcal{A}$ to satisfy the properties in \ref{def: quasi-analyticity}
and \ref{emp:properties of the morph}.
\begin{notation}
Let $m,n\in\mathbb{N}$. A \emph{polyradius }of type $\left(m,n\right)$
in $\mathbb{R}^{m+n}$ is a tuple of the form $r=(s,t)=(s_{1},\ldots,s_{m},t_{1},\ldots,t_{n})\in[0,\infty)^{m+n}$.
If $r,r'$ are polyradii in $\mathbb{R}^{m+n}$, we write $r'\leq r$
if $s_{i}'\leq s_{i}$ for all $i=1,\ldots,m$ and $t_{j}'\leq t_{j}$
for all $j=1,\ldots,n$. If $r'$ is of type $\left(m,n-1\right)$
and $r$ is of type $\left(m,n\right)$, we write $r'\leq r$ if $\left(r',0\right)\leq r$.
We also define: 
\[
I_{m,n,r}:=(0,s_{1})\times\ldots\times(0,s_{m})\times(-t_{1},t_{1})\times\ldots\times(-t_{n},t_{n}),
\]
 
\[
\hat{I}_{m,n,r}:=[0,s_{1})\times\ldots\times[0,s_{m})\times(-t_{1},t_{1})\times\ldots\times(-t_{n},t_{n}).
\]
We also denote by $\hat{I}_{m,n,\infty}$ the set $[0,+\infty)^{m}\times\mathbb{R}^{n}$.\end{notation}
\begin{void}
\label{vuoto:conditions on functions}For every $m,n\in\mathbb{N}$
and $r\in\zeroapinf^{m+n}$, we let $\mathcal{A}_{m,n,r}$ be an algebra
of real functions, which are $\mathcal{C}^{0}$ on $\hat{I}_{m,n,r}$
and $\mathcal{C}^{1}$ on $I_{m,n,r}$. We require that the algebras
$\mathcal{A}_{m,n,r}$ satisfy the following list of conditions. Let
$x=\left(x_{1,}\ldots,x_{m}\right)$ and $y=\left(y_{1},\ldots,y_{n}\right)$.
\begin{itemize}
\item The coordinate functions of $\mathbb{R}^{m+n}$ are in $\mathcal{A}_{m,n,r}$. 
\item If $r'\leq r$ and $f\in\mathcal{A}_{m,n,r}$, then $f\rest\hat{I}_{m,n,r'}\in\mathcal{A}_{m,n,r'}$.
\item If $f\in\mathcal{A}_{m,n,r}$ then there exists $r'>r$ and $g\in\mathcal{A}_{m,n,r'}$
such that $g\rest\hat{I}_{m,n,r}=f$.
\item If $f\in\mathcal{A}_{m,n,r}$, $s\in\zeroapinf$ and $r'=\left(s_{1},\ldots,s_{m},s,t_{1},\ldots,t_{n}\right)$
then the function
\[
\xyC{0mm}\xyL{0mm}\xymatrix{F\colon & \hat{I}_{m+1,n,r'}\ar[rrrr] & \  & \  & \  & \mathbb{R}\\
 & \left(x_{1},\ldots,x_{m},z,y\right)\ar@{|->}[rrrr] &  &  &  & f\left(x,y\right)
}
\]
is in $\mathcal{A}_{m+1,n,r'}$.
\item $\mathcal{A}_{m,n,r}\subset\mathcal{A}_{m+n,0,r}$, in the sense that
we identify $f\left(x_{1},\ldots,x_{m},y_{1},\ldots,y_{n}\right)\in\mathcal{A}_{m,n,r}$
with $f\left(x_{1},\ldots,x_{m+n}\right)\in\mathcal{A}_{m+n,0,r}$. 
\item Let $\sigma\in\Sigma_{m}$ be a permutation and let $\sigma\left(x\right)=\left(x_{\sigma\left(1\right)},\ldots,x_{\sigma\left(m\right)}\right)$.
If $f\in\mathcal{A}_{m,n,r}$ then there exists $f_{\sigma}\in\mathcal{A}_{m,n,r}$
such that $f_{\sigma}\left(x,y\right)=f\left(\sigma\left(x\right),y\right)$.
\item If $f\in\mathcal{A}_{m,n,r}$ then there exists $g\in\mathcal{A}_{m-1,n,r}$
such that 
\[
g\left(x_{1},\ldots,x_{m-1},y\right)=f\left(x_{1},\ldots,x_{m-1},0,y\right).
\]

\item If $f\in\mathcal{A}_{m,n,r}$ then $f\left(x_{1}/s_{1},\ldots,x_{m}/s_{m},y_{1}/t_{1},\ldots,y_{n}/t_{n}\right)\in\mathcal{A}_{m,n,1}.$
\end{itemize}
\end{void}
\begin{notation}
\label{not: germs}We denote by $\mathcal{A}_{m,n}$ the algebra of
germs at the origin of the elements of $\mathcal{A}_{m,n,r}$, for
$r$ a polyradius in $\zeroapinf^{m+n}$. When $n=0$ we write $\mathcal{A}_{m}$
for $\mathcal{A}_{m,0}$. We will often denote the germ and a representative
by the same letter. 
\end{notation}
We require all the functions in $\mathcal{A}$ to satisfy the property
described in the next definition, which mimics the property of real
analytic germs of being analytic on a whole neighbourhood of the origin.
\begin{defn}
\label{def: A-analytic}Let $f\left(x,y\right)\in\mathcal{A}_{m,n,r}$,
for some $m,n\in\mathbb{N}$ and $r\in\left(0,\infty\right)^{m+n}$.
For $a\in\hat{I}_{m,n,r}$, put $m'=|\left\{ i:\ 1\leq i\leq m,\ a_{i}=0\right\} |$
and choose a permutation $\sigma$ of $\left\{ 1,\ldots,m\right\} $
such that $\sigma\left(\left\{ i:\ 1\leq i\leq m,\ a_{i}=0\right\} \right)=\left\{ 1,\ldots,m'\right\} $.
We say that $f$ is $\mathcal{A}$-analytic if for all $a\in\hat{I}_{m,n,r}$
there exists a germ $g_{a}\in\mathcal{A}_{m',m+n-m'}$ such that the
germ at $a$ of $f$ is equal to the germ at $a$ of $ $
\[
\left(x,y\right)\mapsto g_{a}\left(x_{\sigma\left(1\right)}-a_{\sigma\left(1\right)},\ldots,x_{\sigma\left(m\right)}-a_{\sigma\left(m\right)},y_{1}-a_{m+1},\ldots,y_{n}-a_{m+n}\right).
\]

\end{defn}

The next definition establishes a relevant analogy with the behaviour
of analytic germs, namely the fact that a germ in the collection $\{\mathcal{A}_{m,n}:\ m,n\in\mathbb{N}\}$
is uniquely determined by its ``generalised Taylor expansion''. 
\begin{defn}
\label{def: quasi-analyticity}We say that $\left\{ \mathcal{A}_{m,n}:\ m,n\in\mathbb{N}\right\} $
is a collection of \emph{quasianalytic algebras} if, for all $m,n\in\mathbb{N}$,
there exists an \textbf{injective} $\mathbb{R}$-algebra morphism
\[
\mathcal{T}_{m,n}:\mathcal{A}_{m,n}\to\mathbb{R}\left\llbracket X^{*},Y\right\rrbracket ,
\]

where $X=\left(X_{1},\ldots,X_{m}\right),\ Y=\left(Y_{1},\ldots,Y_{n}\right)$.
Moreover, for all $m'\geq m,\ n'\geq n$ we require that the morphism
$\mathcal{T}_{m',n'}$ extend $\mathcal{T}_{m,n}$, hence, from now
on we will write $\mathcal{T}$ for $\mathcal{T}_{m,n}$.
\end{defn}
We require $\{\mathcal{A}_{m,n}:\ m,n\in\mathbb{N}\}$ to be a collection
of quasianalytic algebras which, together with the morphism $\mathcal{T}$,
satisfies the list of closure and compatibility properties in \ref{emp:properties of the morph}
below. First, we need some definitions.
\begin{defn}
\label{def: admissible exponents}A number $\alpha\in\zeroinf$ is
an \emph{admissible exponent} if there are $m,n\in\mathbb{N},$ $f\in\mathcal{A}_{m,n},\ \beta\in\sopp\left(\mathcal{T}\left(f\right)\right)\subset\mathbb{R}^{m}\times\mathbb{N}^{n}$
such that $\alpha$ is a component of $\beta$. We denote by $\mathbb{A}$
semi-ring generated by all admissible exponents and by $\mathbb{K}$
the field of fractions of $\mathbb{A}$.
\end{defn}

\begin{defn}
\label{def: blow-up charts}Let $m,n\in\mathbb{N},\ \left(x,y\right)=\left(x_{1},\ldots,x_{m},y_{1},\ldots,y_{n}\right)$.
For $m',n'\in\mathbb{N}$ with $m'+n'=m+n$, we set $\left(x',y'\right)=\left(x_{1}',\ldots,x_{m'}',y_{1}',\ldots,y_{n'}'\right)$.
Let $r,r'$ be polyradii in $\mathbb{R}^{m+n}$. A \emph{blow-up chart
}is a map

\[
\xyC{0mm}\xyL{0mm}\xymatrix{\pi\colon & \hat{I}_{m',n',r'}\ar[rrrr] & \  & \  & \  & \hat{I}_{m,n,r}\\
 & \left(x',y'\right)\ar@{|->}[rrrr] &  &  &  & \left(x,y\right)
}
\]
of either of the following forms:
\begin{itemize}
\item For $1\leq j<i\leq m$ and $\lambda\in(0,\infty)$, let $m'=m-1$
and $n'=n+1$ and define
\begin{align*}
\pi_{i,j}^{\lambda} & \left(x',y'\right)=\left(x,y\right),\ \ \ \mathrm{where}\ \begin{cases}
x_{k}=x_{k}' & 1\leq k<i\\
x_{i}=x_{j}'\left(\lambda+y_{1}'\right)\\
x_{k}=x_{k-1}' & i<k\leq m\\
y_{k}=y_{k+1}' & 1\leq k\leq n
\end{cases}.
\end{align*}

\item For $1\leq j,i\leq m$, let $m'=m$ and $n'=n$ and define
\begin{align*}
\pi_{i,j}^{0} & \left(x',y'\right)=\left(x,y\right),\ \ \ \mathrm{where}\ \begin{cases}
x_{k}=x_{k}' & 1\leq k\leq m,\ k\not=i\\
x_{i}=x_{j}'x_{i}'\\
y_{k}=y_{k}' & 1\leq k\leq n
\end{cases}\\
\mathrm{and}\  & \pi_{i,j}^{\infty}=\pi_{j,i}^{0};
\end{align*}

\item For $1\leq i\leq n,\ 1\leq j\leq m$ and $\lambda\in\mathbb{R}$,
let $m'=m$ and $n'=n$ and define
\begin{align*}
\pi_{m+i,j}^{\lambda}\left(x',y'\right) & =\left(x,y\right),\ \ \ \mathrm{where}\ \begin{cases}
x_{k}=x_{k}' & 1\leq k\leq m\\
y_{i}=x_{j}'\left(\lambda+y_{i}'\right)\\
y_{k}=y_{k}' & 1\leq k\leq n,\ k\not=i
\end{cases}.
\end{align*}

\item For$1\leq i\leq n,\ 1\leq j\leq m$, let $m'=m+1$ and $n'=n-1$ and
define
\[
\pi_{m+i,j}^{\pm\infty}\left(x',y'\right)=\left(x,y\right),\ \ \ \mathrm{where}\ \begin{cases}
x_{k}=x_{k}' & 1\leq k\leq m,\ k\not=j\\
x_{j}=x_{m+1}'x_{j}'\\
y_{k}=y_{k}' & 1\leq k<i\\
y_{i}=\pm x_{m+1}'\\
y_{k}=y_{k-1}' & i<k\leq n
\end{cases}.
\]

\item For $1\leq i,j\leq n$ and $\lambda\in\mathbb{R}$, let $m'=m$ and
$n'=n$ and define
\begin{align*}
\pi_{m+i,m+j}^{\lambda}\left(x',y'\right) & =\left(x,y\right),\ \ \ \mathrm{where}\ \begin{cases}
x_{k}=x_{k}' & 1\leq k\leq m\\
y_{i}=y_{j}'\left(\lambda+y_{i}'\right)\\
y_{k}=y_{k}' & 1\leq k\leq n,\ k\not=i
\end{cases}\\
\mathrm{and}\  & \pi_{m+i,m+j}^{\infty}=\pi_{m+j,m+i}^{0}.
\end{align*}

\end{itemize}
\begin{flushleft}
We also define the following collections:
\par\end{flushleft}

\begin{tabular}{l}
$\mathrm{for}\ 1\leq i,j\leq m,\ \ \pi_{i,j}:=\left\{ \pi_{i,j}^{\lambda}:\ \lambda\in\left[0,\infty\right]\right\} ,$\tabularnewline
$\mathrm{for}\ 1\leq i\leq n,\ 1\leq j\leq m,\ \ \pi_{m+i,j}:=\left\{ \pi_{m+i,j}^{\lambda}:\ \lambda\in\mathbb{R}\cup\left\{ \pm\infty\right\} \right\} ,$\tabularnewline
$\mathrm{for}\ 1\leq i,j\leq n,\ \ \pi_{m+i,m+j}:=\left\{ \pi_{i,j}^{\lambda}:\ \lambda\in\mathbb{R}\cup\left\{ \infty\right\} \right\} .$\tabularnewline
\end{tabular}\end{defn}
\begin{void}
\label{emp:properties of the morph}We require that the family of
algebras of germs $\left\{ \mathcal{A}_{m,n}:\ m,n\in\mathbb{N}\right\} $
satisfy the following properties:
\begin{enumerate}
\item \emph{Monomials and ramifications.} If $\mathbb{A}\not=$$\mathbb{N}$,
then for every $\alpha\in\mathbb{K}^{\geq0}$, the germ $x_{1}\mapsto x_{1}^{\alpha}$
is in $\mathcal{A}_{1}$ and $\mathcal{T}\left(x_{1}^{\alpha}\right)=X_{1}^{\alpha}$.
Moreover, if $f\in\mathcal{A}_{m,n}$, then $g\left(x,y\right):=f\left(x_{1}^{\alpha},x_{2},\ldots,x_{m},y\right)\in\mathcal{A}_{m,n}$
and $\mathcal{T}\left(g\right)=\mathcal{T}$$\left(f\right)\left(X_{1}^{\alpha},X_{2},\ldots,X_{m},Y\right)$.
\item \emph{Monomial division.} Let $f\in\mathcal{A}_{m,n}$ and suppose
that there exist $\alpha\in\mathbb{K},$ $n\in\mathbb{N}$ and $G\in\mathbb{R}\left\llbracket X^{*},Y\right\rrbracket $
such that $\mathcal{T}(f)\left(X,Y\right)=X_{1}^{\alpha}Y_{1}^{n}G\left(X,Y\right)$.
Then there exists $g\in\mathcal{A}_{m,n}$ such that $f\left(x,y\right)=x_{1}^{\alpha}y_{1}^{n}g\left(x,y\right)$.
It follows that $\mathcal{T}\left(g\right)=G$: in fact, $\mathcal{T}\left(f\right)=\mathcal{T}\left(x_{1}^{\alpha}y_{1}^{n}g\right)=X_{1}^{\alpha}Y_{1}^{n}\mathcal{T}\left(g\right)$
and $\mathcal{T}\left(f\right)$ is also equal to $X_{1}^{\alpha}Y_{1}^{n}G$. 
\item \emph{Permutations of the $x$-variables. }Let $\sigma\in\Sigma_{m}$
be a permutation and $f\in\mathcal{A}_{m,n}$. Then $\mathcal{T}\left(f_{\sigma}\right)=\mathcal{T}\left(f\right)_{\sigma}$.
\item \emph{Setting a variable equal to zero. }For $f\in\mathcal{A}_{m,n}$,
we have 
\[
\mathcal{T}\left(f\left(x_{1},\ldots,x_{m-1},0,y\right)\right)=\mathcal{T}\left(f\right)\left(X_{1},\ldots,X_{m-1},0,Y\right).
\]

\item \emph{Composition in the $y$-variables.} Let $g_{1},\ldots,g_{n}\in\mathcal{A}_{m',n'}$
with $g_{i}\left(0\right)=0$ and let $f\in\mathcal{A}_{m,n}$. Then
$h:=f\left(x,g_{1},\ldots,g_{n}\right)\in\mathcal{A}_{m+m',n'}$ and
$\mathcal{T}\left(h\right)=\mathcal{T}\left(f\right)\left(X,\mathcal{T}\left(g_{1}\right),\ldots,\mathcal{T}\left(g_{n}\right)\right)$.
\item \emph{Implicit functions in the $y$-variables.} Let $f\in\mathcal{A}_{m,n}$
and suppose that $\frac{\partial f}{\partial y_{n}}\left(0\right)$
exists and is nonzero. Then there exists $g\in\mathcal{A}_{m,n-1}$
such that $f\left(x,y_{1},\ldots,y_{n-1,}g\left(x,y_{1},\ldots,y_{n-1}\right)\right)=0$.
It follows that 
\[
\mathcal{T}\left(f\right)\left(X,Y_{1},\ldots,Y_{n-1},\mathcal{T}\left(g\right)\left(X,Y_{1},\ldots,Y_{n-1}\right)\right)=0.
\]

\item \emph{Blow-ups.} Let $f\in\mathcal{A}_{m,n}$ and $\pi:\hat{I}_{m',n',\infty}\to\hat{I}_{m,n,\infty}$
be a blow-up chart (see Definition \ref{def: blow-up charts}). Then
$f\circ\pi\in\mathcal{A}_{m',n'}$ and $\mathcal{T}\left(f\circ\pi\right)=\mathcal{T}\left(f\right)\circ\pi$.
\end{enumerate}
\end{void}
\begin{rems}
\label{rems: properties of the algebras}$\ $Here is a list of consequences
of the above properties.\smallskip{}

\noindent \emph{Closure under differentiation}. $\mathcal{A}_{m,n}$
is closed under partial derivatives with respect to the $y$-variables
and $\mathcal{T}$ is compatible with this operation: $f\in\mathcal{A}_{m,n}\Rightarrow\frac{\partial f}{\partial y_{i}}\in\mathcal{A}_{m,n}$
and $\mathcal{\mathcal{T}}\left(\frac{\partial f}{\partial y_{i}}\right)=\frac{\partial\mathcal{\mathcal{T}}\left(f\right)}{\partial y_{i}}$.
In fact, consider the germ $g\left(x,y,z_{1}\right):=f\left(x,\ldots,y_{i}+z_{1},\ldots\right)-f\left(x,y\right)\in\mathcal{A}_{m,n+1}$.
Then $g\left(x,y,0\right)=0$, so $\mathcal{\mathcal{T}}\left(g\right)\left(x,y,0\right)$;
hence $\mathcal{\mathcal{T}}\left(g\right)\left(x,y,z_{1}\right)=z_{1}H\left(x,y,z_{1}\right)$,
for some series $H$. By monomial division, there exists $h\in\mathcal{A}_{m,n+1}$
such that $g\left(x,y,z_{1}\right)=z_{1}h\left(x,y,z_{1}\right)$
and hence $\frac{\partial f}{\partial y_{i}}\left(x,y\right)=h\left(x,y,0\right)\in\mathcal{A}_{m,n}$.\smallskip{}

\noindent \emph{Taylor expansion.} If $f\in\mathcal{A}_{m,1}$, then
$\mathcal{\mathcal{T}}\left(f\right)\left(0,Y\right)$ is the Taylor
expansion of $f\left(0,y\right)$ with respect to $y$. In fact, $\mathcal{\mathcal{T}}\left(f\right)=\sum_{i=0}^{\infty}\frac{1}{i!}\frac{\partial^{i}\mathcal{\mathcal{T}}\left(f\right)}{\partial Y^{i}}\left(X,0\right)Y^{i}=\sum_{i=0}^{\infty}\frac{1}{i!}\mathcal{\mathcal{T}}\left(\frac{\partial^{i}f}{\partial y^{i}}\left(x,0\right)\right)Y^{i}$.\smallskip{}

\noindent \emph{Closure under $\partial_{i}$. }Let $f\in\mathcal{A}_{m,n}$
and let $\partial_{i}f$ be the germ at zero of $x_{i}\frac{\partial f}{\partial x_{i}}$
(extended by continuity at zero), for $i=1,\ldots,m$. Then $\partial_{i}f\in\mathcal{A}_{m,n}$
and $\mathcal{T}\left(\partial_{i}f\right)=\partial_{i}\mathcal{T}\left(f\right)$.
In fact, consider 
\[
g\left(x,y,z\right):=f\left(x_{1},\ldots,x_{i-1},x_{i}\left(1+z\right),x_{i+1},\ldots,x_{m},y\right)\in\mathcal{A}_{m,n+1}.
\]
Then, $\frac{\partial g}{\partial z}\left(x,y,0\right)\in\mathcal{A}_{m,n}$.
Notice that, for some $r\in\left(0,\infty\right)^{m+n+1}$, there
is a representative of $g$ (still denoted by $g$), such that
\[
\frac{\partial g}{\partial z}\left(x,y,z\right)=x_{i}\frac{\partial f}{\partial x_{i}}\left(x_{1},\ldots,x_{i}\left(1+z\right),\ldots,x_{m},y\right)\mathrm{\ on\ }I_{m,n+1,r}.
\]
 Hence, $\partial_{i}f=\frac{\partial g}{\partial z}\left(x,y,0\right)$.
The compatibility with $\mathcal{T}$ follows easily.\smallskip{}

\noindent \emph{Closure under homothety}. Let $f\in\mathcal{A}_{m}$
and $\lambda\in\zeroapinf$. Then $g\left(x\right)=f\left(\lambda x_{1},x_{2},\ldots,x_{m}\right)\in\mathcal{A}_{m}$.
In fact, let $F\left(x_{1},z,x_{2},\ldots,x_{m}\right)=f\left(x\right)\in\mathcal{A}_{m+1}$.
Using the appropriate blow-up chart involving $x_{1}$ and $z$, we
obtain $G\left(x_{1},z,x_{2}\ldots,x_{m}\right)=F\left(x_{1},x_{1}\left(\lambda+z\right),x_{2},\ldots,x_{m}\right)\in\mathcal{A}_{m+1}$
and finally $g\left(x\right)=G\left(x_{1},0,x_{2}\ldots,x_{m}\right)\in\mathcal{A}_{m}$.\smallskip{}

\noindent \emph{$\mathcal{C}^{\infty}$ germs}. Let $f\in\mathcal{A}_{1}$
be $\mathcal{C}^{\infty}$ at zero and suppose that $f\left(0\right)=0$.
Then $f\in\mathcal{A}_{0,1}$, i.e. the exponents appearing in $\mathcal{T}\left(f\right)$
are natural numbers. In fact, let $\alpha\in\left(0,\infty\right)\setminus\mathbb{N}$
be the smallest exponent of $\mathcal{\mathcal{T}}\left(f\right)$
which is not natural. Then there exists $g\in\mathcal{A}_{1}$ such
that $f\left(x\right)-c_{1}x^{n_{1}}-\ldots-c_{k}x^{n_{k}}=x^{\alpha}g\left(x\right)$
and $g\left(0\right)\not=0$. Since $f$ is $\mathcal{C}^{\infty}$
at zero, it must be $\alpha\in\mathbb{N}$.\smallskip{}

\noindent \emph{Binomial formula}. Let $\alpha\in\mathbb{K}$. Then
$\left(1+y\right)^{\alpha}\in\mathcal{A}_{0,1}$. In fact, suppose
first that $\alpha>0$. Notice that $\left(1+y\right)^{\alpha}$ is
an analytic germ and its Taylor expansion is $\sum_{n\in\mathbb{N}}\binom{\alpha}{n}y^{n}$;
let $g\left(x,y\right)=y^{\alpha}\in\mathcal{A}_{2}$. By composing
a ramification with a suitable blow-up chart, we obtain $g_{1}\left(x,y\right)=x^{\alpha}\left(1+y\right)^{\alpha}\in\mathcal{A}_{2}$
and $\mathcal{\mathcal{T}}\left(g_{1}\right)\left(X,Y\right)=X^{\alpha}\left(1+Y\right)^{\alpha}$.
By monomial division, there exists $h\in\mathcal{A}_{1}$ such that
$g_{1}\left(x,y\right)=x^{\alpha}h\left(y\right)$ and $\mathcal{\mathcal{T}}\left(h\right)\left(Y\right)=\sum_{n\in\mathbb{N}}\binom{\alpha}{n}Y^{n}$.
Hence, $h\left(y\right)=\left(1+y\right)^{\alpha}\in\mathcal{A}_{0,1}$,
by the previous remark. Now notice that $\left(1+y\right)^{-\alpha}-1$
is the solution of the implicit function problem $\left(1+y\right)^{\alpha}\left(1+z\right)-1=0$,
and hence $\left(1+y\right)^{-\alpha}\in\mathcal{A}_{0,1}$.\smallskip{}

\noindent \emph{Units.} A \emph{unit} of $\mathcal{A}_{m}$ is an
invertible element or, equivalently, a germ $U\in\mathcal{A}_{m}$
such that $U\left(0\right)\not=0$. We claim that, for all $\alpha\in\mathbb{K}$,
$U\left(x\right)^{\alpha}\in\mathcal{A}_{m}$. In fact, we may suppose
$U\left(0\right)=1$ and write $U\left(x\right)=1+\varepsilon\left(x\right)$,
where $\varepsilon\in\mathcal{A}_{m}$ and $\varepsilon\left(0\right)=0$.
By composition and the previous remark, $\left(1+\varepsilon\left(x\right)\right)^{\alpha}\in\mathcal{A}_{m}$.\end{rems}
\begin{defn}
\label{def: normal germ}Let $f\in\mathcal{A}_{m}$. We say that $f$
is \emph{normal} if there exist $\alpha\in\mathbb{A}^{m}$ and $u\in\mathcal{A}_{m}$
such that $f\left(x\right)=x^{\alpha}u\left(x\right)$ and $u\left(0\right)\not=0$. \end{defn}
\begin{lem}
\label{lem:composition with monomials}Let $f\in\mathcal{A}_{N}$
and $G=\left(g_{1},\ldots,g_{N}\right)\in\mathcal{A}_{M}^{N}$, and
suppose that the $g_{i}$ are all normal. Then $f\circ G\in\mathcal{A}_{M}$.\end{lem}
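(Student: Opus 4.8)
The plan is to split each normal component $g_{i}=x^{\alpha^{(i)}}u_{i}$ (with $u_{i}(0)\ne0$) into its monomial part and its unit part and to handle them by two different devices: the monomials $x^{\alpha^{(i)}}$ get fed into the $x$-variables of $f$ by means of blow-up charts and ramifications (\ref{emp:properties of the morph}(1),(7)), while the non-constant part of each $u_{i}$ gets fed in by composition in the $y$-variables (\ref{emp:properties of the morph}(5)), after an application of $\mathcal{A}$-analyticity has recentred $f$ so that the corresponding slots have become genuine $y$-variables. The one tool not directly on our list, but the one that makes these two devices compatible, is the following \emph{diagonalisation}: if $Q\in\mathcal{A}_{m,n}$ ($m\ge2$) and $x_{i},x_{j}$ are two of its $x$-variables, then the germ obtained from $Q$ by equating $x_{i}$ and $x_{j}$ lies in $\mathcal{A}_{m-1,n}$. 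Indeed, after a permutation of the $x$-variables take $(i,j)=(2,1)$; by \ref{emp:properties of the morph}(7) the composition of $Q$ with the blow-up chart $\pi_{2,1}^{1}$ lies in $\mathcal{A}_{m-1,n+1}$ and, in suitable coordinates, equals $Q(x_{1},x_{1}(1+w),x_{2},\dots,x_{m-1},y)$, where $w$ is the new $y$-variable; setting $w=0$ — legitimate by \ref{emp:properties of the morph}(5) applied with the zero germ in the slot of $w$ and the coordinate germs in the others — gives $Q(x_{1},x_{1},x_{2},\dots,x_{m-1},y)\in\mathcal{A}_{m-1,n}$. Iterating this and combining it with \ref{emp:properties of the morph}(5) yields a ``merged-variables'' form of composition in the $y$-variables: if $P\in\mathcal{A}_{M,N}$ and $\phi_{1},\dots,\phi_{N}\in\mathcal{A}_{M}$ all vanish at $0$, then $P(x;\phi_{1}(x),\dots,\phi_{N}(x))\in\mathcal{A}_{M}$.

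\emph{Step 1: absorbing the monomials.} Starting from $f(z_{1},\dots,z_{N})\in\mathcal{A}_{N,0}$, I build $F(z_{1},\dots,z_{N},x_{1},\dots,x_{M}):=f\bigl(z_{1}x^{\alpha^{(1)}},\dots,z_{N}x^{\alpha^{(N)}}\bigr)\in\mathcal{A}_{N+M,0}$. For each $i$ attach to the $i$-th slot a private tuple $\xi^{(i)}=(\xi^{(i)}_{1},\dots,\xi^{(i)}_{M})$ of fresh $x$-variables, using the dummy-variable axiom of \ref{vuoto:conditions on functions} and successive charts $\pi^{0}$; this replaces $z_{i}$ by $z_{i}\xi^{(i)}_{1}\cdots\xi^{(i)}_{M}$ in the $i$-th slot and leaves the other slots untouched. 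A ramification (\ref{emp:properties of the morph}(1)) of each $\xi^{(i)}_{k}$ by the exponent $\alpha^{(i)}_{k}\in\mathbb{A}$ — still affecting only the $i$-th slot — turns this into $z_{i}(\xi^{(i)})^{\alpha^{(i)}}$, and finally the diagonalisation above, applied $(N-1)M$ times, identifies $\xi^{(1)}_{k}=\dots=\xi^{(N)}_{k}=:x_{k}$ for each $k$, producing $F$. (When $\mathbb{A}=\mathbb{N}$ no ramification is needed: the factor $x_{k}^{\alpha^{(i)}_{k}}$ is obtained by applying $\pi^{0}$ to the $i$-th slot $\alpha^{(i)}_{k}$ times.)

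\emph{Step 2: recentring and the units.} Put $c_{i}:=u_{i}(0)\ne0$ and $\varepsilon_{i}:=u_{i}-c_{i}\in\mathcal{A}_{M}$, so $\varepsilon_{i}(0)=0$. We may assume each $\alpha^{(i)}\ne0$ (the slots with $\alpha^{(i)}=0$ are peeled off beforehand by one application of $\mathcal{A}$-analyticity of $f$); then $F$ has a representative whose $z_{i}$-radii exceed the $c_{i}$ — they can be kept large by shrinking the $x_{k}$-radii — so that the point $a:=(c_{1},\dots,c_{N},0,\dots,0)$ lies in its domain, and the only vanishing coordinates of $a$ are its last $M$ ones. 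Hence $\mathcal{A}$-analyticity of $F$ at $a$ furnishes $g_{a}\in\mathcal{A}_{M,N}$ with $F(z,x)=g_{a}(x;\,z_{1}-c_{1},\dots,z_{N}-c_{N})$ near $a$. Substituting $z_{i}=u_{i}(x)$ (which tends to $c_{i}$ as $x\to0$) and recalling $g_{i}=x^{\alpha^{(i)}}u_{i}$, we obtain the equality of germs at $0$
\[
(f\circ G)(x)=f\bigl(g_{1}(x),\dots,g_{N}(x)\bigr)=g_{a}\bigl(x;\,\varepsilon_{1}(x),\dots,\varepsilon_{N}(x)\bigr),
\]
whose right-hand side lies in $\mathcal{A}_{M}$ by the merged-variables composition of the first paragraph (with $P=g_{a}$ and $\phi_{i}=\varepsilon_{i}$). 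This proves the lemma.

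\emph{Where the difficulty lies.} Property \ref{emp:properties of the morph}(5) allows composition in the $y$-variables only when the inner germs carry their own, disjoint, $x$-variables, whereas here the germs $\varepsilon_{i}$ to be substituted into $g_{a}$ live in the same variables $x$ as $g_{a}$, and in Step 1 the private tuples $\xi^{(i)}$ must eventually be made to coincide; both are instances of identification of $x$-variables, which is not among the primitive operations. The key point is the observation that such an identification is nonetheless realised by composing with a chart $\pi^{\lambda}_{i,j}$ and then setting the new $y$-variable to zero. The secondary point requiring care is that $\mathcal{A}$-analyticity must be invoked at a point where the placeholder coordinates $z_{i}$ take the \emph{nonzero} values $u_{i}(0)$ — which is precisely the reason for introducing the placeholders $z_{i}$ — so that those coordinates turn into $y$-variables into which the perturbations $\varepsilon_{i}$ can then be composed; the elementary but tedious control of the radii of definition throughout is left implicit.
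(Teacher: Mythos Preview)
Your diagonalisation device and the merged-variables form of Axiom~5 are correct and make explicit something the paper glosses over; Step~1 is fine.

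The gap is in Step~2. You invoke $\mathcal{A}$-analyticity of $F$ at the point $a=(c,0)$, which presupposes that the germ $F\in\mathcal{A}_{N+M}$ has a representative in some $\mathcal{A}_{N+M,0,r}$ with $c_i<r_i$. But every operation you used to build $F$ (the charts $\pi^0$, the ramifications, and your diagonalisations) is axiomatised only at the germ level: Axiom~7 asserts $f\circ\pi\in\mathcal{A}_{m',n'}$, not $f\circ\pi\in\mathcal{A}_{m',n',r'}$ for any particular $r'$. Your sentence ``they can be kept large by shrinking the $x_k$-radii'' describes the natural domain of the explicit function $(z,x)\mapsto f(z_1x^{\alpha^{(1)}},\dots)$, but does not establish that this function belongs to the collection $\mathcal{A}$ on that domain. (Contrast with your peel-off step: there $\mathcal{A}$-analyticity is applied to the \emph{original} $f$ at a point that lies in the domain of a given representative of $f$, so the argument is valid.)

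The paper avoids this difficulty by never leaving the origin. In the case $\alpha>0$ it turns the placeholder into a $y$-variable \emph{from the outset} via a chart $\pi^{\lambda}$ with $\lambda\neq 0$: after reaching $h_1(x,y)=f(x^{\alpha}y^{\beta})$, one adds a dummy $z$, forms $H(x,y,z)=h_1(x,z)$, applies the chart $z\mapsto y(1+z')$ (making $z'$ a $y$-variable), and substitutes $z'=U^{1/\beta}-1$ by Axiom~5. All of this is purely germ-level. You can repair your argument in the same spirit: in Step~1, after the $\pi^{0}$ charts have produced $z_i\prod_k\xi^{(i)}_k$ in the $i$-th slot, choose $k_i$ with $\alpha^{(i)}_{k_i}>0$ and apply the chart $\pi^{c_i}$ to the pair $(z_i,\xi^{(i)}_{k_i})$, i.e.\ $z_i\mapsto\xi^{(i)}_{k_i}(c_i+w_i)$; this makes $w_i$ a $y$-variable and the $i$-th slot becomes $(c_i+w_i)(\xi^{(i)}_{k_i})^{2}\prod_{k\neq k_i}\xi^{(i)}_k$. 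Now ramify $\xi^{(i)}_{k_i}$ by $\alpha^{(i)}_{k_i}/2$ (and the remaining $\xi^{(i)}_k$ by $\alpha^{(i)}_k$), diagonalise to $x_k$, and you land directly on $G(w,x)=f\bigl((c_1+w_1)x^{\alpha^{(1)}},\dots\bigr)\in\mathcal{A}_{M,N}$; the substitution $w_i=\varepsilon_i(x)$ via your merged composition then finishes the proof with no appeal to $\mathcal{A}$-analyticity at an off-origin point.
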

\begin{proof}
We will only treat the case $N=1,\ M=2$, the proof in the general
case being a straightforward generalisation. Let $g\left(x,y\right)=x^{\alpha}y^{\beta}U\left(x,y\right)$,
where $U\in\mathcal{A}_{2}$ is a unit. 

First, suppose that $\alpha=\beta=0$. Up to homothety, we may assume
$U\left(0,0\right)=1$ and write $U\left(x,y\right)=1+\varepsilon\left(x,y\right)$,
with $ $$\varepsilon\in\mathcal{A}_{2}$ and $\varepsilon\left(0,0\right)=0$.
By $\mathcal{A}$-analyticity of $f$ (see Definition \ref{def: A-analytic}),
there exists $\varphi\in\mathcal{A}_{0,1}$ such that $\varphi\left(x\right)=f\left(x+1\right)$.
Since $\mathcal{T}\left(\varphi\right)$ has only integer exponents,
we may apply Axiom 5 of \ref{emp:properties of the morph} and deduce
that the germ of $x\mapsto\varphi\left(\varepsilon\left(x,y\right)\right)\in\mathcal{A}_{2}$. 

Now suppose that $\alpha>0$. Let $F\left(u,x\right)=f\left(u\right)\in\mathcal{A}_{2}$.
By Axioms 1 and 7 of \ref{emp:properties of the morph}, $F\left(x^{\alpha}\left(u+1\right),x\right)$
belongs to $\mathcal{A}_{2}$. By Axiom 4 of \ref{emp:properties of the morph},
the germ $f_{1}:x\mapsto F\left(x^{\alpha},x\right)\in\mathcal{A}_{1}$.
This shows that for every germ $\tilde{f}\in\mathcal{A}_{n+1}$, we
have $\tilde{f}\left(x^{\alpha},y_{1},\ldots,y_{n}\right)\in\mathcal{A}_{n+1}$.
Let $\beta>0$ and $F_{1}\left(x,y\right)=f_{1}\left(x\right)\in\mathcal{A}_{2}$.
By Axioms 1 and 7 of \ref{emp:properties of the morph}, the germ
of $h:\left(x,y\right)\mapsto F_{1}\left(y^{\beta}x,x\right)\in\mathcal{A}_{2}$.
Now, by what we have just proved, the germ $h_{1}:\left(x,y\right)\mapsto h\left(x,y^{1/\alpha}\right)\in\mathcal{A}_{2}$.
Notice that $h_{1}\left(x,y\right)=f\left(x^{\alpha}y^{\beta}\right)$.
Recall that, by Remarks \ref{rems: properties of the algebras}, $U^{1/\beta}\in\mathcal{A}_{2}$.
We may assume $U^{1/\beta}\left(0,0\right)=1$ and write $U^{1/\beta}\left(x,y\right)=1+\varepsilon\left(x,y\right)$,
with $ $$\varepsilon\in\mathcal{A}_{2}$ and $\varepsilon\left(0,0\right)=0$.
Let $H\left(x,y,z\right)=h_{1}\left(x,z\right)\in\mathcal{A}_{3}$.
By Axiom 7 of \ref{emp:properties of the morph}, the germ $H_{1}:\left(x,y,z\right)\mapsto H\left(x,y,y\left(1+z\right)\right)\in\mathcal{A}_{2,1}$.
By Axiom 5 of \ref{rems: properties of the algebras}, the germ $h_{2}:\left(x,y\right)\mapsto H_{1}\left(x,y,\varepsilon\left(x,y\right)\right)\in\mathcal{A}_{2}$.
Since $h_{2}\left(x,y\right)=f\left(x^{\alpha}y^{\beta}U\left(x,y\right)\right)$,
we are done.
\end{proof}

\subsection{The theorems\label{sub:The-theorems}}
\begin{proviso}
\label{empty: proviso} For the rest of the paper $\mathcal{A}=\{\mathcal{A}_{m,n,r}\colon m,n\in\mathbb{N},\ r\in\zeroapinf^{m+n}\}$
will be a collection of algebras of functions as in the previous subsection,
i.e. such that:
\begin{itemize}
\item $\mathcal{A}$ satisfies the conditions in \ref{vuoto:conditions on functions}.
\item Every function in the collection in $\mathcal{A}$-analytic (see Definition
\ref{def: A-analytic}).
\item $\left\{ \mathcal{A}_{m,n}:\ m,n\in\mathbb{N}\right\} $ is a collection
of quasianalytic algebras of germs (see Definition \ref{def: quasi-analyticity}).
\item The collection $\left\{ \mathcal{A}_{m,n}:\ m,n\in\mathbb{N}\right\} $
and the morphism $\mathcal{T}$ satisfy the closure and compatibility
properties in \ref{emp:properties of the morph}.
\end{itemize}
\end{proviso}
\begin{defn}
\label{def:structures}Let $\mathcal{A}$ be as above.

For $f\in\mathcal{A}_{m,n,1}$ we define $\tilde{f}:\mathbb{R}^{m+n}\to\mathbb{R}$
as 
\[
\tilde{f}\left(x,y\right)=\begin{cases}
f\left(x,y\right) & \mathrm{if}\ \left(x,y\right)\in I_{m,n,1}\\
0 & \mathrm{otherwise}
\end{cases}.
\]

Let $\mathcal{L}_{\mathcal{A}}$ be the language of ordered rings
$\left\{ <,0,1,+,-,\cdot\right\} $ augmented by a new function symbol
for each function $\tilde{f}$. Let $\mathbb{R}_{\mathcal{A}}$ be
the real ordered field with its natural $\mathcal{L}_{\mathcal{A}}$-structure.
Let $^{-1}$ be a function symbol for $x\mapsto\frac{1}{x}$, where
$0^{-1}=0$ by convention, and, for $n\in\mathbb{N}$, let $\sqrt[n]{\ }$
be a function symbol for the function 

\[
x\mapsto\begin{cases}
x^{1/n} & \text{if\ }0\leq x\leq1\\
0 & \text{otherwise}
\end{cases}.
\]
\end{defn}
\begin{namedthm}
{Theorem A}\label{thm:o-minimality}The structure $\mathbb{R}_{\mathcal{A}}$
is model-complete, o-minimal and polynomially bounded. Its field of
exponents%
\footnote{Recall that the field of exponents of a polynomially bounded o-minimal
structure is the set of all $\alpha\in\mathbb{R}$ such the function
$x\mapsto x^{\alpha}$ is definable. It is indeed a field (see for
example \cite{miller:growth_dichotomy}).%
} is the field $\mathbb{K}$, defined in Definition \ref{def: admissible exponents}.
\end{namedthm}

\begin{namedthm}
{Theorem B}\label{thm:qe}The (natural) expansion of the structure
$\mathbb{R}_{\mathcal{A}}$ to the language $\mathcal{L}_{\mathcal{A}}\cup\left\{ ^{-1}\right\} \cup\left\{ \sqrt[n]{\ }:\ n\in\mathbb{N}\right\} $
admits quantifier elimination.\end{namedthm}
\begin{rem}
\label{rem: no need for homothety}The choice of putting in $\mathcal{L}_{\mathcal{A}}$
a function symbol only for representatives on the unit box is not
binding; we could have put a function symbol for every function in
$\mathcal{A}_{m,n,r}$ and dispensed with the last condition in \ref{vuoto:conditions on functions},
and the two above theorems would still be true. 
\end{rem}

\subsection{Examples\label{sub:Examples}}

Most known examples of o-minimal polynomially bounded expansions of
the real field can be generated by a family $\mathcal{A}$ of algebras
satisfying the requirements of Proviso \ref{empty: proviso}. In particular,
this is true for all the structures mentioned in the introduction. 

a) In \cite{vdd:speiss:gen} the authors consider, for every polyradius
$r$, the sub-algebra $\mathbb{R}\left\{ X^{*},Y\right\} _{r}$ of
$\mathbb{R}\left\llbracket X^{*},Y\right\rrbracket $ consisting of
all generalised power series which converge on $\hat{I}_{m,n,r}$
(see \cite[p. 4377]{vdd:speiss:gen}). The morphism $\mathcal{T}$
in this case is the inclusion $\mathbb{R}\left\{ X^{*},Y\right\} _{r}$$\hookrightarrow\mathbb{R}\left\llbracket X^{*},Y\right\rrbracket $,
so the quasianalyticity property is obvious, and $\mathcal{A}$-analyticity
is proved in \cite[Corollary 6.7]{vdd:speiss:gen}. 

b) In \cite{vdd:speiss:multisum} the authors consider a family of
algebras $\mathcal{G}_{m}$ of Gevrey functions in $m$ variables
(see \cite[Definition 2.20]{vdd:speiss:multisum}). The morphism $\mathcal{T}$
is the Taylor map at zero, the quasianalyticity property follows from
a fundamental result in multisummabilty theory (see \cite[Proposition 2.18]{vdd:speiss:multisum})
and $\mathcal{A}$-analyticity is proved in \cite[Lemma 4.8]{vdd:speiss:multisum}. 

c) In \cite{rsw} the authors consider a family of quasianalytic Denjoy-Carleman
algebras $\mathcal{C}_{B}\left(M\right)$, where $B$ is a box in
$\mathbb{R}^{n}$ and $M=\left(M_{1},M_{2},\ldots\right)$ is an increasing
sequence of positive constants (see \cite[p.751]{rsw}). The morphism
$\mathcal{T}$ is the Taylor map at zero, the quasianalyticity property
is equivalent to the condition $\sum_{i=0}^{\infty}\frac{M_{i}}{M_{i+1}}=\infty$
and $\mathcal{A}$-analyticity is automatically verified, since these
algebras are closed under translation. 

d) In \cite{rss} the authors consider a solution $H=\left(H_{1},\ldots,H_{r}\right):\left(-\varepsilon,\varepsilon\right)\to\mathbb{R}^{r}$
of a first order analytic differential equation which is singular
at the origin and they construct the smallest collection $\mathcal{A}_{H}$
of algebras of germs containing the germ of $H$ and all the analytic
germs, and closed under (a subset of) the operations implicit in \ref{emp:properties of the morph}.
They consider the family of algebras of functions consisting of all
$\mathcal{A}$-analytic representatives of the germs in $\mathcal{A}_{H}$
(it is proved in \cite[Lemma 3.4]{rss} that every germ in $\mathcal{A}_{H}$
has an $\mathcal{A}$-analytic representative) and let the morphism
$\mathcal{T}$ be the Taylor expansion at zero. The quasianalyticity
property in proved in \cite[Theorem 3.5]{rss} and $\mathcal{A}$-analyticity
is automatic by construction. 

e) Finally, in \cite{krs} the authors consider a family of algebras
$\mathcal{Q}_{m}^{m+n}\left(U\right)$ of real functions which have
a holomorphic extension to a so-called ``quadratic domain'' $U\subseteq\mathbf{L}^{m+n}$,
where $\mathbf{L}$ is the Riemann surface of the logarithm (see \cite[Definition 5.1]{krs}).
These algebras contain the \emph{Dulac transition maps} of real analytic
planar vector fields in a neighbourhood of hyperbolic non-resonant
singular points. The morphism $\mathcal{T}$ defined in \cite[Definition 2.6]{krs}
associates to the germ $f$ of a function $\mathcal{Q}_{m}^{m+n}$
an \emph{asymptotic expansion} $\mathcal{T}\left(f\right)\in\mathbb{R}\left\llbracket X^{*},Y\right\rrbracket $.
The quasianalyticity property is proved in \cite[Proposition 2.8]{krs}
and $\mathcal{A}$-analyticity is proved in \cite[Corollary 3.7]{krs}.
For all the above examples the closure and compatibility properties
in \ref{emp:properties of the morph} are also proven in the mentioned
papers.
\begin{rem}
\label{rem: how to construct examples}Let $f:\hat{I}_{m_{0},n_{0},r_{0}}\to\mathbb{R}$
be a \emph{weak}-$\mathcal{C}^{\infty}$ function (in the sense of
\cite{miller:infinite-differentiability}), i.e. for every point $a\in\hat{I}_{m_{0},n_{0},r_{0}}$
and for every $k\in\mathbb{N}$, the germ of $f$ at $a$ has a $\mathcal{C}^{k}$
representative. Suppose furthermore that $f$ is definable in a polynomially
bounded expansion of the real field. Let $\left\{ \mathcal{A}_{m,n}:\ m,n\in\mathbb{N}\right\} $
be smallest collection of $\mathbb{R}$-algebras of germs containing
the germs at zero of the functions $f_{a}\left(x\right):=f\left(x+a\right)$
(for all $a\in\hat{I}_{m_{0},n_{0},r_{0}}$) and of the coordinate
functions, and satisfying the closure properties in \ref{emp:properties of the morph}.
Let $\mathcal{A}=\left\{ \mathcal{A}_{m,n,r}\right\} $ be the collection
of the $\mathbb{R}$-algebras made of all $\mathcal{A}$-analytic
representatives of the germs in $\left\{ \mathcal{A}_{m,n}:\ m,n\in\mathbb{N}\right\} $.
It is easy to check (see for example \cite[Lemma 3.4]{rss}) that
every germ in $\mathcal{A}_{m,n}$ has an $\mathcal{A}$-analytic
representative. Clearly, $\mathcal{A}$ satisfies the first (see Remark
\ref{rem: no need for homothety}) and the last two conditions in
Proviso \ref{empty: proviso}. Finally, it follows from \cite[Corollary 2]{miller:infinite-differentiability}
that $\left\{ \mathcal{A}_{m,n}:\ m,n\in\mathbb{N}\right\} $ is a
collection of quasianalytic algebras. Hence, the structure $\mathbb{R}_{\mathcal{A}}$
satisfies Theorems A and B.
\end{rem}
In analogy with \cite{gabriel:expli} and \cite{rsw}, we deduce from
Theorem A the following explicit model-completeness result.
\begin{cor}
\label{cor: model compl from derivatives}Let $f:\hat{I}_{m_{0},n_{0},r_{0}}\to\mathbb{R}$
be a $\mathcal{C}^{\infty}$ function definable in some polynomially
bounded expansion of the real field. Let $\mathcal{L}_{f}$ be the
language of ordered rings $\left\{ <,0,1,+,-,\cdot\right\} $ augmented
by function symbols for $f$ and for each partial derivative of $f$
and by a constant symbol for every real number. Then the natural $\mathcal{L}_{f}$-expansion
$\mathbb{R}_{f}$ of the real field is model-complete.\end{cor}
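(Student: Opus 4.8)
The plan is to deduce Corollary \ref{cor: model compl from derivatives} from Theorem A and Remark \ref{rem: how to construct examples} by verifying that the language $\mathcal{L}_f$ is, up to the harmless presence of real constants and the restriction/full-domain dichotomy, a sublanguage of $\mathcal{L}_{\mathcal{A}}$ for a suitable choice of $\mathcal{A}$. First I would apply Remark \ref{rem: how to construct examples} directly: since $f$ is $\mathcal{C}^\infty$ it is in particular weak-$\mathcal{C}^\infty$, and it is definable in a polynomially bounded expansion by hypothesis, so the collection $\{\mathcal{A}_{m,n}\colon m,n\in\mathbb{N}\}$ generated by the germs of the translates $f_a(x)=f(x+a)$ and the coordinate functions, closed under the operations in \ref{emp:properties of the morph}, is a collection of quasianalytic algebras, and the associated $\mathcal{A}$ of $\mathcal{A}$-analytic representatives satisfies Proviso \ref{empty: proviso}. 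Consequently $\mathbb{R}_{\mathcal{A}}$ is model-complete by Theorem A.

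Next I would argue that $\mathbb{R}_f$ and $\mathbb{R}_{\mathcal{A}}$ are interdefinable (have the same definable sets). One inclusion is immediate: each generator of $\mathcal{A}$ is built from translates of $f$ by the operations of \ref{emp:properties of the morph} — ramifications, monomial division, compositions in the $y$-variables, implicit functions, blow-ups — each of which is an operation definable in any o-minimal expansion of the real field containing $f$ together with its partial derivatives (the implicit-function and monomial-division clauses produce functions that are semialgebraically/definably characterised once one knows $f$ and its derivatives, and the real constants appearing as polyradii and blow-up centres $\lambda$ are available as constant symbols in $\mathcal{L}_f$). Hence every $\tilde g$ for $g\in\mathcal{A}_{m,n,1}$, being a restriction to a box (a semialgebraic set) of a function definable from $f$ and its derivatives and constants, is definable in $\mathbb{R}_f$; so every $\mathcal{L}_{\mathcal{A}}$-definable set is $\mathbb{R}_f$-definable. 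For the converse, the partial derivatives of $f$ belong to the algebras $\mathcal{A}$ by the closure-under-differentiation remark in \ref{rems: properties of the algebras}, so $f$ itself and all its partial derivatives are definable in $\mathbb{R}_{\mathcal{A}}$ — here one uses that $f=f_0$ is (a translate of) a generator, and that a $\mathcal{C}^\infty$ function on a box is reconstructible from its restriction $\tilde f$ to the open box together with continuity, with the real constants giving access to arbitrary points of the domain. Thus $\mathbb{R}_f$ and $\mathbb{R}_{\mathcal{A}}$ define the same sets, and since model-completeness is a property of the class of definable sets (equivalently: of whether the existential fragment already generates all definable sets), model-completeness of $\mathbb{R}_{\mathcal{A}}$ transfers to $\mathbb{R}_f$.

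The main obstacle I expect is bookkeeping the bounded-versus-full-domain issue and the role of the constants. Theorem A is stated for functions given on the unit box (via the operator $\tilde{\ }$), whereas $\mathcal{L}_f$ carries a symbol for $f$ on its actual domain $\hat{I}_{m_0,n_0,r_0}$ and for every partial derivative on its domain, plus all real constants; one must check that rescaling by the homothety in the last clause of \ref{vuoto:conditions on functions} (or invoking Remark \ref{rem: no need for homothety}) together with the constant symbols genuinely lets one pass back and forth between the two presentations without enlarging the definable sets. A secondary subtlety is that model-completeness is not obviously preserved under arbitrary interdefinability of structures in different languages — but it is preserved when, as here, one shows that every $\mathcal{L}_f$-definable set is, uniformly and with the same quantifier complexity data, an $\mathcal{L}_{\mathcal{A}}$-definable set and vice versa, so that an existential $\mathcal{L}_{\mathcal{A}}$-definition of a complement can be rewritten as an existential $\mathcal{L}_f$-definition after replacing each $\tilde g$ by its $\mathcal{L}_f$-definition (which is itself existential, being built from $f$, its derivatives, and constants via the operations above). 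Making that rewriting explicit, and checking that the definitions of the generators are indeed existential in $\mathcal{L}_f$, is the technical heart of the argument; everything else is an appeal to Theorem A and to the closure properties already recorded in \ref{emp:properties of the morph} and \ref{rems: properties of the algebras}.
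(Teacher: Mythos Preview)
Your proposal is correct and follows essentially the same approach as the paper: build $\mathcal{A}$ via Remark~\ref{rem: how to construct examples}, invoke Theorem~A for model-completeness of $\mathbb{R}_{\mathcal{A}}$, observe that $\mathbb{R}_{\mathcal{A}}$ and $\mathbb{R}_f$ have the same $0$-definable sets, and then check that the primitives of $\mathbb{R}_{\mathcal{A}}$ are existentially $\mathcal{L}_f$-definable. The paper's proof is terser; the only point it singles out as ``non-trivial'' is precisely the monomial-division step you flag, and it dispatches it with the explicit formula
\[
\Gamma(h)=\bigl\{(x,y):\ (x_1\neq 0\ \land\ x_1\cdot y=g(x))\ \lor\ (x_1=0\ \land\ y=\tfrac{\partial g}{\partial x_1}(x))\bigr\}
\]
when $g(x)=x_1\cdot h(x)$, which makes concrete why the partial-derivative symbols are needed in $\mathcal{L}_f$.
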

\begin{proof}
Let us construct $\mathcal{A}$ as in Remark \ref{rem: how to construct examples}.
It is easy to see that the structures $\mathbb{R}_{\mathcal{A}}$
and $\mathbb{R}_{f}$ have the same 0-definable sets. We claim that
the primitives of $\mathbb{R}_{\mathcal{A}}$ are existentially definable.
To see this, the only non-trivial observation to make is that, if
$g\in\mathcal{A}_{m,n,r}$, and $h\in\mathcal{A}_{m,n,r}$ is obtained
from $g$ by monomial division, e.g. $g\left(x\right)=x_{1}\cdot h\left(x\right)$
(where $x=\left(x_{1},\ldots,x_{m+n}\right)$), then the graph of
$h$ is the set

\[
\left\{ \left(x,y\right):\ \left(x_{1}\not=0\land x_{1}\cdot y=g\left(x\right)\right)\vee\left(x=0\land y=\frac{\partial g}{\partial x_{1}}\left(x\right)\right)\right\} ,
\]
which is existentially definable from the graph of $g$.
\end{proof}

\subsection{Model-completeness of Euler's Gamma function\label{subsec: model completeness of gamma}}

The following application was obtained in collaboration with Gareth
Jones.
\begin{cor}
\label{cor: mod compl of gamma}Let $\mathcal{L}_{\Gamma}$ be the
language of ordered rings $\left\{ <,0,1,+,-,\cdot\right\} $ augmented
by function symbols for $\Gamma\restriction\left(0,\infty\right)$
and for each derivative of $\Gamma\restriction\left(0,\infty\right)$
and by a constant symbol for every real number. Then the natural $\mathcal{L}_{\Gamma}$-expansion
$\mathbb{R}_{\Gamma}$ of the real field is model-complete.\end{cor}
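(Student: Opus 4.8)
The plan is to exhibit $\mathbb{R}_{\Gamma}$ as a reduct of the expansion by $\exp$ of a polynomially bounded, model-complete structure produced by Corollary \ref{cor: model compl from derivatives}, and then to transfer model-completeness along that reduct. Everything is driven by two classical identities: Legendre's duplication formula $\Gamma(z)\Gamma(z+\tfrac12)=2^{1-2z}\sqrt{\pi}\,\Gamma(2z)$, and Binet's formula $\log\Gamma(x)=(x-\tfrac12)\log x-x+\tfrac12\log(2\pi)+\mu(x)$ for $x>0$, where $\mu(x)=2\int_{0}^{\infty}\frac{\arctan(t/x)}{e^{2\pi t}-1}\,dt$ is the Binet function. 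First I would observe that $\exp$, and hence $\log=\exp^{-1}$, is existentially definable in $\mathbb{R}_{\Gamma}$: solving the duplication formula for $2^{1-2z}$ on $(0,\infty)$ (where $z$, $z+\tfrac12$ and $2z$ are all positive) yields the graph of $z\mapsto 2^{1-2z}$, hence that of $t\mapsto 2^{t}$ on $(-\infty,1)$; extending by $2^{t}=1/2^{-t}$ gives $t\mapsto 2^{t}$ on all of $\mathbb{R}$, and then $\exp$ is $s\mapsto 2^{s/\log 2}$, which uses only the real constants $\sqrt{\pi}$ and $1/\log 2$.

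Next I would pass to the reparametrised Binet function $\nu\colon[0,2)\to\mathbb{R}$, $\nu(u)=\mu(1/u)$ for $u>0$ and $\nu(0)=0$. On $(0,2)$ it is analytic, while at $0$ it is $\mathcal{C}^{\infty}$ with asymptotic expansion the divergent $1$-Gevrey Stirling series $\sum_{k\geq 1}\frac{B_{2k}}{2k(2k-1)}u^{2k-1}$; since $\mu$ is the $1$-sum of that series, $\nu$ is a genuinely $\mathcal{C}^{\infty}$ function definable in a polynomially bounded o-minimal structure -- one of the Gevrey structures of \cite{vdd:speiss:multisum} (item b) of Subsection \ref{sub:Examples}). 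By Corollary \ref{cor: model compl from derivatives}, the structure $\mathbb{R}_{\nu}$ obtained by adjoining $\nu$, all its derivatives, and all real constants is model-complete, o-minimal and polynomially bounded. Set $\mathcal{M}:=\langle\mathbb{R}_{\nu},\exp\rangle$. Since $\mu(x)=\nu(1/x)$ for $x>\tfrac12$, and since for $x\in(0,\tfrac12]$ the functional equation $\mu(x)=\mu(x+1)+(x+\tfrac12)\log(1+\tfrac1x)-1$ moves $x$ into $(1,\tfrac32]$ in a single step, the function $\mu\restriction(0,\infty)$ is $\mathcal{M}$-definable; hence so are $\log\Gamma\restriction(0,\infty)$, $\Gamma\restriction(0,\infty)=\exp(\log\Gamma\restriction(0,\infty))$, and -- by differentiating this last expression, which involves only $\exp$, $\log$, $\nu$ and the derivatives of $\nu$ -- all the derivatives of $\Gamma\restriction(0,\infty)$. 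Thus $\mathbb{R}_{\Gamma}$ is a reduct of $\mathcal{M}$.

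It remains to descend. First, $\mathcal{M}$ is model-complete: $\mathbb{R}_{\nu}$ is polynomially bounded, o-minimal and model-complete, so the model-completeness of its expansion by $\exp$ follows from the standard results on adjoining $\exp$ and $\log$ to model-complete polynomially bounded o-minimal structures (the analogue, in this setting, of the van den Dries--Macintyre--Marker theorem that $\mathbb{R}_{\mathrm{an},\exp}$ is model-complete). Second, every atomic $\mathcal{L}_{\mathcal{M}}$-relation, and its negation, is existentially $\mathcal{L}_{\Gamma}$-definable: for $\exp$ this was shown above; and differentiating $\mu(x)=\log\Gamma(x)-(x-\tfrac12)\log x+x-\tfrac12\log(2\pi)$ expresses $\nu$ and every $\nu^{(k)}$ through $\Gamma,\Gamma',\Gamma'',\dots$ -- all of which are function symbols of $\mathcal{L}_{\Gamma}$ by hypothesis -- together with $\log$, inversion and real constants, so all these graphs and their complements (being graphs of total functions) are existentially $\mathcal{L}_{\Gamma}$-definable. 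Consequently any $\mathbb{R}_{\Gamma}$-definable set, being $\mathcal{L}_{\mathcal{M}}$-definable and therefore, by model-completeness of $\mathcal{M}$, an existential $\mathcal{L}_{\mathcal{M}}$-set, becomes an existential $\mathcal{L}_{\Gamma}$-set once each $\mathcal{L}_{\mathcal{M}}$-atom and negated atom is replaced by its existential $\mathcal{L}_{\Gamma}$-definition; hence $\mathbb{R}_{\Gamma}$ is model-complete.

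The step I expect to be the real obstacle is not the model theory but the input needed to start the machine: (a) establishing that the Stirling series is $1$-summable, so that the reparametrised Binet function $\nu$ genuinely lies in one of the polynomially bounded generalised-quasianalytic structures to which Theorem A, and hence Corollary \ref{cor: model compl from derivatives}, applies; and (b) the model-completeness of $\mathcal{M}=\langle\mathbb{R}_{\nu},\exp\rangle$, which does not follow from Theorem A -- $\mathbb{R}_{\Gamma}$ is visibly not polynomially bounded -- and for which an external exponential model-completeness theorem must be invoked. The remaining ingredients -- the two Gamma identities, the one-step unwinding of the functional equation for $\mu$ near $0$, and the bookkeeping that $\mathcal{L}_{\Gamma}$ carries \emph{all} derivatives of $\Gamma$ and so recovers $\nu$ together with all of its derivatives -- are routine.
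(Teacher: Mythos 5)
Your route is essentially the paper's: encode $\log\Gamma$ via Binet's second formula, observe that the Binet remainder is a Gevrey function definable in a polynomially bounded structure, obtain a model-complete polynomially bounded expansion from Corollary \ref{cor: model compl from derivatives}, invoke an external theorem to expand by $\exp$, and descend to $\mathbb{R}_{\Gamma}$ via Legendre's duplication formula. Your $\nu$ is the paper's $\psi$ in another normalisation, and your one-step unwinding of the functional equation for $\mu$ near the origin plays the role that the paper's direct inclusion of $\Gamma\restriction(0,1)$ in the base plays; both work, and your final existential-descent bookkeeping matches the paper's sketch.

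The one substantive gap is in the constitution of the polynomially bounded base. The paper's base $\mathcal{R}$ is the expansion by $\exp\restriction(0,1)$, $\psi\restriction(0,1)$, $\Gamma\restriction(0,1)$ and their derivatives (and constants), and it is to this $\mathcal{R}$ that \cite[Theorem B]{vdd:speiss:multisum} is applied to conclude that $\langle\mathcal{R},\exp\rangle$ is model-complete. The restricted exponential there is not incidental: the van den Dries--Macintyre--Marker and van den Dries--Speissegger machinery for adjoining the global exponential to a polynomially bounded structure requires $\exp$ restricted to a compact interval to be definable already in that base, and that is precisely the external theorem you flag as ``obstacle~(b)''. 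Your base $\mathbb{R}_{\nu}$, however, is the expansion by $\nu$ and its derivatives alone, and there is no reason for $\exp\restriction[0,1]$ (equivalently, a restricted $\log$) to be definable from these primitives: differentiating Binet's formula ties $\mu'$ to $\Gamma'/\Gamma$ and $\log$ only jointly, so $\mathbb{R}_{\nu}$ never sees $\log$ on its own. Thus the hypothesis of the theorem you invoke is not met, and ``$\mathcal{M}$ is model-complete'' does not follow as written. The repair is easy and is exactly what the paper does: apply Corollary \ref{cor: model compl from derivatives} to the finite family $\{\nu,\exp\restriction(0,1)\}$ rather than to $\nu$ alone, so that the restricted exponential sits in the polynomially bounded base; the rest of your argument then goes through unchanged.
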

\begin{proof}
Let $\psi$ be defined as in \cite[Example 8.1]{vdd:speiss:multisum}.
In particular, recall that, by Binet's second formula, we have, for
all $x\in\left(1,\infty\right)$,
\[
\log\Gamma\left(x\right)=\left(x-\frac{1}{2}\right)\log x+\frac{1}{2}\log\left(2\pi\right)+\psi\left(\frac{1}{x}\right).
\]
As remarked in \cite[Example 8.1]{vdd:speiss:multisum}, the functions
$\psi\restriction\left(0,1\right)$ and $\Gamma\restriction\left(0,1\right)$
are both definable in the polynomially bounded o-minimal structure
$\mathbb{R}_{\mathcal{G}}$. Moreover, $\psi\restriction\left(0,1\right)$
is $\mathcal{C}^{\infty}$ at zero and $1/\Gamma\restriction\left(0,1\right)$
is analytic at zero, hence by Corollary \ref{cor: model compl from derivatives},
the expansion $\mathcal{R}$ of the real field by the functions $\exp\restriction\left(0,1\right)$,
$\psi\restriction\left(0,1\right)$ and $\Gamma\restriction\left(0,1\right)$
and their derivatives is model-complete. By \cite[Theorem B]{vdd:speiss:multisum},
the structure $\langle\mathcal{R},\exp\rangle$ is model-complete.
Now, using Legendre's Duplication Formula (see for example \cite[p.5]{erdelyi:higher_transcendental_functions_i})
\[
\Gamma\left(x\right)\Gamma\left(x+\frac{1}{2}\right)=2^{1-2x}\sqrt{\pi}\Gamma\left(2x\right),
\]
it is easy to show that the structures $\langle\mathcal{R},\exp\rangle$
and $\mathbb{R}_{\Gamma}$ have the same 0-definable sets and that
the primitives of $\langle\mathcal{R},\exp\rangle$ are existentially
definable in $\mathbb{R}_{\Gamma}$. 
\end{proof}

\section{Monomialisation\label{sec:Monomialisation of series}}

The aim of this section is to define a class of transformations of
$\mathbb{R}^{m+n}$, which we call admissible, which are bijective
outside a set with empty interior, and modulo which we may assume
that the germs in $\mathcal{A}_{m,n}$ are normal (see Definition
\ref{def: normal germ}). More precisely, in Subsection \ref{sub:Monomialisation-of-generalised}
we develop a monomialisation algorithm for generalised power series
and in Subsection \ref{subsec:monomialisation of germs} we use quasianalyticity
and the compatibility properties of the morphism $\mathcal{T}$ in
\ref{emp:properties of the morph} to deduce a monomialisation result
for germs.

\subsection{Admissible transformations and admissible trees\label{sub:Admissible-trees}}
\begin{defn}
\label{Def: elementary transformations}Let $m,n\in\mathbb{N},\ \left(x,y\right)=\left(x_{1},\ldots,x_{m},y_{1},\ldots,y_{n}\right)$.
For $m',n'\in\mathbb{N}$ with $m'+n'=m+n$, we set $\left(x',y'\right)=\left(x_{1}',\ldots,x_{m'}',y_{1}',\ldots,y_{n'}'\right)$.
Let $r,r'$ be polyradii in $\mathbb{R}^{m+n}$. An \emph{elementary
transformations} of $\mathbb{R}^{m+n}$ is a map $\nu:\hat{I}_{m',n',r'}\to\hat{I}_{m,n,r}$
of either of the following types. 
\begin{itemize}
\item A \emph{blow-up chart} (see Definition \ref{def: blow-up charts}),
i.e. an element $\pi:\hat{I}_{m',n',r'}\to\hat{I}_{m,n,r}$ of either
of the collections $\pi_{i,j},\ \pi_{m+i,j},\ \pi_{m+i,m+j}$.
\item A \emph{Tschirnhausen translation}: let $m=m',n=n'$, let $r''=\left(s_{1}',\ldots,s_{m}',t_{1}',\ldots,t_{n-1}'\right)$
and $h\in\mathcal{A}_{m,n-1,r''}$ with $h\left(0\right)=0$ and set
\[
\tau_{h}\left(x',y'\right)=\left(x,y\right),\ \ \ \mathrm{where}\ \begin{cases}
x_{k}=x{}_{k}' & 1\leq k\leq m\\
y_{n}=y_{n}'+h\left(x_{1}',\ldots,x_{m}',y_{1}',\ldots,y_{n-1}'\right)\\
y_{k}=y_{k}' & 1\leq k\leq n-1
\end{cases}.
\]

\item A \emph{linear transformation}: let $m=m',n=n'$, let $1\leq i\leq n\mathrm{\ and\ }c=\left(c_{1},\ldots,c_{i-1}\right)\in\mathbb{R}^{i-1}$,
and set
\[
L_{i,c}\left(x',y'\right)=\left(x,y\right),\ \ \ \mathrm{where}\ \begin{cases}
x_{k}=x_{k}' & 1\leq k\leq m\\
y_{k}=y_{k}' & i\leq k\leq n\\
y_{k}=y_{k}'+c_{k}y_{i}' & 1\leq k<i
\end{cases}.
\]

\item A \emph{ramification} is either of the following maps: let $m=m',n=n'$, 
\end{itemize}

for $1\leq i\leq m$ and $\gamma\in\mathbb{K}^{>0}$ (see Definition
\ref{def: admissible exponents}),

\begin{align*}
r_{i}^{\gamma} & \left(x',y'\right)=\left(x,y\right),\ \ \ \mathrm{where}\ \begin{cases}
x_{k}=x_{k}' & 1\leq k\leq m,\ k\not=i\\
x_{i}=x{}_{i}'^{\gamma}\\
y_{k}=y_{k} & 1\leq k\leq n
\end{cases}
\end{align*}
and for $1\leq i\leq n$ and $d\in\mathbb{N}$,
\begin{align*}
\end{align*}
\[
r_{m+i}^{d,\pm}\left(x',y'\right)=\left(x,y\right),\ \ \ \mathrm{where}\ \begin{cases}
x_{k}=x_{k}' & 1\leq k\leq m\\
y_{i}=\pm y{}_{i}'^{d}\\
y_{k}=y_{k} & 1\leq k\leq n,\ k\not=i
\end{cases}.
\]

\end{defn}
\begin{rem}
\label{rem: elemntary transf send quadrants to sectors}Notice that,
by \ref{emp:properties of the morph} and Lemma \ref{lem:composition with monomials},
the components of an elementary transformation $\nu:\hat{I}_{m',n',r'}\to\hat{I}_{m,n,r}$
are elements of $\mathcal{A}_{m',n',r'}$. Moreover, it follows from
the axioms in \ref{emp:properties of the morph} that, if $h\in\mathcal{A}_{m,n}$,
then $h\circ\nu\in\mathcal{A}_{m',\nu'}$. \end{rem}
\begin{defn}
\label{Def: admiss transf}Let $m,n,m',n',N\in\mathbb{N}$ with $m'+n'=m+n$
and let $\nu_{i}:\hat{I}_{m_{i}',n_{i}',r_{i}'}\to\hat{I}_{m_{i},n_{i},r_{i}}$
be elementary transformations (for $i=1,\ldots,N)$ with $m_{1}=m,\ n_{1}=n,\ m_{N}'=m',\ n_{N}'=n'$
and $m_{i}+n_{i}=m_{i}'+n_{i}'=m+n$. If $N>1$, then in order for
the composition $\nu_{1}\circ\ldots\circ\nu_{N}$ to be well-defined
it is enough that $m_{i}\geq m_{i-1}',\ n_{i}\leq n_{i-1}'$ and $r_{i}\leq r_{i-1}'$
for all $i=1,\ldots,N$. A map $\rho:\hat{I}_{m',n',r'}\to\hat{I}_{m,n,r}$
is called an \emph{admissible transformation} if $\rho=\nu_{1}\circ\ldots\circ\nu_{N}$
and moreover, if $N>1$, then $m_{i}=m_{i-1}'$ and $n_{i}=n_{i-1}'$
for all $i=1,\ldots,N$. The number $N$ is called the \emph{length}
of the admissible transformation $\rho$.\end{defn}
\begin{rem}
\label{Rem: admiss transf are in the algebra}By Remark \ref{rem: elemntary transf send quadrants to sectors}
and by induction on the length of $\rho$, it is easy to see that
the components of the admissible transformation $\rho:\hat{I}_{m',n',r'}\to\hat{I}_{m,n,r}$
are elements of $\mathcal{A}_{m',n',r'}$ and that $\rho$ induces
an algebra morphism

\[
\xyC{0mm}\xyL{0mm}\xymatrix{\rho\colon & \mathcal{A}_{m,n}\ar[rrrr] & \  & \  & \  & \mathcal{A}_{m',n'}\\
 & f\ar@{|->}[rrrr] &  &  &  & f\circ\rho
}
.
\]

\end{rem}

\begin{lem}
\label{Lem: elementary transf induce injective morph}An elementary
transformation $\nu:\hat{I}_{m',n',r'}\to\hat{I}_{m,n,r}$ induces
an injective algebra homomorphism

\[
\xyC{0mm}\xyL{0mm}\xymatrix{T_{\nu}\colon & \mathbb{R}\left\llbracket X^{*},Y\right\rrbracket \ar[rrrr] & \  & \  & \  & \mathbb{R}\left\llbracket X'^{*},Y'\right\rrbracket \\
 & F\ar@{|->}[rrrr] &  &  &  & F\circ\nu
}
,
\]
where we set
\[
F\circ\tau_{h}\left(X',Y'\right):=F\left(X',Y_{1}',\ldots,Y_{n-1}',Y_{n}'+\mathcal{T}\left(h\right)\left(X',Y_{1}',\ldots Y_{n-1}'\right)\right).
\]
Moreover, if $F\in\mathbb{R}\left\llbracket X^{*},Y\right\rrbracket \cap\text{Im\ensuremath{\left(\mathcal{T}\right)}},$
then $F\circ\nu\in\mathbb{R}\left\llbracket X'^{*},Y'\right\rrbracket \cap\text{Im\ensuremath{\left(\mathcal{T}\right)}}.$\end{lem}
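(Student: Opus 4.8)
The plan is to prove the statement type by type, going through the finitely many kinds of elementary transformation in Definition~\ref{Def: elementary transformations}. For a given $\nu$, the series $F\circ\nu$ is obtained by substituting the components of $\nu$ for the variables of $F$: monomials, possibly carrying a binomial factor $(\lambda+Y_1')$, for a blow-up chart; monomials for a ramification; the linear forms $Y_k'+c_kY_i'$ for a linear transformation; and $Y_n'+\mathcal{T}(h)$ in place of $Y_n$ for a Tschirnhausen translation, as prescribed in the statement. Since the substitution of fixed elements for the variables is, wherever it is defined, automatically an $\mathbb{R}$-algebra homomorphism, once we know that $F\circ\nu$ is a well-defined element of $\mathbb{R}\llbracket {X'}^{*},Y'\rrbracket$ the homomorphism property needs no separate argument, and only injectivity and the assertion about $\mathrm{Im}(\mathcal{T})$ remain.

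Well-definedness means: the support of $F\circ\nu$ is good, it is contained in $[0,\infty)^{m'}\times\mathbb{N}^{n'}$, and the coefficient of each monomial of $F\circ\nu$ is a finite sum. This is the bookkeeping underlying the ring structure of $\mathbb{R}\llbracket X^{*},Y\rrbracket$ in \cite{vdd:speiss:gen}: one uses only that a finite union and a sumset of well-ordered subsets of $[0,\infty)$ is again well-ordered (Definition~\ref{def:good set} and \cite[Lemma 4.3]{vdd:speiss:gen}), and that each element of such a sumset has only finitely many representations as a sum of members of the given sets (otherwise one could extract an infinite strictly decreasing sequence in one factor). A blow-up chart or a ramification sends an $x$-type exponent $\alpha_j$ to $\alpha_j+\alpha_i$ or $\alpha_j+N_i$ --- the added term lying in the well-ordered set $\mathrm{Supp}_{X_i}(F)$, resp. $\mathrm{Supp}_{Y_i}(F)\subseteq\mathbb{N}$ --- or to $\gamma\alpha_j$; it fixes a $y$-type exponent, multiplies it by a positive integer, or routes it through a natural power into an $x$-type exponent; and a binomial factor $(\lambda+Y_1')^{\alpha}$ contributes only natural exponents in the new $y$-variable $Y_1'$. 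Hence the support stays good and inside $[0,\infty)^{m'}\times\mathbb{N}^{n'}$. The only case deserving a word is $\tau_h$: here $\mathcal{T}(h)$ has good support (being in $\mathrm{Im}(\mathcal{T})$) and vanishing constant term, since the constant term of $\mathcal{T}(h)$ is $h(0)=0$ by the Taylor-expansion property in Remarks~\ref{rems: properties of the algebras}; therefore $\mathcal{T}(h)$ has positive order, and substituting $Y_n'+\mathcal{T}(h)$ into a power $Y_n^{N_n}$ and summing over the $N_n$ that occur in $F$ produces only finitely many contributions to each monomial of $F\circ\tau_h$ and a support contained in $\mathrm{Supp}_{X}(F)+\bigl(\{0\}\cup\Sigma(\mathrm{Supp}_{X}(\mathcal{T}(h)))\bigr)$, which is good.

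Injectivity of $T_\nu$ breaks into three sub-cases. (i) The linear transformations $L_{i,c}$ and the Tschirnhausen translations $\tau_h$ are invertible, the inverse substitutions being of the same type: $L_{i,-c}$ and $\tau_{-h}$ (with $-h\in\mathcal{A}_{m,n-1}$ and $\mathcal{T}(-h)=-\mathcal{T}(h)$), and one checks directly that $T_{L_{i,-c}}\circ T_{L_{i,c}}=\mathrm{id}$ and $T_{\tau_{-h}}\circ T_{\tau_h}=\mathrm{id}$. (ii) The ramifications and the blow-up charts that replace a variable by a product of (old and new) variables --- namely $\pi_{i,j}^{0},\pi_{i,j}^{\infty},\pi_{m+i,j}^{0},\pi_{m+i,j}^{\pm\infty},\pi_{m+i,m+j}^{0},\pi_{m+i,m+j}^{\infty},r_i^{\gamma},r_{m+i}^{d,\pm}$ --- induce an \emph{injective} map on exponent tuples (for instance $(\dots,\alpha_i,\dots,\alpha_j,\dots)\mapsto(\dots,\alpha_i,\dots,\alpha_i+\alpha_j,\dots)$ for $\pi_{i,j}^{0}$ and $\alpha_i\mapsto\gamma\alpha_i$ for $r_i^{\gamma}$) and multiply each monomial by a nonzero scalar; hence distinct monomials of $F$ go to distinct monomials of $F\circ\nu$, no cancellation occurs, and $F\circ\nu=0$ forces $F=0$. (iii) For $\pi_{i,j}^{\lambda},\pi_{m+i,j}^{\lambda},\pi_{m+i,m+j}^{\lambda}$ with $\lambda\neq0$ a variable is replaced by $\xi(\lambda+\eta)$ with $\xi,\eta$ new variables; grouping the terms of $F\circ\nu$ according to the monomial in the variables left untouched, the vanishing of $F\circ\nu$ becomes a system of equations $\sum_{s=1}^{p}c_s\lambda^{a_s}\binom{a_s}{l}=0$, $l\in\mathbb{N}$, where $a_1<\dots<a_p$ are the finitely many admissible values of the substituted exponent and the $c_s$ are corresponding coefficients of $F$; since $\sum_{l\in\mathbb{N}}\binom{a_s}{l}t^l=(1+t)^{a_s}$ and the series $(1+t)^{a_1},\dots,(1+t)^{a_p}$ are linearly independent for pairwise distinct $a_s$ (they are $e^{a_s\log(1+t)}$, i.e. distinct exponential functions), all the $c_s$ vanish, i.e. $F=0$.

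Finally, if $F=\mathcal{T}(g)$ for some $g\in\mathcal{A}_{m,n}$, set $h:=g\circ\nu$; by Remark~\ref{rem: elemntary transf send quadrants to sectors} we have $h\in\mathcal{A}_{m',n'}$, and it remains to identify $\mathcal{T}(h)$ with $F\circ\nu$, i.e. to check that the formal substitution $\mathcal{T}(g)\circ\nu$ equals $\mathcal{T}$ applied to the geometric composition $g\circ\nu$. This is Axiom~7 of~\ref{emp:properties of the morph} when $\nu$ is a blow-up chart, Axiom~1 when $\nu$ is an $x$-ramification, and Axiom~5 (composition in the $y$-variables) in the remaining cases --- the substituted maps, all vanishing at the origin, being coordinate functions, $\pm y_i^{d}$, $y_k'+c_ky_i'$, or $y_n'+h$ --- whence $F\circ\nu=\mathcal{T}(h)\in\mathrm{Im}(\mathcal{T})$. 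I expect the only genuinely non-routine step to be sub-case (iii) of the injectivity argument --- the blow-up charts involving a nontrivial shift $\lambda\neq0$, handled via linear independence of the binomial series --- while all the rest is standard manipulation of generalised power series together with direct appeals to the axioms in~\ref{emp:properties of the morph}.
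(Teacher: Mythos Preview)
Your proof is correct and follows essentially the same route as the paper: both dispose of the homomorphism property and the $\mathrm{Im}(\mathcal{T})$-preservation by direct appeal to the axioms in~\ref{emp:properties of the morph}, treat the invertible elementary transformations by exhibiting the inverse, and reduce the blow-up case to the vanishing of a finite linear combination $\sum_s c_s\lambda^{a_s}\binom{a_s}{l}=0$, concluded via the linear independence of the series $(1+t)^{a_s}$ (your phrasing) or, equivalently, the existence of a nonsingular finite subsystem (the paper's phrasing). Your write-up is in fact a bit more careful than the paper's on well-definedness and on separating the monomial blow-up charts $\pi^{0},\pi^{\pm\infty}$ from the shifted ones $\pi^{\lambda}$, but the substance is the same.
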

\begin{proof}
It is clear that $T_{\nu}$ is a homomorphism, which preserves being
a member of $\text{Im}\left(\mathcal{T}\right)$ by the properties
in \ref{emp:properties of the morph}. If $\nu$ is either a linear
transformation, or a Tschirnhausen translation or a ramification,
then $\nu:\left(X',Y'\right)\mapsto\left(X,Y\right)$ is a bijective
change of coordinates, hence $T_{\nu}$ is injective. It remains to
show injectivity when $\nu$ is a blow-up chart. In order to do this,
let $u,v,w_{1},\ldots,w_{l}$ be variables, with $\bar{w}=\left(w_{1},\ldots,w_{l}\right)$,
and, for $\lambda\geq0$, consider the map $\pi^{\lambda}:\left(u,v,\bar{w}\right)\mapsto\left(u,u\left(\lambda+v\right),\bar{w}\right)$.
For $F\in\mathbb{R}\left\llbracket \left(u,v,\bar{w}\right)^{*}\right\rrbracket \setminus\left\{ 0\right\} $,
we prove that $F\circ\pi^{\lambda}\not\equiv0$. Write $F\left(u,v,\bar{w}\right)=\sum_{\alpha,\beta}a_{\alpha,\beta}\left(\bar{w}\right)u^{\alpha}v^{\beta}$,
where $a_{\alpha,\beta}\in\mathbb{R}\left\llbracket \bar{w}^{*}\right\rrbracket $.
Let us regroup the homogeneous terms as follows:

\[
F\left(u,v,\bar{w}\right)=\sum_{\gamma}Q_{\gamma}\left(u,v,\bar{w}\right)\ \text{where}\ Q_{\gamma}\left(u,v,\bar{w}\right)=\sum_{\alpha+\beta=\gamma}a_{\alpha,\beta}\left(\bar{w}\right)u^{\alpha}v^{\beta}.
\]
It follows from the well-order properties of the support (see for
example \cite[Lemma 4.2 (2)]{vdd:speiss:gen}) that $Q_{\gamma}$
is a finite sum, hence let us rewrite $Q_{\gamma}={\displaystyle \sum_{i=1}^{q}}c_{\beta_{i}}u^{\gamma-\beta_{i}}v^{\beta_{i}}$,
where $c_{\beta_{i}}=a_{\gamma-\beta_{i},\beta_{i}}\left(\bar{w}\right)$
and $\beta_{1}<\ldots<\beta_{q}$.

Let us first consider the case $\lambda=0$. Suppose that $0\equiv F\circ\pi^{0}\left(u,v,\bar{w}\right)={\displaystyle \sum_{\gamma}}u^{\gamma}{\displaystyle \sum_{\alpha+\beta=\gamma}}a_{\alpha,\beta}\left(\bar{w}\right)v^{\beta}$.
Then for every $\gamma$ we have that $a_{\alpha,\beta}\equiv0$ whenever
$\alpha+\beta=\gamma$, hence $F\equiv0$. 

Now suppose $\lambda>0$ and assume that 

\[
0\equiv F\circ\pi^{\lambda}\left(u,v,\bar{w}\right)={\displaystyle \sum_{\gamma}}u^{\gamma}{\displaystyle \sum_{\alpha+\beta=\gamma}}a_{\alpha,\beta}\left(\bar{w}\right){\displaystyle \sum_{k=0}^{\infty}}\binom{\beta}{k}\lambda^{\beta-k}v^{k}.
\]
 Then for every $\gamma$ the series 

\[
Q_{\gamma}\left(1,\lambda+v,\bar{w}\right)={\displaystyle \sum_{k=0}^{\infty}}\left({\displaystyle \sum_{i=1}^{q}}c_{\beta_{i}}\binom{\beta_{i}}{k}\lambda^{\beta_{i}-k}\right)v^{k}\in\mathbb{R}\left\llbracket \bar{w}^{*},v\right\rrbracket 
\]
is identically zero. It follows that ${\displaystyle \sum_{i=1}^{q}}c_{\beta_{i}}\lambda^{\beta_{i}}\binom{\beta_{i}}{k}=0\ \forall k\in\mathbb{N}$.
Now, it is not difficult to see that there exist $j_{1},\ldots,j_{q}\in\mathbb{N}$
such that the determinant of the linear system $\left\{ \sum_{i=1}^{q}\binom{\beta_{i}}{j_{s}}Z_{i}=0\ \text{for}\ s=1,\ldots q\right\} $
is nonzero and hence the only solution is $Z_{1}=\ldots=Z_{q}=0$.
It follows that for every $\gamma$ we have that $a_{\alpha,\beta}\equiv0$
whenever $\alpha+\beta=\gamma$, hence $F\equiv0$. \end{proof}
\begin{defn}
\label{Def: admissible trees}An \emph{elementary tree} has either
of the following forms: a vertex $\bullet$, or
\[
\xyL{2cm}\xymatrix{\bullet\ar@{->}[d]\sb-{L_{i,c}}\\
\ 
}
\]
where $L_{i,c}$ is a linear transformation, or
\[
\xyL{2cm}\xymatrix{\bullet\ar@{->}[d]\sb-{\tau_{h}}\\
\ 
}
\]
where $\tau_{h}$ is a Tschirnhausen transformation, or

\[
\xyL{2cm}\xymatrix{\bullet\ar@{->}[d]\sb-{r_{i}^{\gamma}}\\
\ 
}
\]
where $r_{i}^{\gamma}$ is a ramification of the first type, or\foreignlanguage{english}{
\[
\xyL{1cm}\xyC{6mm}\xymatrix{ & \bullet\ar@{->}[ldd]\sb-{r_{m+i}^{d,+}}\ar@{->}[rdd]\sp-{r_{m+i}^{d,-}}\\
\\
\  & \  & \ 
}
\]
}where $r_{m+i}^{d,+},\ r_{m+i}^{d,-}$ are ramifications of the second
type, or

\[
\xyL{1cm}\xyC{6mm}\xymatrix{ &  & \bullet\ar@{->}[llddd]\sb-{\displaystyle \pi_{l,s}}\ar@{->}[lddd]\ar@{->}[ddd]\ar@{}|-{\ldots}[rddd]\ar@{->}[rrddd]\\
\\
\\
\  & \  & \  & \  & \ 
}
\]
where the pair $\left(l,s\right)$ is of the form $\left(i,j\right),\left(m+i,j\right)$
or $\left(m+i,m+j\right)$ (corresponding to the three types of blow-up
transformation) and each branch corresponds to an element of the collection
$\pi_{l,s}$.

\smallskip{}

A \emph{tree} is defined inductively as follows: a tree of \emph{height
}zero is a vertex $\bullet$ ; a tree of height $\leq N$ is obtained
from a tree $T$ of height $\leq N-1$ by adjoining an elementary
tree to the end of each branch of $T$. The height of $T$ will be
denoted by $\mathrm{h}\left(T\right)$. A branch $\mathfrak{b}$ of
a tree $T$ can be represented as an ordered tuple (from the vertex
to the end of the branch) of elementary transformations $\left(\nu_{1},\ldots,\nu_{N}\right)$. 

\smallskip{}

An \emph{admissible tree} is a tree $T$ such that for each branch
$\mathfrak{b}=\left(\nu_{1},\ldots,\nu_{N}\right)$ of $T$, the map
$\rho_{\mathfrak{b}}=\nu_{1}\circ\ldots\circ\nu_{N}$ is an admissible
transformation. It follows that $\mathfrak{b}$ induces a homomorphism
$T_{\mathfrak{b}}:\mathbb{R}\left\llbracket X^{*},Y\right\rrbracket \to\mathbb{R}\left\llbracket X'^{*},Y'\right\rrbracket $,
by setting $T_{\mathfrak{b}}\left(F\right)=F\circ\nu_{1}\circ\ldots\circ\nu_{N}$. \end{defn}
\begin{rems}
\label{rem:tree preserves good families}$\ $
\begin{enumerate}
\item Let $T$ be an admissible tree and $\mathfrak{b}$ be one of its branches.
Since $T_{\mathfrak{b}}$ is a homomorphism, if $U$ is a unit, then
$T_{\mathfrak{b}}\left(U\right)$ is also a unit.
\item Let $X=\left(X_{1},\ldots,X_{m}\right)$, $Y=\left(Y_{1},\ldots,Y_{n}\right)$.
It is easy to see that, if $T$ is an admissible tree and $\mathcal{F}\subset\mathbb{R}\left\llbracket X^{*},Y\right\rrbracket $
is a family with good total support, then, for every branch $\mathfrak{b}$
of $T$, the family $T_{\mathfrak{b}}\left(\mathcal{F}\right)$ still
has good total support. More precisely, let $F\in\mathbb{R}\left\llbracket X^{*},Y\right\rrbracket $
and $H\in\mathbb{R}\left\llbracket X^{*},\hat{Y}\right\rrbracket $,
where $Y=\left(Y_{1},\ldots,Y_{n}\right)$ and $\hat{Y}=\left(Y_{1},\ldots,Y_{n-1}\right)$;
suppose $\mathrm{Supp}_{X}\left(F\right)\subseteq S_{1}\times\ldots\times S_{m}$
and $\mathrm{Supp}_{X}\left(H\right)\subseteq S'_{1}\times\ldots\times S'_{m}$,
where $S_{i},S'_{i}\subset[0,\infty)$ are well ordered sets. Then
we have $\mathrm{Supp}_{X}\left(L_{i,c}\left(F\right)\right)=\mathrm{Supp}_{X}\left(r_{m+i}^{d,\pm}\left(F\right)\right)=\mathrm{Supp}_{X}\left(F\right)$
and $\mathrm{Supp}_{X}\left(\tau_{h}\left(F\right)\right)\subseteq\tilde{S}_{1}\times\ldots\times\tilde{S}_{m}$,
with $\tilde{S}_{k}=\left\{ a+nb:\ a\in S_{k},\ b\in S'_{k},\ n\in\mathbb{N}\right\} $.
Moreover, $\mathrm{Supp}_{X}\left(r_{i}^{\gamma}\left(F\right)\right)\subseteq\tilde{S}_{1}\times\ldots\times\tilde{S}_{m}$,
with $\tilde{S}_{i}=\left\{ \gamma a:\ a\in S_{i}\right\} $ and $\tilde{S}_{k}=S_{k}$
for $k\not=i$. Finally, for $1\leq i,j\leq m$ and $\lambda\in[0,\infty)$,
we have $\mathrm{Supp}_{X}\left(\pi_{i,j}^{\lambda}\left(F\right)\right)\subseteq\tilde{S}_{1}\times\ldots\times\tilde{S}_{m}$,
with $\tilde{S}_{j}=\left\{ a+b:\ a\in S_{j},\ b\in S_{i}\right\} $
and $\tilde{S}_{k}=S_{k}$ for $k\not=j$ (the argument for the other
types of blow-up transformation is similar).
\item Let $T$ be an admissible tree and $\mathfrak{b}$ be one of its branches,
inducing a homomorphism $T_{\mathfrak{b}}:\mathbb{R}\left\llbracket X^{*},Y\right\rrbracket \to\mathbb{R}\left\llbracket X'^{*},Y'\right\rrbracket $.
It follows from Lemma \ref{Lem: elementary transf induce injective morph}
that if $F\in\mathbb{R}\left\llbracket X^{*},Y\right\rrbracket \setminus\left\{ 0\right\} \cap\text{Im\ensuremath{\left(\mathcal{T}\right)}}$,
then $T_{\mathfrak{b}}\left(F\right)\in\mathbb{R}\left\llbracket X'^{*},Y'\right\rrbracket \setminus\left\{ 0\right\} \cap\text{Im\ensuremath{\left(\mathcal{T}\right)}}.$
\end{enumerate}
\end{rems}

\subsection{Monomialisation of generalised power series\label{sub:Monomialisation-of-generalised}}
\begin{defn}
A series $F\in\mathbb{R}\left\llbracket X^{*}\right\rrbracket \setminus\left\{ 0\right\} $
is \emph{normal} if there exist $\alpha\in\mathbb{A}^{m}$ and an
invertible series $U\in\mathbb{R}\left\llbracket X^{*}\right\rrbracket $
such that $F\left(X\right)=X^{\alpha}U\left(X\right)$ .
\end{defn}
In analogy with \cite[Lemma 4.7]{bm_semi_subanalytic} and \cite[Lemma 2.2]{rsw},
we have the following result.
\begin{lem}
\label{lem: prod of series}$\ $ 
\begin{enumerate}
\item The series $F_{1},\ldots,F_{k}\in\mathbb{R}\left\llbracket X^{*}\right\rrbracket \setminus\left\{ 0\right\} $
are all normal if and only if the series $\prod_{i=1}^{k}F_{i}$ is
normal.
\item If $F_{1},F_{2}$ and $F_{1}-F_{2}$ are normal, then either $F_{1}|F_{2}$
or $F_{2}|F_{1}$.
\end{enumerate}
\end{lem}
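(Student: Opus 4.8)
The plan is to translate ``normal'' into a statement purely about supports. For a nonzero $F\in\mathbb{R}\llbracket X^{*}\rrbracket$ set $\mu(F)=(\mu_{1},\ldots,\mu_{m})$, where $\mu_{i}=\min\{\gamma_{i}:\gamma\in\mathrm{Supp}(F)\}$; this minimum exists since $\{\gamma_{i}:\gamma\in\mathrm{Supp}(F)\}$ is a well-ordered subset of $[0,\infty)$. Then $X^{\mu(F)}$ divides $F$ and $\bar{F}:=X^{-\mu(F)}F$ has no monomial factor other than $1$, i.e. $\mu(\bar{F})=0$. The elementary observation I would use throughout is that $F$ is normal if and only if $\bar{F}$ is a unit; equivalently, $\mathrm{Supp}(F)$ has a minimum (necessarily equal to $\mu(F)$), at which $F$ has a nonzero coefficient. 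Here the requirement $\mu(F)\in\mathbb{A}^{m}$ from the definition of a normal series is automatic, since every series in play has support contained in $\mathbb{A}^{m}$ (cf. Definition~\ref{def: admissible exponents}), so that $\mu(F)$, being a tuple of components of support elements, lies in $\mathbb{A}^{m}$. I will also use freely that $\mathbb{R}\llbracket X^{*}\rrbracket$ is an integral domain in which a series with nonzero constant term is invertible \cite{vdd:speiss:gen}.

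For part (1), ``$\Rightarrow$'' is immediate: if $F_{i}=X^{\alpha_{i}}U_{i}$ with $U_{i}$ a unit and $\alpha_{i}\in\mathbb{A}^{m}$, then $\prod_{i}F_{i}=X^{\sum_{i}\alpha_{i}}\prod_{i}U_{i}$, the exponent $\sum_{i}\alpha_{i}$ lies in $\mathbb{A}^{m}$ because $\mathbb{A}$ is a semi-ring, and $\prod_{i}U_{i}$ is again a unit. For ``$\Leftarrow$'' it suffices, by induction on $k$, to treat $k=2$, so write $F=F_{1}$, $G=F_{2}$ and assume $FG$ normal. I would first establish $\mu(FG)=\mu(F)+\mu(G)$: the inequality $\mu(FG)\geq\mu(F)+\mu(G)$ follows from $\mathrm{Supp}(FG)\subseteq\mathrm{Supp}(F)+\mathrm{Supp}(G)$, and for the reverse, fixing a coordinate $i$ and writing $\hat{X}$ for the remaining variables, expand $F=\sum_{k}X_{i}^{k}P_{k}(\hat{X})$, $G=\sum_{k}X_{i}^{k}Q_{k}(\hat{X})$ with $P_{k},Q_{k}\in\mathbb{R}\llbracket\hat{X}^{*}\rrbracket$; if $a=\mu(F)_{i}$ and $b=\mu(G)_{i}$ then $P_{a},Q_{b}\neq0$, and the coefficient of $X_{i}^{a+b}$ in $FG$ equals exactly $P_{a}Q_{b}$, which is nonzero because $\mathbb{R}\llbracket\hat{X}^{*}\rrbracket$ is a domain. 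Hence $\overline{FG}=X^{-\mu(FG)}FG=\bar{F}\,\bar{G}$, and this is a unit if and only if its constant term $\bar{F}(0)\bar{G}(0)$ is nonzero, i.e. if and only if both $\bar{F}$ and $\bar{G}$ are units; since $FG$ is normal, $\overline{FG}$ is a unit, so $F$ and $G$ are normal. The induction step is then routine: $\prod_{i=1}^{k}F_{i}=F_{1}\cdot\prod_{i=2}^{k}F_{i}$ normal yields $F_{1}$ and $\prod_{i=2}^{k}F_{i}$ normal, and one applies the inductive hypothesis to the latter.

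For part (2), write $F_{1}=X^{\alpha_{1}}U_{1}$ and $F_{2}=X^{\alpha_{2}}U_{2}$ with $U_{1},U_{2}$ units, so $\alpha_{j}=\mu(F_{j})$ is the minimum of $\mathrm{Supp}(F_{j})$, carrying the nonzero coefficient $U_{j}(0)$. It is enough to show that $\alpha_{1}$ and $\alpha_{2}$ are comparable: if, say, $\alpha_{1}\leq\alpha_{2}$, then $\alpha_{2}-\alpha_{1}\in[0,\infty)^{m}$, so $X^{\alpha_{2}-\alpha_{1}}U_{1}^{-1}U_{2}\in\mathbb{R}\llbracket X^{*}\rrbracket$ and $F_{2}=F_{1}\cdot X^{\alpha_{2}-\alpha_{1}}U_{1}^{-1}U_{2}$, i.e. $F_{1}\mid F_{2}$. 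Suppose instead $\alpha_{1},\alpha_{2}$ are incomparable; in particular $\alpha_{1}\neq\alpha_{2}$, and $\alpha_{1}\notin\mathrm{Supp}(F_{2})$ (since $\mathrm{Supp}(F_{2})\subseteq\alpha_{2}+[0,\infty)^{m}$ and $\alpha_{1}\not\geq\alpha_{2}$), and symmetrically $\alpha_{2}\notin\mathrm{Supp}(F_{1})$. Then the coefficient of $X^{\alpha_{1}}$ in $F_{1}-F_{2}$ equals $U_{1}(0)\neq0$ and that of $X^{\alpha_{2}}$ equals $-U_{2}(0)\neq0$, so $\alpha_{1},\alpha_{2}\in\mathrm{Supp}(F_{1}-F_{2})$. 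As $F_{1}-F_{2}$ is normal, $\mathrm{Supp}(F_{1}-F_{2})$ has a minimum $\gamma$, whence $\gamma\leq\alpha_{1}$ and $\gamma\leq\alpha_{2}$; but the coefficient of $X^{\gamma}$ in $F_{1}-F_{2}$ is nonzero, so $\gamma\in\mathrm{Supp}(F_{1})\cup\mathrm{Supp}(F_{2})$, and if $\gamma\in\mathrm{Supp}(F_{1})$ then $\gamma\geq\alpha_{1}$, forcing $\gamma=\alpha_{1}\leq\alpha_{2}$, a contradiction; the case $\gamma\in\mathrm{Supp}(F_{2})$ is symmetric.

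The genuine content, and the point I expect to need the most care, is the additivity $\mu(FG)=\mu(F)+\mu(G)$ in part (1) — more precisely, ruling out the cancellation that could in principle shrink $\mathrm{Supp}(FG)$ — which is exactly what the integral domain property of $\mathbb{R}\llbracket X^{*}\rrbracket$ and $\mathbb{R}\llbracket\hat{X}^{*}\rrbracket$ provides, together with the (routine) verification that the exponents produced are admissible. Part (2), by contrast, is a short order-theoretic argument once normality has been rephrased in terms of the minimum of the support, and I anticipate no further obstacle there.
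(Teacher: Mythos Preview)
Your proof is correct and follows essentially the same approach as the paper's. For part~1 both arguments factor out the maximal monomial from each $F_i$ and then invoke the integral-domain property of $\mathbb{R}\llbracket X^{*}\rrbracket$ (you via the additivity $\mu(FG)=\mu(F)+\mu(G)$, the paper by setting a variable to zero and reaching a contradiction); for part~2 both reduce to showing $\alpha_1,\alpha_2$ are comparable by inspecting the minimal elements of $\mathrm{Supp}(F_1-F_2)$. Your remark that the $\mathbb{A}^{m}$ requirement is automatic is not literally justified by the lemma's hypotheses (the $F_i$ are arbitrary in $\mathbb{R}\llbracket X^{*}\rrbracket$), but the paper's proof is equally silent on this point, and in every application the series lie in $\mathrm{Im}(\mathcal{T})$ so the issue is moot.
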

\begin{proof}
Regarding 1., suppose that $\prod F_{i}\left(X\right)$ is normal.
Then there exist two disjoint subsets $Z$ and $W$ of the set $X$
of all the variables, and multi-indices $\alpha,\ \beta$, and series
$U\left(X\right),\tilde{F_{1}}\left(X\right),\ldots,\tilde{F}_{k}\left(X\right)$
such that
\[
Z^{\alpha}\prod\tilde{F}_{i}\left(X\right)=W^{\beta}U\left(X\right),
\]
where all the components of $\alpha$ and $\beta$ are strictly positive,
no power of any of the variables divides any of the $\tilde{F}_{i}$
and $U\left(0\right)\not=0$.

If $W=\emptyset$, then the $\tilde{F}_{i}$ are all units, and the
statement is proved. Otherwise, suppose that $X_{1}\in W$; then $\prod\tilde{F}_{i}\left(0,X_{2},\ldots,X_{m+n}\right)=0$.
But this is impossible, because no power of $X_{1}$ divides any of
the $\tilde{F}_{i}$.

Regarding 2., there are multi-indices $\alpha_{1},\ \alpha_{2}$ and
units $U_{1},\ U_{2}$ such that $F_{1}=X^{\alpha_{1}}U_{1}$ and
$F_{2}=X^{\alpha_{2}}U_{2}$. Hence, the minimal elements of $\mathcal{\mathrm{Supp}}\left(F_{1}-F_{2}\right)$
are contained in the set $\left\{ \alpha_{1},\alpha_{2}\right\} $.
Since $F_{1}-F_{2}$ is normal, either $\alpha_{1}\leq\alpha_{2}$
or $\alpha_{2}\leq\alpha_{1}$.\end{proof}
\begin{notation}
We fix $m,n,m',n'\in\mathbb{N}$ such that $m+n=m'+n'$. Let $X=\left(X_{1},\ldots,X_{m}\right),\ Y=\left(Y_{1},\ldots,Y_{n}\right)$
and $X'=\left(X_{1}',\ldots,X_{m'}'\right),\ Y'=\left(Y_{1}',\ldots,Y_{n'}'\right)$.
If $T$ is an admissible tree and $\mathfrak{b}$ is one of its branches,
we will always implicitly assume that $T_{\mathfrak{b}}:\mathbb{R}\left\llbracket X^{*},Y^{*}\right\rrbracket \to\mathbb{R}\left\llbracket X'^{*},Y'\right\rrbracket $.
Let $\hat{Y}=\left(Y_{1},\ldots,Y_{n-1}\right)$ and $\hat{Y'}=\left(Y'_{1},\ldots,Y'_{n'-1}\right)$. 
\end{notation}
The main result of this subsection is the following monomialisation
algorithm for generalised power series. The proof methods take inspiration
from \cite{bm_semi_subanalytic} and \cite{rsw}.
\begin{thm}
\label{thm: monomialisation}Let $F_{1},\ldots,F_{p}\in\mathbb{R}\left\llbracket X^{*},Y\right\rrbracket \setminus\left\{ 0\right\} \cap\mathrm{Im}\left(\mathcal{T}\right)$.
There exists an admissible tree $T$ such that for each branch $\mathfrak{b}$
of $T$ the series $T_{\mathfrak{b}}\left(F_{1}\right),\ldots,T_{\mathfrak{b}}\left(F_{p}\right)\in\mathbb{R}\left\llbracket X'^{*},Y'\right\rrbracket \setminus\left\{ 0\right\} \cap\mathrm{Im}\left(\mathcal{T}\right)$
are normal and linearly ordered by division.\end{thm}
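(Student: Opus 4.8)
The proof is by induction on the number of $Y$-variables $n$, with an inner induction on the complexity of the $F_i$, following the blow-up-and-Tschirnhausen strategy of \cite{bm_semi_subanalytic} adapted to generalised series as in \cite{vdd:speiss:gen,rsw}. By Remark~\ref{rem:tree preserves good families} the family $\{F_1,\dots,F_p\}$ has good total support, and this property together with membership in $\mathrm{Im}(\mathcal{T})$ and non-vanishing is preserved under the branches of any admissible tree; so throughout I only need to track how the supports shrink. The base case is $n=0$: here $F_i\in\mathbb{R}\llbracket X^*\rrbracket$, and one monomialises the $X$-part alone. The key tool is Lemma~\ref{lem: prod of series}: it suffices to make the single product $\prod_i F_i$ normal (then each $F_i$ is normal by part~1), and once all the $F_i$ and all differences $F_i-F_j$ are normal, part~2 gives the linear ordering by division. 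So the heart of the base case is: given one series $G\in\mathbb{R}\llbracket X^*\rrbracket\setminus\{0\}$ with good support, produce an admissible tree using only ramifications $r_i^\gamma$ and blow-up charts $\pi_{i,j}^\lambda$ making $T_{\mathfrak b}(G)$ normal on every branch. This is exactly the generalised-series desingularisation of \cite[\S 4]{vdd:speiss:gen}: after a ramification to clear denominators in the minimal exponents one reduces to a finite-exponent situation, then a finite sequence of blow-ups $\pi_{i,j}^\lambda$ (one for each finite value $\lambda$ of the relevant quotient, plus the charts $\pi_{i,j}^0=\lambda{=}0$ and $\pi_{j,i}^0=\lambda{=}\infty$) strictly decreases a suitable invariant (e.g.\ the number of variables actually occurring, or a lexicographic measure of the Newton polyhedron) until $G$ becomes a monomial times a unit.

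**Induction step.** Assume the result for $n-1$ and let $F_1,\dots,F_p\in\mathbb{R}\llbracket X^*,Y\rrbracket$. Write each $F_i$ in the variable $Y_n$ as $F_i=\sum_{k}H_{i,k}(X,\hat Y)Y_n^k$ and consider the first (lowest) nonzero coefficient $H_{i,d_i}$ for each $i$; applying the inductive hypothesis to the finite family of all these $H_{i,d_i}$'s (and, as needed, to a few further coefficients — this is the routine bookkeeping I won't grind through) we get an admissible tree after which each leading coefficient is normal, $H_{i,d_i}=X'^{\,\alpha_i}U_i$ with $U_i$ a unit. Dividing out the monomial $X'^{\,\alpha_i}$ (this is a formal operation on series, legitimate since we stay inside $\mathbb{R}\llbracket X'^*,Y'\rrbracket\cap\mathrm{Im}(\mathcal T)$) and then, if necessary, performing a ramification $r_{m+n}^{d,\pm}$ in $Y_n$ to make the exponent of the leading term a positive integer, we bring $F_i$ to Weierstrass-like form $F_i=X'^{\,\alpha_i}\bigl(Y_n^{d_i}U_i+(\text{higher order in }Y_n\text{ with coefficients of higher }X\text{-order})\bigr)$. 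Now apply a Tschirnhausen translation $\tau_h$ in $Y_n$ — with $h$ chosen (from the algebra, using the closure properties and the fact that $U_i$ is a unit) to kill the $Y_n^{d_i-1}$-term — and a blow-up chart $\pi_{m+n,j}^\lambda$ (ranging over the finitely many relevant $\lambda$, including $\pm\infty$) to factor $Y_n$ out of the remainder. One must do this simultaneously for all $F_i$: the standard device is to treat them one at a time, each time either strictly dropping the $Y_n$-order of the current series or strictly dropping (lexicographically, after the inductive improvement of coefficients) the invariant, so that the process terminates. When the $Y_n$-order of every $F_i$ reaches $0$, every $F_i$ is a monomial in $X',Y_n$ times a unit — i.e.\ normal — and a final application of Lemma~\ref{lem: prod of series}(1),(2), after first passing to a tree that normalises the product $\prod_i F_i$ and all pairwise differences $F_i-F_j$, yields the linear ordering by division on each branch.

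**Main obstacle.** The delicate point is the termination of the induction step: one has to exhibit an invariant (analogous to the one in \cite[Lemma 4.7]{bm_semi_subanalytic}) — something like the pair $(\text{number of }Y\text{-variables},\ \text{a Newton-polyhedron complexity of the leading part in the remaining variables})$ — that strictly decreases under each elementary transformation used, \emph{and} to ensure that handling the $F_i$ successively does not undo the normalisation already achieved for the earlier ones. The latter is the reason the tree formalism is set up the way it is: once $F_1,\dots,F_{i-1}$ are monomials times units, any further admissible transformation keeps them monomials times units (a unit composed with a blow-up/ramification/translation is still a unit, and a monomial pulls back to a monomial times a unit — here the hypotheses on $\mathbb{A}$ and $\mathbb{K}$ and the closure of the exponent semiring under the operations induced by the $\pi$'s are exactly what is needed). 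Good support (Remark~\ref{rem:tree preserves good families}(2)) is what guarantees the relevant sets of $\lambda$'s are finite at each stage, so that each elementary tree used has finitely many branches; combined with the strictly decreasing invariant this gives a finite admissible tree $T$ with the required properties on every branch.
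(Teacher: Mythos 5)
Your induction step has a genuine gap at the crucial point. You normalise the lowest nonzero $Y_n$-coefficient $H_{i,d_i}$ to $X'^{\alpha_i}U_i$ and then assert that you can ``divide out the monomial $X'^{\alpha_i}$'' from the whole series $F_i=\sum_k H_{i,k}(X,\hat Y)Y_n^k$, arriving at the claimed Weierstrass-like form $F_i=X'^{\alpha_i}\bigl(Y_n^{d_i}U_i+\cdots\bigr)$. This is not a legitimate operation: there is no reason $X'^{\alpha_i}$ should divide the other coefficients $H_{i,k}$ for $k>d_i$, and in general there are infinitely many nonzero $H_{i,k}$, so you cannot appeal to the inductive hypothesis applied to ``the finite family of all these $H_{i,d_i}$'s and, as needed, a few further coefficients'' — that is precisely the step that needs a real argument, not routine bookkeeping. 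The paper resolves both problems by applying Lemma~\ref{lem:singular blow ups} for arbitrary $n$ (not only at $n=0$, as you do): that lemma factors out a \emph{common} monomial $X^\alpha$ from the whole series, producing $F=X^\alpha G$ with $G(0,Y)\not\equiv0$; a linear transformation $L_{n,c}$ then makes $G$ regular of finite order $d$ in $Y_n$, so that, after the Tschirnhausen translation, only the \emph{finitely many} coefficients $G_2,\ldots,G_d\in\mathbb{R}\llbracket X^*,\hat Y\rrbracket$ need to be normalised by the inductive hypothesis. Skipping this global factorisation is not an alternative route; it is the missing content.

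Two smaller points. First, the ramification you invoke ``in $Y_n$ to make the exponent of the leading term a positive integer'' is misdirected: the $Y_n$-exponents are already natural numbers, and what the paper actually ramifies is the tuple $\hat Y$ by $d!$ (the map $r^\pm=r_{m+1}^{d!,\pm}\circ\cdots\circ r_{m+n}^{d!,\pm}$), in order to make the $i$-th roots $\check G_i^{1/i}$ well-defined via Remark~\ref{rem:power of a unit} before normalising and comparing the $\alpha_i/i$. Second, your termination discussion gestures at ``a Newton-polyhedron complexity'' but never pins it down; the paper's inner invariant is the $Y_n$-order of $G$, which strictly decreases under a chart $\pi_{m+n,j}^\lambda$ with $\lambda\neq0,\pm\infty$ \emph{because} the preceding Tschirnhausen kills the $Y_n^{d-1}$-term (so the new coefficient of $Y_n^{d-1}$ is $d\lambda\tilde U$, a unit), while $\lambda=\pm\infty$ normalises outright and $\lambda=0$ removes a variable from the critical monomial. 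Without the Tschirnhausen-kills-$Y_n^{d-1}$ mechanism the drop in order is simply not there.
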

\begin{rem}
In the proof of Theorem \ref{thm: monomialisation} we will often
compose branches of different admissible trees. We leave it to the
reader to check at each stage that such compositions are allowed,
i.e. the composition defines a branch of some admissible tree.\end{rem}
\begin{lem}
\label{lem:singular blow ups}Let $\mathcal{F}=\left\{ F_{k}:\ k\in\mathbb{N}\right\} \subset\mathbb{R}\left\llbracket X^{*},Y\right\rrbracket $
be a family such that $\mathrm{Supp}_{X}\left(\mathcal{F}\right)$
is good. Then there is an admissible tree $T$ (consisting of ramifications
and blow-up transformations) such that, for every branch $\mathfrak{b}$
of $T$, we have $T_{\mathfrak{b}}:\mathbb{R}\left\llbracket X^{*},Y\right\rrbracket \to\mathbb{R}\left\llbracket X'^{*},Y'\right\rrbracket $
with $m'\leq m$ and there exist $\alpha\in[0,\infty)^{m'}$ and series
$G_{k}\in\mathbb{R}\left\llbracket X'^{*},Y'\right\rrbracket \ \left(k\in\mathbb{N}\right)$
such that, for every $k\in\mathbb{N}$, $T_{\mathfrak{b}}\left(F_{k}\right)\left(X',Y'\right)=X'^{\alpha}G_{k}\left(X',Y'\right)$
and for some $k_{0}\in\mathbb{N}$, $G_{k_{0}}\left(0,Y'\right)\not\equiv0$.\end{lem}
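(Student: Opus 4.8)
The plan is to reduce the $X$-monomialisation of the family $\mathcal{F}$ to a purely combinatorial manipulation of the good set $\mathrm{Supp}_X(\mathcal{F})$, after which one reads off the common monomial factor $X'^\alpha$. Since $\mathrm{Supp}_X(\mathcal{F})$ is good, by \cite[Lemma 4.2]{vdd:speiss:gen} the set $\mathrm{Supp}_X(\mathcal{F})_{\mathrm{min}}$ of minimal points is finite, say equal to $\{\beta^{(1)},\dots,\beta^{(N)}\}\subset[0,\infty)^m$. The goal is to construct an admissible tree, using only ramifications $r_i^\gamma$ and blow-up charts of the form $\pi_{i,j}^\lambda$ (with $1\le i,j\le m$), so that after transforming along any branch $\mathfrak b$ the transformed total support $\mathrm{Supp}_X(T_{\mathfrak b}(\mathcal{F}))$ has a \emph{single} minimal element $\alpha$; since this $\alpha$ is then $\le$ every exponent appearing, each $T_{\mathfrak b}(F_k)$ is divisible by $X'^\alpha$, and writing $T_{\mathfrak b}(F_k)=X'^\alpha G_k$ the series $G_k$ has minimal $X$-exponent the origin. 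Finally one checks that not all $G_k$ vanish on $\{X'=0\}$: since some $F_{k_0}$ contributes a minimal point, after the transformation $G_{k_0}$ has $0$ in its $X$-support, i.e. $G_{k_0}(0,Y')\not\equiv 0$.

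\textbf{Reducing to one minimal vertex.} The combinatorial heart is the following claim: given finitely many points $\beta^{(1)},\dots,\beta^{(N)}$ in $[0,\infty)^m$, there is an admissible tree of ramifications and $\pi_{i,j}^\lambda$-charts such that along each branch the images of all the $\beta^{(\ell)}$ become linearly ordered by the coordinatewise order $\le$ (equivalently, have a unique minimum). One proceeds by induction on $N$. For $N\le 1$ there is nothing to do. For the inductive step, pick two points $\beta=\beta^{(1)}$ and $\beta'=\beta^{(2)}$ that are $\le$-incomparable (if none exist we are already done). First apply a ramification $r_i^\gamma$ with suitable $\gamma\in\mathbb{K}^{>0}$ to clear denominators so that one may assume (after re-scaling) that the relevant coordinates lie in a common value group; then apply a composition of blow-up charts $\pi_{i,j}^{0}$ and $\pi_{j,i}^{0}$ (monomial substitutions $X_i=X_j'X_i'$) which, in terms of exponents, replaces the coordinate pair $(\beta_i,\beta_j)$ by $(\beta_i,\beta_i+\beta_j)$ or $(\beta_j,\beta_i+\beta_j)$; choosing between these according to the sign of $\beta_i-\beta_i'$ makes $\beta$ and $\beta'$ comparable \emph{without creating new incomparabilities among the already-comparable pairs} — this monotonicity is exactly the content of Remark~\ref{rem:tree preserves good families}(2), which tells us how supports transform under these charts, and one uses it to verify that the partial order only gains relations. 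Iterating over all incomparable pairs (there are finitely many), we arrive at a branch along which $\{\beta^{(\ell)}\}$ is totally ordered, hence has a unique minimum $\alpha$.

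\textbf{Packaging and the nonvanishing condition.} Having produced $T$, fix a branch $\mathfrak b$ and the corresponding minimum $\alpha\in[0,\infty)^{m'}$ of $\mathrm{Supp}_X(T_{\mathfrak b}(\mathcal{F}))$ (note $m'\le m$, in fact $m'=m$ for these particular transformations, which is consistent with the statement). By Remarks~\ref{rem:tree preserves good families}(2) the transformed family still has good $X$-support, and since $\alpha$ lies below every element of that support, monomial division in the series ring gives $T_{\mathfrak b}(F_k)(X',Y')=X'^\alpha G_k(X',Y')$ with $G_k\in\mathbb{R}\llbracket X'^{*},Y'\rrbracket$; moreover $\alpha$ is a sum of products of admissible exponents with the original minimal exponents, hence $\alpha\in\mathbb{A}^{m'}$ as one would want (though the statement only asks for $\alpha\in[0,\infty)^{m'}$). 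For the last assertion: the original point $\alpha$ (pulled back) was a minimal point of $\mathrm{Supp}_X(\mathcal{F})$ \emph{witnessed} by some $F_{k_0}$, i.e. there is $N_0\in\mathbb{N}^n$ with the corresponding coefficient of $F_{k_0}$ nonzero at the pullback of $\alpha$; tracking this monomial through the (injective, by Lemma~\ref{Lem: elementary transf induce injective morph}) homomorphism $T_{\mathfrak b}$ shows that $\alpha\in\mathrm{Supp}_X(T_{\mathfrak b}(F_{k_0}))$, so $\alpha-\alpha=0\in\mathrm{Supp}_X(G_{k_0})$, i.e. $G_{k_0}(0,Y')\not\equiv 0$.

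\textbf{Main obstacle.} The delicate point is the inductive bookkeeping in the combinatorial claim: one must verify that each round of blow-up charts strictly decreases the number of incomparable pairs among $\{\beta^{(\ell)}\}$ while \emph{never} re-introducing an incomparability, and that the process terminates — this is the analogue of the classical "monomialisation by blow-ups" termination argument (as in \cite{bm_semi_subanalytic}), and it requires choosing a suitable invariant (e.g. a lexicographically-ordered multiset of pairwise "defects" $\min(\beta_i-\beta_i',0)$ summed over coordinates) and showing it drops. A secondary, more routine obstacle is checking that the chosen ramification exponents $\gamma$ can be taken in $\mathbb{K}^{>0}$ (so that the ramifications are genuinely admissible in the sense of Definition~\ref{def: admissible exponents}): this follows because the coordinates of the finitely many minimal points $\beta^{(\ell)}$ all lie in $\mathbb{A}$, hence their pairwise differences and ratios needed to synchronise value groups lie in $\mathbb{K}$.
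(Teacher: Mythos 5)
Your overall strategy — monomialise the $X$-support by ramifications and blow-up charts so that each branch has a unique minimal exponent $\alpha$, then factor out $X'^{\alpha}$ — is the same idea as the paper's, which inducts on the lexicographic pair $\bigl(m,\,b_X(\mathcal F_{\mathrm{min}})\bigr)$ using the blow-up height $b_X$ and the reduction step of \cite[Prop.~4.14]{vdd:speiss:gen}. However, there is a genuine gap in your argument that makes it fail as written.

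An admissible tree that branches at a blow-up step (Definition \ref{Def: admissible trees}) must include \emph{every} chart of the collection $\pi_{i,j}$, i.e.\ $\pi_{i,j}^{\lambda}$ for all $\lambda\in[0,\infty]$, not just the two ``corner'' charts $\pi_{i,j}^{0}$ and $\pi_{i,j}^{\infty}$. For $\lambda\in(0,\infty)$ the chart $\pi_{i,j}^{\lambda}$ sends $X_i\mapsto X_j'(\lambda+Y_1')$ and \emph{decreases the number of $X$-variables}: $m'=m-1$, not $m$. Your proposal explicitly asserts ``$m'=m$ for these particular transformations,'' which is incorrect, and it says nothing about what happens along the $\lambda\in(0,\infty)$ branches; yet the conclusion of the lemma must hold on \emph{every} branch of the admissible tree. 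The paper's induction is designed precisely to close this loop: on $\lambda\in\{0,\infty\}$ the blow-up height $b_X(\mathcal F_{\mathrm{min}})$ strictly drops (with $m$ fixed), and on $\lambda\in(0,\infty)$ the first component $m$ drops, so the lexicographic invariant decreases in both cases and the inductive hypothesis applies. Without an analogous argument for the intermediate-$\lambda$ branches, your tree is not an admissible tree in the sense required, and the proof does not go through.

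Two secondary points. First, you invoke Remark \ref{rem:tree preserves good families}(2) as ``exactly'' supplying the monotonicity that prevents new incomparable pairs from appearing; in fact that remark only computes how supports transform, and while the monotonicity of the exponent maps $(a_i,a_j)\mapsto(a_i,a_i+a_j)$ is easy to check directly, it is not what that remark states. Second, your termination argument via a ``lexicographically-ordered multiset of pairwise defects'' is flagged by you as the main obstacle but is left entirely unverified; the paper sidesteps this by citing the concrete decreasing invariant $b_X$ and its reduction step from \cite[4.11, Prop.~4.14]{vdd:speiss:gen}, which is where the real work lives.
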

\begin{proof}
Here we view $\mathcal{F}$ as a subset of $\mathbb{A}\left\llbracket X^{*}\right\rrbracket $,
with $\mathbb{A}=\mathbb{R}\left\llbracket Y\right\rrbracket $. In
\cite[4.11]{vdd:speiss:gen} the authors define the \emph{blow-up
height} of a finite set of monomials, denoted by $b_{X}$. It follows
from the definition of $b_{X}$ that if $b_{X}\left(\mathcal{F}_{\mathrm{min}}\right)=\left(0,0\right)$,
then there exists $\alpha\in[0,\infty)^{m}$ such that $\mathcal{F}_{\mathrm{min}}=\left\{ X^{\alpha}\right\} $.
The proof is by induction on the pairs $\left(m,b_{X}\left(\mathcal{F}_{\text{min}}\right)\right)$,
ordered lexicographically. If $m=0$, there is nothing to prove. If
$m=1$, then $b_{X}\left(\mathcal{F}_{\mathrm{min}}\right)=\left(0,0\right)$.
In this case, for every $k\in\mathbb{N}$, there are a series $G_{k}$
such that $F_{k}=X^{\alpha}G_{k}$ and for some $k_{0}\in\mathbb{N}$
we have $G_{k}\left(0,Y\right)\not\equiv0$. 

Hence we may assume that $m>1$ and $b_{X}\left(\mathcal{F}_{\mathrm{min}}\right)\not=\left(0,0\right)$.
It follows from the proof of \cite[Proposition 4.14]{vdd:speiss:gen}
that there are $i,j\in\left\{ 1,\ldots,m\right\} $ and suitable ramifications
$r_{i}^{\gamma},\ r_{j}^{\delta}$ of the variables $X_{i}$ and $X_{j}$
such that $b_{X}(r_{i}^{\gamma}\circ r_{j}^{\delta}\circ\mathfrak{\pi}_{i,j}^{0}\left(\mathcal{F}_{\mathrm{min}}\right))<b_{X}\left(\mathcal{F}_{\mathrm{min}}\right)$
and $b_{X}(r_{i}^{\gamma}\circ r_{j}^{\delta}\circ\pi_{i,j}^{\infty}\left(\mathcal{F}_{\mathrm{min}}\right))<b_{X}\left(\mathcal{F}_{\mathrm{min}}\right)$
for all $k\in\mathbb{N}$. Notice that $r_{i}^{\gamma}\circ r_{j}^{\delta}\circ\mathfrak{\pi}_{i,j}^{0}\left(\mathcal{F}_{\mathrm{min}}\right)$
and $r_{i}^{\gamma}\circ r_{j}^{\delta}\circ\mathfrak{\pi}_{i,j}^{\infty}\left(\mathcal{F}_{\mathrm{min}}\right)$
are finite collections of monomials and they are in fact the collections
of the minimal monomials of the families $\mathcal{F}_{0}=\left\{ r_{i}^{\gamma}\circ r_{j}^{\delta}\circ\pi_{i,j}^{0}\left(F\right):\ F\in\mathcal{F}\right\} $and
$\mathcal{F}_{\infty}=\left\{ r_{i}^{\gamma}\circ r_{j}^{\delta}\circ\pi_{i,j}^{\infty}\left(F\right):\ F\in\mathcal{F}\right\} $,
respectively. Moreover, for every series $ $ and every $\lambda\in\left(0,\infty\right)$,
the series of the family $\mathcal{F}_{\lambda}=\left\{ r_{i}^{\gamma}\circ r_{j}^{\delta}\circ\pi_{i,j}^{\lambda}\left(F\right):\ F\in\mathcal{F}\right\} $
belong to $\mathbb{R}\left\llbracket X'^{*},Y'\right\rrbracket $,
where $m'=m-1$. By Remark \ref{rem:tree preserves good families}
the families $\mathcal{F}_{\lambda}$ ($\lambda\in\left[0,\infty\right]$)
have good total support, so the inductive hypothesis applies and we
obtain the required conclusion.\end{proof}
\begin{rem}
\label{rem:power of a unit}Let $F=X^{\alpha}U\left(X\right)\in\mathbb{R}\left\llbracket X^{*}\right\rrbracket \cap\text{Im}\left(\mathcal{T}\right)$
be a normal series and let $k\in\mathbb{N}$. Then $F^{1/k}$ is a
well defined series and belongs also to $\text{Im}\left(\mathcal{T}\right)$.
In fact, $U^{1/k}-1$ can be viewed as the solution $Y$ to the implicit
function problem $\left(1+Y\right)^{k}-U\left(X\right)=0$.
\end{rem}
\bigskip{}

\begin{proof}
[Proof of Theorem \ref{thm: monomialisation}] The proof is by induction
on $m+n$. In view of Lemma \ref{lem: prod of series}, it is enough
to prove the theorem for one series $F\in\mathbb{R}\left\llbracket X^{*},Y\right\rrbracket $.
By Lemma \ref{lem:singular blow ups}, we may assume that $n>0$ and
there exist $\alpha\in[0,\infty)^{m}$ and a series $G\in\mathbb{R}\left\llbracket X{}^{*},Y\right\rrbracket $
such that $F\left(X,Y\right)=X{}^{\alpha}G\left(X,Y\right)$ and $G\left(0,Y\right)\not\equiv0$.
It is well known (see for example \cite[6.1]{vdd:speiss:gen}) that,
after performing some linear transformation of the form $L_{n,c}$,
the series $G$ becomes regular in the variable $Y_{n}$, of some
order $d\in\mathbb{N}$.

Suppose that $d=1$. Then $\frac{\partial G}{\partial Y_{n}}\left(0\right)\not=0$.
Hence, by the Implicit Function Theorem, there exists a series $A\left(X,\hat{Y}\right)$
such that $G\left(X,\hat{Y},A\left(X,\hat{Y}\right)\right)=0$ and
$A\left(0,0\right)=0$. By Axiom 6 of \ref{emp:properties of the morph},
$A\in\text{Im\ensuremath{\left(\mathcal{T}\right)}}$, so $A\left(X,\hat{Y}\right)=\mathcal{T}\left(a\left(x,\hat{y}\right)\right),$
for some germ $a\in\mathcal{A}_{m,n-1}$. Then, $\tau_{a}\left(G\right)=G\left(X,\hat{Y},A\left(X,\hat{Y}\right)\right)+Y_{n}U\left(X,Y\right)$,
for some unit $U\left(X,Y\right)\in\text{Im\ensuremath{\left(\mathcal{T}\right)}}$,
and we are done.

Suppose next that $d>1$. Since $\frac{\partial^{d}G}{\partial Y_{n}^{d}}\left(0\right)\not=0$,
by the Implicit Function Theorem, there exists a series $B\left(X,\hat{Y}\right)\in\text{Im\ensuremath{\left(\mathcal{T}\right)}}$
(hence $B\left(X,\hat{Y}\right)=\mathcal{T}\left(b\left(x,\hat{y}\right)\right),$
for some germ $b\in\mathcal{A}_{m,n-1}$) such that $\frac{\partial^{d-1}G}{\partial Y_{n}^{d-1}}\left(X,\hat{Y},B\left(X,\hat{Y}\right)\right)=0$
and $B\left(0,0\right)=0$. Hence by Taylor's Theorem,
\[
\tau_{b}\left(G\right)\left(X,Y\right)=U\left(X,Y\right)Y_{n}^{d}+G_{1}\left(X,\hat{Y}\right)Y_{n}^{d-1}+\ldots+G_{d}\left(X,\hat{Y}\right),
\]
where $U$ is a unit and $G_{i}=\frac{\partial^{d-i}G}{\partial Y_{n}^{d-i}}\left(X,\hat{Y},B\left(X,\hat{Y}\right)\right)$.
Hence $G_{1}\left(X,\hat{Y}\right)=0$.

By the inductive hypothesis, there is an admissible tree $\check{T}$
(acting as the identity on $Y_{n}$) such that, for every branch $\check{\mathfrak{b}}$
of $\check{T}$, the series in the set $\left\{ \check{T}_{\check{\mathfrak{b}}}\left(G_{i}\right):\ i=2,\ldots,d\right\} \cup\left\{ \check{T}_{\check{\mathfrak{b}}}\left(X^{\alpha}\right)\right\} $
are normal. Let $r^{\pm}:=r_{m+1}^{d!,\pm}\circ\ldots\circ r_{m+n}^{d!,\pm}$
and let $\check{G_{i}}:=\check{T}_{\check{\mathfrak{b}}}\circ r^{+}\left(G_{i}\right)$
(an argument analogous to the one which follows will hold for $r^{-}$).
By the inductive hypothesis and Remark \ref{rem:power of a unit},
there is an admissible tree $\tilde{T}$ (acting as the identity on
$Y_{n}$) such that, for every branch $\tilde{\mathfrak{b}}$ of $\tilde{T}$,
the series in the set $\left\{ \tilde{T}_{\tilde{\mathfrak{b}}}\left[\check{G_{i}}^{1/i}\right]:\ i=2,\ldots,d\right\} \cup\left\{ \check{T}_{\check{\mathfrak{b}}}\circ r^{+}\circ\tilde{T}_{\tilde{\mathfrak{b}}}\left(X^{\alpha}\right)\right\} $
are normal and linearly ordered by division i.e. $\tilde{T}_{\tilde{\mathfrak{b}}}\left[\check{G_{i}}^{1/i}\right]=\left(X'\hat{Y'}\right)^{\frac{\alpha_{i}}{i}}U_{i}\left(X',\hat{Y'}\right)$,
where $U_{i}$ are units and $\alpha_{i}=\left(\alpha_{i,1},\ldots,\alpha_{i,m+n-1}\right)\in[0,\infty)^{m'}\times\mathbb{N}^{n'-1}$,
with $i|\alpha_{i,m'+j}$ for all $j=1,\ldots,n'$. Since $\tilde{T}_{\tilde{\mathfrak{b}}}$
is a ring homomorphism, if we put $T_{\mathfrak{b}}:=\tau_{B}\circ\check{T}_{\check{\mathfrak{b}}}\circ r^{+}\circ\tilde{T}_{\tilde{\mathfrak{b}}}$,
we obtain
\[
T_{\mathfrak{b}}\left(G\right)=\hat{U}\left(X',Y'\right)Y_{n}'^{d}+\sum_{i=2}^{d}\left(X'\hat{Y'}\right)^{\alpha_{i}}\hat{U}_{i}\left(X',\hat{Y'}\right)Y_{n'}'^{d-i},
\]
 where $\hat{U}=\check{T}_{\check{\mathfrak{b}}}\circ r^{+}\circ\tilde{T}_{\tilde{\mathfrak{b}}}\left(U\right)$
and $\hat{U}_{i}=U_{i}^{i}$.

Let us rename $X'=X$ and $Y'=Y$. We are going to perform a series
of ramifications and blow-ups with the aim of decreasing the order
of $T_{\mathfrak{b}}\left(G\right)$ in the variable $Y_{n}$, possibly
after factoring out a monomial in the variables $\left(X,\hat{Y}\right)$. 

Let $l\in\left\{ 2,\ldots,d\right\} $ be maximal such that $\frac{\alpha_{l}}{l}\leq\frac{\alpha_{i}}{i}$
for all $i=2,\ldots,d$. 

We first consider the variables in the tuple $X$. Suppose that $X_{1}$
does appear in the monomial $\left(X\hat{Y}\right)^{\alpha_{l}}$
and let $\gamma=\frac{\alpha_{l,1}}{l}>0$. Notice that $d-i+\frac{\alpha_{1,i}}{\gamma}\geq d$
for all $i=2,\ldots d$. Let $\hat{X}=\left(X_{2},\ldots,X_{m}\right)$
and $\alpha_{i}'=\left(\alpha_{i,2},\ldots,\alpha_{i,m+n-1}\right)$.
We perform the ramification $r_{1}^{1/\gamma}$, so 
\[
T_{\mathfrak{b}}\circ r_{1}^{1/\gamma}\left(G\right)=\overline{U}\left(X,Y\right)Y_{n}^{d}+\sum_{i=2}^{d}\left(\hat{X}\hat{Y}\right)^{\alpha_{i}'}\overline{U}_{i}\left(X,\hat{Y}\right)X_{1}^{\alpha_{1,i}/\gamma}Y_{n}^{d-i},
\]

where $\overline{U}=r_{1}^{1/\gamma}\left(\hat{U}\right)$ and $\overline{U}_{i}=r_{1}^{1/\gamma}\left(\hat{U}_{i}\right)$. 

We consider the blow-up charts in the collection $\pi_{m+n,1}$. For
a better readability, in what follows we will still denote the variables
as $\left(X,Y\right)$ after blow-up transformation. 

\textbf{Case 1. $\lambda=\pm\infty$}

We will only treat the case $\lambda=+\infty$, the other case is
analogous. The transformation $\pi_{m+n,1}^{+\infty}$ maps $\left(X,Y\right)$
to $\left(X_{1}Y_{n},X_{2},\ldots,X_{m},Y\right)$. Then, 
\begin{align*}
T_{\mathfrak{b}}\circ r_{1}^{1/\gamma}\circ\pi_{m+n,1}^{+\infty}\left(G\right) & =\tilde{U}\left(X,Y\right)Y_{n}^{d}+\sum_{i=2}^{d}\left(\hat{X}\hat{Y}\right)^{\alpha_{i}'}\tilde{U}_{i}\left(X,Y\right)X_{1}^{\alpha_{1,i}/\gamma}Y_{n}^{d-i+\frac{\alpha_{1,i}}{\gamma}}\\
 & =Y_{n}^{d}\left[\tilde{U}\left(X,Y\right)+\sum_{i=2}\left(\hat{X}\hat{Y}\right)^{\alpha_{i}'}\tilde{U}_{i}\left(X,Y\right)X_{1}^{\alpha_{1,i}/\gamma}Y_{n}^{-i+\frac{\alpha_{1,i}}{\gamma}}\right]\\
 & =Y_{n}^{d}V\left(X,Y\right),
\end{align*}
where $\tilde{U}=\pi_{m+n,1}^{+\infty}\left(\overline{U}\right)$,
$\tilde{U}_{i}=\pi_{m+n,1}^{+\infty}\left(\overline{U_{i}}\right)$
and $V$ has the form $\tilde{U}+X_{1}^{\beta}B\left(X,Y\right)$,
for some $\beta\in\mathbb{K}^{>0}$ and $B\in\mathbb{R}\left\llbracket X^{*},Y\right\rrbracket $,
and hence is a unit. 

\textbf{Case 2.} $\lambda=0$

The transformation $\pi_{m+n,1}^{0}$ maps $\left(X,Y\right)$ to
$\left(X,\hat{Y},X_{1}Y_{n}\right)$. Then, 
\begin{align*}
T_{\mathfrak{b}}\circ r_{1}^{1/\gamma}\circ\pi_{m+n,1}^{0}\left(G\right) & =\tilde{U}\left(X,Y\right)X_{1}^{d}Y_{n}^{d}+\sum_{i=2}^{d}\left(\hat{X}\hat{Y}\right)^{\alpha_{i}'}\tilde{U}_{i}\left(X,Y\right)X_{1}^{d-i+\frac{\alpha_{1,i}}{\gamma}}Y_{n}^{d-i}\\
 & =X_{1}^{d}\left[\tilde{U}\left(X,Y\right)Y_{n}^{d}+\left(\hat{X}\hat{Y}\right)^{\alpha_{l}'}\tilde{U}_{l}\left(X,Y\right)Y_{n}^{d-l}\right.\\
 & \ \ \ \ \ \ \ \ \ \ \ \ \ \ \ \ \ \ \ \ \ \ \ \ \ \ \left.+\sum_{i=2,\ i\not=l}^{d}\left(\hat{X}\hat{Y}\right)^{\alpha_{i}'}\tilde{U}_{i}\left(X,Y\right)X_{1}^{-i+\frac{\alpha_{1,i}}{\gamma}}Y_{n}^{d-i}\right]\\
 & =X_{1}^{d}G_{0}\left(X,Y\right),
\end{align*}
where $\tilde{U}=\pi_{m+n,1}^{0}\left(\overline{U}\right)$ and $\tilde{U}_{i}=\pi_{m+n,1}^{0}\left(\overline{U_{i}}\right)$.
Notice that $X_{1}$ does not appear any more in the coefficient of
$Y_{n}^{d-l}$ in $G_{0}$. 

\textbf{Case 3.} $\lambda\not=0,\pm\infty$

The transformation $\pi_{m+n,1}^{\lambda}$ maps $\left(X,Y\right)$
to $\left(X,\hat{Y},X_{1}\left(\lambda+Y_{n}\right)\right)$. Then,
\begin{align*}
T_{\mathfrak{b}}\circ r_{1}^{1/\gamma}\circ\pi_{m+n,1}^{\lambda}\left(G\right) & =\tilde{U}\left(X,Y\right)X_{1}^{d}\left(\lambda+Y_{n}\right)^{d}+\sum_{i=2}^{d}\left(X'\hat{Y}\right)^{\alpha_{i}'}\tilde{U}_{i}\left(X,Y\right)X_{1}^{d-i+\frac{\alpha_{1,i}}{\gamma}}\left(\lambda+Y_{n}\right)^{d-i}\\
 & =X_{1}^{d}\left[\tilde{U}\left(X,Y\right)Y_{n}^{d}+d\lambda\tilde{U}\left(X,Y\right)Y_{n}^{d-1}+\sum_{i=2}^{d}B_{i}\left(X,\hat{Y}\right)Y_{n}^{d-i}\right]\\
 & =X_{1}^{d}G_{\lambda}\left(X,Y\right),
\end{align*}
where $\tilde{U}=\pi_{m+n,1}^{\lambda}\left(\overline{U}\right)$,
$\tilde{U}_{i}=\pi_{m+n,1}^{\lambda}\left(\overline{U_{i}}\right)$
and $B_{i}\in\mathbb{R}\left\llbracket X^{*},\hat{Y}\right\rrbracket $.
Notice that, thanks to the initial Tschirnhausen translation $\tau_{b}$,
$G_{\lambda}$ is regular in $Y_{n}$ of order at most $d-1$.

\medskip{}

Arguing in the same way for every variable of the tuple $X$ which
actually appears in the monomial $\left(X\hat{Y}\right)^{\alpha_{l}}$,
after factoring out a monomial in the variables $X$, either we monomialise
$G$ (case 1), or we reduce the order of regularity in $Y_{n}$ (case
3), or we eliminate the variables $X$ from the monomial $\left(X\hat{Y}\right)^{\alpha_{l}}$
(case 2). 

Next we perform a blow-up transformation to obtain similar results
in the variables $\hat{Y}$. If the variable $Y_{1}$ appears in the
monomial $\left(X\hat{Y}\right)^{\alpha_{l}}$, we perform the blow-up
transformation $\pi_{m+n,m+1}$. For the charts $\pi_{m+n,m+1}^{\infty}$
and $\pi_{m+n,m+1}^{\lambda}$ ($\lambda\in\mathbb{R}$), we obtain
the same result as in cases 1 and 3 respectively, namely either $G$
has become normal or, after factoring out a power of $Y_{1}$, the
order of regularity in $Y_{n}$ has decreased. For the chart $\pi_{m+n,m+1}^{0}$,
we obtain, after factoring out a power of $Y_{1}$, that the exponents
$\alpha_{i,m+1}$ of $Y_{1}$ each decrease by the quantity $i$,
and hence by repeating the process we can reduce to the case $\alpha_{l,1}=0$.
Hence, as in case 2, we have eliminated the variable $Y_{1}$ from
the monomial $\left(X\hat{Y}\right)^{\alpha_{l}}$. \medskip{}

Summing up, there is an admissible tree $\hat{T}$ such that, for
every branch $\hat{\mathfrak{b}}$ of $\hat{T}$, we have that $T_{\mathfrak{b}}\circ\hat{T}_{\hat{\mathfrak{b}}}\left(X^{\alpha}\right)$
is a monomial (in the variables $\left(X,Y\right)$) and $T_{\mathfrak{b}}\circ\hat{T}_{\hat{\mathfrak{b}}}\left(G\right)=\left(X\hat{Y}\right)^{\alpha_{\hat{\mathfrak{b}}}}V_{\hat{\mathfrak{b}}}\left(X,Y\right)G_{\hat{\mathfrak{b}}}\left(X,Y\right)$,
where $\alpha_{\hat{\mathfrak{b}}}\in[0,\infty)^{m}\times\mathbb{N}^{n-1}$,
$V_{\hat{\mathfrak{b}}}$ is a unit and $G_{\hat{\mathfrak{b}}}$
is either a monomial in $Y_{n}$ (case 1), or it is regular in $Y_{n}$
of order 1 (repeated use of cases 2 and 3 ). In this latter case,
we compose with a further Tschirnhausen translation in order to render
\foreignlanguage{english}{$G_{\hat{\mathfrak{b}}}\left(X,Y\right)$}
normal. This concludes the proof of Theorem \ref{thm: monomialisation}.
\end{proof}

\subsection{Monomialisation of germs\label{subsec:monomialisation of germs}}

Recall that $f\in\mathcal{A}_{m,n}$ is \emph{normal} if there exist
$\alpha\in\mathbb{A}^{m},\ N\in\mathbb{N}^{n}$ and $u\in\mathcal{A}_{m,n}$
such that $f\left(x,y\right)=x^{\alpha}y^{N}u\left(x,y\right)$ and
$u\left(0,0\right)\not=0$. 
\begin{rem}
\label{rem: f normal iff F normal}If $f\in\mathcal{A}_{m,n}$ then
it follows from the axioms in \ref{emp:properties of the morph} that
$f\left(x,y\right)$ is normal if and only if $\mathcal{\mathcal{T}}\left(f\right)\left(X,Y\right)$
is normal.\end{rem}
\begin{thm}
\label{thm: geom monomial}Let $f_{1},\ldots,f_{p}\in\mathcal{A}_{m,n}$.
Then there exist a polyradius $r'$ such that the $f_{j}$ have a
representative on $\hat{I}_{m,n,r'}$, and a finite family 
\[
\mathcal{F}=\left\{ \rho_{i}:\hat{I}_{m_{i},n_{i},r_{i}}\to\hat{I}_{m,n,r}\right\} _{i=1,\ldots,N}
\]
 of admissible transformations such that, for all $i=1,\ldots,N$,
the germs $f_{1}\circ\rho_{i},\ldots,f_{p}\circ\rho_{i}$ are normal
and linearly ordered by division, and 
\[
\hat{I}_{m,n,r'}\subseteq\bigcup_{i=1}^{N}\rho_{i}\left(\hat{I}_{m_{i},n_{i},r_{i}}\right).
\]
\end{thm}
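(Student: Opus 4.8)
The plan is to transport the formal monomialisation of Theorem~\ref{thm: monomialisation} to the germs $f_{1},\dots,f_{p}$ by means of quasianalyticity, and then to extract from the resulting charts a finite family that covers a small box. We may assume all $f_{j}\neq 0$. Put $F_{j}:=\mathcal{T}(f_{j})\in\mathbb{R}\llbracket X^{*},Y\rrbracket\setminus\{0\}\cap\mathrm{Im}(\mathcal{T})$ and apply Theorem~\ref{thm: monomialisation}: we obtain an admissible tree $T$ such that, for every branch $\mathfrak{b}=(\nu_{1},\dots,\nu_{N})$ of $T$, the series $T_{\mathfrak{b}}(F_{1}),\dots,T_{\mathfrak{b}}(F_{p})$ are normal and linearly ordered by division. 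Put $\rho_{\mathfrak{b}}:=\nu_{1}\circ\dots\circ\nu_{N}$; by Remark~\ref{Rem: admiss transf are in the algebra} this is an admissible transformation $\hat{I}_{m_{\mathfrak{b}},n_{\mathfrak{b}},r_{\mathfrak{b}}}\to\hat{I}_{m,n,r}$ (for suitable polyradii), with components in $\mathcal{A}_{m_{\mathfrak{b}},n_{\mathfrak{b}},r_{\mathfrak{b}}}$, inducing the algebra morphism $f\mapsto f\circ\rho_{\mathfrak{b}}$ on germs.

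To transfer this to germs, the point is that $\mathcal{T}$ intertwines $\rho_{\mathfrak{b}}$ with $T_{\mathfrak{b}}$, i.e.\ $\mathcal{T}(f_{j}\circ\rho_{\mathfrak{b}})=T_{\mathfrak{b}}(F_{j})$. By induction on $N$ this reduces to the identity $\mathcal{T}(g\circ\nu)=\mathcal{T}(g)\circ\nu$ for a single elementary transformation $\nu$ (Definition~\ref{Def: elementary transformations}) and germ $g\in\mathcal{A}$: here $g\circ\nu\in\mathcal{A}$ by Remark~\ref{rem: elemntary transf send quadrants to sectors} and $\mathcal{T}(g)\circ\nu\in\mathrm{Im}(\mathcal{T})$ by (the proof of) Lemma~\ref{Lem: elementary transf induce injective morph}, and the two coincide because in each case the relevant axiom of \ref{emp:properties of the morph} which produces $g\circ\nu$ also computes its image under $\mathcal{T}$ as the power-series composition described in Lemma~\ref{Lem: elementary transf induce injective morph} (Axiom~7 for blow-up charts, Axiom~1 for ramifications of the $x$-variables, the composition axiom Axiom~5 for $r_{m+i}^{d,\pm}$, $L_{i,c}$ and $\tau_{h}$). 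Granting this, $\mathcal{T}(f_{j}\circ\rho_{\mathfrak{b}})=T_{\mathfrak{b}}(F_{j})$ is a non-zero normal series, hence $f_{j}\circ\rho_{\mathfrak{b}}$ is normal by Remark~\ref{rem: f normal iff F normal} and injectivity of $\mathcal{T}$; and if $T_{\mathfrak{b}}(F_{i})\mid T_{\mathfrak{b}}(F_{j})$, writing $f_{i}\circ\rho_{\mathfrak{b}}=x^{\alpha_{i}}y^{N_{i}}u_{i}$ and $f_{j}\circ\rho_{\mathfrak{b}}=x^{\alpha_{j}}y^{N_{j}}u_{j}$ with $u_{i},u_{j}$ units, applying $\mathcal{T}$ forces $\alpha_{i}\leq\alpha_{j}$ and $N_{i}\leq N_{j}$ componentwise, so that $f_{j}\circ\rho_{\mathfrak{b}}=(f_{i}\circ\rho_{\mathfrak{b}})\,x^{\alpha_{j}-\alpha_{i}}y^{N_{j}-N_{i}}\,u_{j}u_{i}^{-1}$ with $u_{i}^{-1}\in\mathcal{A}_{m_{\mathfrak{b}},n_{\mathfrak{b}}}$ (a unit is invertible, by Axiom~6 of \ref{emp:properties of the morph} applied to $u_{i}(x,y)(1+z)-1$; cf.\ Remarks~\ref{rems: properties of the algebras}). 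Thus the germs $f_{j}\circ\rho_{\mathfrak{b}}$ are normal and linearly ordered by division.

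It remains to arrange the inclusion $\hat{I}_{m,n,r'}\subseteq\bigcup\rho_{\mathfrak{b}}(\hat{I}_{m_{\mathfrak{b}},n_{\mathfrak{b}},r_{\mathfrak{b}}})$, and this is the main obstacle. The tree $T$ may have infinitely many branches (a blow-up node in Theorem~\ref{thm: monomialisation} involves a continuum of charts $\pi_{l,s}^{\lambda}$), but it contains at every blow-up node the charts with $\lambda\in\{0,\infty\}$ (resp.\ $\{0,\pm\infty\}$), and for suitably large source polyradii these finitely many charts already cover a box around the origin — e.g.\ $\pi_{i,j}^{0}$ and $\pi_{i,j}^{\infty}$ cover a neighbourhood of $0$ in the $(x_{i},x_{j})$-quadrant by splitting on whether $x_{i}\leq x_{j}$ or $x_{j}\leq x_{i}$, and the other two types of blow-up are handled analogously by splitting on the size of the relevant ratio; a ramification of the first type and each pair $r_{m+i}^{d,+},r_{m+i}^{d,-}$ are (collectively) surjective onto boxes, and $L_{i,c},\tau_{h}$ are bijective. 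One then runs an induction on the height $\mathrm{h}(T)$: at each node one retains only these finitely many child transformations and shrinks the current box so that all functions constructed so far admit representatives on it and the retained child transformations — applied to the sub-boxes furnished by the inductive hypothesis — still cover it. This produces a finite set $\mathcal{B}$ of branches of $T$ and a polyradius $r'$ on which the $f_{j}$ have representatives and with $\hat{I}_{m,n,r'}\subseteq\bigcup_{\mathfrak{b}\in\mathcal{B}}\rho_{\mathfrak{b}}(\hat{I}_{m_{\mathfrak{b}},n_{\mathfrak{b}},r_{\mathfrak{b}}})$; taking $\mathcal{F}=\{\rho_{\mathfrak{b}}:\mathfrak{b}\in\mathcal{B}\}$ completes the argument. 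The delicate part is exactly this polyradius bookkeeping: since an elementary transformation covers only a \emph{sub}-box of its target, the successive shrinkings interact and must be organised consistently; by contrast the passage from the formal statement to the geometric one is essentially automatic once Theorem~\ref{thm: monomialisation} and quasianalyticity are in hand.
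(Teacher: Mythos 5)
Your overall strategy is the same as the paper's: apply Theorem~\ref{thm: monomialisation} to the series $\mathcal{T}(f_j)$, transport the conclusion to germs via the compatibility of $\mathcal{T}$ with elementary transformations (this is what makes $f_j\circ\rho_{\mathfrak{b}}$ normal via Remark~\ref{rem: f normal iff F normal} and injectivity of $\mathcal{T}$), and then run an induction on the height of the monomialising tree to extract a finite covering family. The paper phrases the first reduction slightly differently (it reduces to a single germ via Lemma~\ref{lem: prod of series} rather than applying Theorem~\ref{thm: monomialisation} directly to the tuple), but that is cosmetic; your verification of the divisibility transfer and the identity $\mathcal{T}(f_j\circ\rho_{\mathfrak{b}})=T_{\mathfrak{b}}(\mathcal{T}(f_j))$ is correct.

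There is, however, a genuine gap in the covering step. You claim that at a blow-up node it is enough to retain the charts with $\lambda\in\{0,\infty\}$ (resp.\ $\{0,\pm\infty\}$), e.g.\ that $\pi^{0}_{i,j}$ and $\pi^{\infty}_{i,j}$ cover a box ``by splitting on $x_i\leq x_j$ or $x_j\leq x_i$''. But $\pi^{0}_{i,j}$ applied to a source box $\hat I_{m,n,r'''}$ covers only the region $\{x_i< s'''_i\, x_j\}$, so covering the half $\{x_i\leq x_j\}$ requires $s'''_i\geq 1$; similarly for $\pi^{\infty}_{i,j}$. In the induction the source polyradii for the two children are not chosen by you: they are the polyradii $r'_\nu$ furnished by the inductive hypothesis applied to $f\circ\pi^0$ and $f\circ\pi^\infty$, which may well have all components $<1$. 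If, say, both relevant components are $1/10$, then $\pi^{0}(\hat I(r'_0))\cup\pi^{\infty}(\hat I(r'_\infty))$ covers only the slivers $\{x_i<\tfrac{1}{10}x_j\}\cup\{x_j<\tfrac{1}{10}x_i\}$, and no shrinking of the \emph{target} box can recover the missing wedge $\tfrac{1}{10}\leq x_i/x_j\leq 10$, since the defining inequalities are scale-invariant. The paper's remedy (the unnumbered remark following the statement, together with the cited compactness argument of \cite[p.~4406]{vdd:speiss:gen}) is essential: one must first note that the union over \emph{all} $\lambda\in[0,\infty]$ of the images $\pi^\lambda(\hat I(r'_\lambda))$ covers a box (the cones around $x_i=\lambda x_j$ fill the open quadrant, while $\pi^0,\pi^\infty$ cover neighbourhoods of the axes), and only then, by compactness of the relevant range of ratios, extract a \emph{finite} family of parameters $\lambda$ — which in general will include values in $(0,\infty)$ beyond $\{0,\infty\}$, chosen \emph{after} seeing the $r'_\lambda$. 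Apart from this point your proof is sound, and the fix drops in cleanly at exactly the spot you flag as ``delicate''.
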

\begin{rem}
Let $\nu:\hat{I}_{m',n',r'}\to\hat{I}_{m,n,r}$ be an elementary transformation.
Notice that for every polyradius $r'''\leq r'$ there exists a polyradius
$r''\leq r$ such that:
\begin{itemize}
\item if $\nu$ is either $\tau_{h}$, or $L_{i,c}$, or $r_{i}^{\gamma}$,
then $\hat{I}_{m,n,r''}\subseteq\nu\left(\hat{I}_{m,n,r'''}\right)$;
\item $\hat{I}_{m,n,r''}\subseteq r_{m+i}^{d,+}\left(\hat{I}_{m,n,r'''}\right)\cup r_{m+i}^{d,-}\left(\hat{I}_{m,n,r'''}\right)$;
\item $\hat{I}_{m,n,r''}\subseteq\bigcup_{\lambda\in\left(0,\infty\right)}\pi_{i,j}^{\lambda}\left(\hat{I}_{m-1,n+1,r'''}\right)\cup\pi_{i,j}^{0}\left(\hat{I}_{m,n,r'''}\right)\cup\pi_{i,j}^{\infty}\left(\hat{I}_{m,n,r'''}\right);$
\item $\hat{I}_{m,n,r''}\subseteq\bigcup_{\lambda\in\mathbb{R}}\pi_{m+i,j}^{\lambda}\left(\hat{I}_{m,n,r'''}\right)\cup\pi_{m+i,j}^{+\infty}\left(\hat{I}_{m+1,n-1,r'''}\right)\cup\pi_{m+i,j}^{-\infty}\left(\hat{I}_{m+1,n-1,r'''}\right);$
\item $\hat{I}_{m,n,r''}\subseteq\bigcup_{\lambda\in\mathbb{R}\cup\left\{ \infty\right\} }\pi_{m+i,m+j}^{\lambda}\left(\hat{I}_{m,n,r'''}\right).$
\end{itemize}
Moreover, by a compactness argument as in \cite[p. 4406]{vdd:speiss:gen},
it is easy to see that the three last inclusions remain true for suitable
finite families of parameters $\lambda$.\end{rem}
\begin{proof}
[Proof of Theorem \ref{thm: geom monomial}]Notice first that, using
Lemma \ref{lem: prod of series} and Remark \ref{rem: f normal iff F normal},
we only need to prove the statement for one germ $f\in\mathcal{A}_{m,n}$.
By Theorem \ref{thm: monomialisation} and by compatibility of $\mathcal{T}$
with the operations in \ref{emp:properties of the morph}, there exists
an admissible tree $T$ such that for each branch $\mathfrak{b}$
of $T$, the germ $f\circ\rho_{\mathfrak{b}}$ is normal. We prove
a stronger statement, namely that the admissible transformations in
the statement are in fact induced by branches of $T$. The proof is
by induction on the height of $T$. If the height of $T$ is zero,
then $f$ is normal. Hence, let us assume $T$ has positive height,
consider the initial vertex of $T$ and the elementary tree $T_{0}$
immediately attached to it. 

Let $\nu:\hat{I}_{m_{\nu},n_{\nu},r_{\nu}}\to\hat{I}_{m,n,r}$ be
an elementary transformation induced by a branch of $T_{0}$ (i.e.
$\nu$ is either a Tschirnhausen translation, or a linear transformation,
or a ramification, or it is a blow-up chart). The germ $g^{\nu}=f\circ\nu$
belongs to $\mathcal{A}_{m_{\nu},n_{\nu}}$ and can be monomialised
by a tree $T'$ with the property that, for every branch $\mathfrak{b}^{'}$
of $T'$, there exists a unique branch $\mathfrak{b}$ of $T$ such
that $\rho_{\mathfrak{b}}=\nu\circ\rho_{\mathfrak{b}^{'}}$. Since
the height of $T'$ is smaller than the height of $T$, the inductive
hypothesis applies and there are a polyradius $r_{\nu}'$ and a finite
family $\mathcal{F}^{\nu}=\left\{ \rho_{i}^{\nu}:\hat{I}_{m_{i}^{\nu},n_{i}^{\nu},r_{i}^{\nu}}\to\hat{I}_{m_{\nu},n_{\nu},r_{\nu}}\right\} _{i=1,\ldots,N_{\nu}}$
as in the statement of the theorem such that the germs $g^{\nu}\circ\rho_{i}^{\nu}$
are normal and $\hat{I}_{m_{\nu},n_{\nu},r_{\nu}'}\subseteq\bigcup_{i=1}^{N_{\nu}}\rho_{i}^{\nu}\left(\hat{I}_{m_{i}^{\nu},n_{i}^{\nu},r_{i}^{\nu}}\right)$.
Since the admissible transformations $\nu\circ\rho_{i}^{\nu}$ are
induced by branches of $T$, we have that the germs $f\circ\nu\circ\rho_{i}^{\nu}$
are normal and $\nu\left(\hat{I}_{m_{\nu},n_{\nu},r_{\nu}'}\right)\subseteq\bigcup_{i=1}^{N_{\nu}}\nu\circ\rho_{i}^{\nu}\left(\hat{I}_{m_{i}^{\nu},n_{i}^{\nu},r_{i}^{\nu}}\right)$.

Now, by Remark \ref{rem: elemntary transf send quadrants to sectors},
there are a polyradius $r'$ and a finite collection $\mathcal{G}$
of branches of $T_{0}$ such that $\hat{I}_{m,n,r'}\subseteq{\displaystyle \bigcup_{\nu\in\mathcal{G}}}\nu\left(\hat{I}_{m_{\nu},n_{\nu},r_{\nu}'}\right)$,
and we are done.
\end{proof}

\section{A parametrisation theorem\label{sec:Parametrisation-of--subanalytic-1}}

The main result of this section is Theorem \ref{thm: param subanal},
which essentially provides a way to parametrise every bounded $\mathbb{R}_{\mathcal{A}}$-definable
set by means of maps whose components are in $\mathcal{A}$. This
provides the main step in the proof of Theorem A, which will be completed
at the end of this section, and a key tool for proving Theorem B,
in the next section. 
\begin{defn}
\label{def:semi and subanalytic sets}

A set $A\subset\hat{I}_{m,n,r}$ is said to be \emph{$\mathcal{A}_{m,n}$-basic}
if it is a finite union of sets of the form 
\[
\{(x,y)\in\hat{I}_{m,n,r}:\ g_{0}(x,y)=0,g_{1}(x,y)>0,\dots,g_{k}(x,y)>0\},
\]
 where $g_{0},\ldots,g_{k}\in\mathcal{A}_{m,n,r}$.

A set $A\subset\mathbb{R}^{m+n}$ is said to be \emph{$\mathcal{A}_{m,n}$-semianalytic}
if for every point $a\in\mathbb{R}^{m+n}$ there exists $r_{a}\in\zeroapinf^{m+n}$
such that for every choice of signs $\sigma=(\sigma_{1},\dots,\sigma_{m})\in\left\{ -1,1\right\} ^{m}$,
there exists an $\mathcal{A}_{m,n}$-basic set $A_{a,\sigma}\subset\hat{I}_{m,n,r_{a}}$
with 
\[
A\cap(h_{a,\sigma}(\hat{I}_{m,n,r_{a}}))=h_{a,\sigma}(A_{a,\sigma}),
\]
 where $h_{a,\sigma}(x,y)=(\sigma_{1}x_{1}+a_{1},\dots,\sigma_{m}x_{m}+a_{m},y_{1}+a_{m+1},\ldots,y_{n}+a_{m+n})$.

We will simply say that a set is $\mathcal{A}$-basic or $\mathcal{A}$-semianalytic
when the indices $m,n$ are either obvious from the context or irrelevant. \end{defn}
\begin{rem}
\label{rem: basic is semi}Notice that since all functions in $\mathcal{A}$
are $\mathcal{A}$-analytic (see Definition \ref{def: A-analytic}),
an $\mathcal{A}$-basic set is also $\mathcal{A}$-semianalytic.\end{rem}
\begin{defn}
\label{def:quadrant}Let $r=\left(r_{1},\ldots r_{m+n}\right)\in\left(0,\infty\right)^{m+n}$
be a polyradius. A set $Q\subseteq\hat{I}_{m,n,r}$ of the form $B_{1}\times\ldots\times B_{m}$,
where $B_{i}$ is either $\left\{ 0\right\} $, or $\left(-r_{i},0\right)$,
or $\left(0,r_{i}\right)$, is called a \emph{sub-quadrant} of $\mathbb{R}^{m+n}$.
The cardinality of the set $C:=\left\{ i:\ B_{i}\not=\left\{ 0\right\} \right\} $,
denoted by $\mathrm{dim}\left(Q\right)$, is called the \emph{dimension
}of $Q$. Notice that $\hat{I}_{m,n,r}$ is a finite union of sub-quadrants. 
\end{defn}

The following proposition states that the germ at zero of an $\mathcal{A}$-basic
subset of $\mathbb{R}^{m+n}$ can be transformed into a finite union
of sub-quadrants of $\mathbb{R}^{m+n}$ by means of a finite family
of admissible transformations.
\begin{prop}
\label{prop: param basic sets}Let $A\subseteq\hat{I}_{m,n,r}$ be
an $\mathcal{A}_{m,n}$-basic set. Then there exist a neighbourhood
$W$ of 0 in $\mathbb{R}^{m+n}$ and a finite family $\mathcal{F}=\left\{ \left(\rho_{i},Q_{i}\right):\ i=1,\ldots,N\right\} $,
where $\rho_{i}:\hat{I}_{m_{i}',n_{i}',r'_{i}}\to\hat{I}_{m,n,r}$
is an admissible transformation and $Q_{i}\subseteq\overline{Q_{i}}\subseteq\hat{I}_{m_{i}',n_{i}',r_{i}'}$
is a sub-quadrant, such that $\rho_{i}\restriction Q_{i}:\ Q_{i}\to\rho_{i}\left(Q_{i}\right)$
is a diffeomorphism and 
\[
W\cap A=\bigcup_{i=1}^{N}\rho_{i}\left(Q_{i}\right).
\]
\end{prop}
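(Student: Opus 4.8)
The plan is to reduce the statement to the monomialisation results already established (Theorems \ref{thm: monomialisation} and \ref{thm: geom monomial}) applied to the functions $g_0,\ldots,g_k$ cutting out $A$, and then to analyse the sign conditions on the normal functions obtained after pulling back.

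First I would recall that, by definition, an $\mathcal{A}_{m,n}$-basic set $A$ is a finite union of sets of the form $\{(x,y):\ g_0=0,\ g_1>0,\ldots,g_k>0\}$ with $g_j\in\mathcal{A}_{m,n,r}$; since the desired conclusion (a finite family of admissible transformations whose images cover $W\cap A$) is clearly closed under finite unions, I may assume $A$ itself is a single such set. Next I would apply Theorem \ref{thm: geom monomial} to the finite tuple $g_0,\ldots,g_k$: this yields a polyradius $r'$ and a finite family $\{\rho_i:\hat I_{m_i,n_i,r_i}\to\hat I_{m,n,r}\}$ of admissible transformations such that each $g_j\circ\rho_i$ is normal, the $g_j\circ\rho_i$ are linearly ordered by division, and $\hat I_{m,n,r'}\subseteq\bigcup_i\rho_i(\hat I_{m_i,n_i,r_i})$. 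Take $W$ to be a neighbourhood of $0$ with $W\cap\hat I_{m,n,r}\subseteq\hat I_{m,n,r'}$ (after intersecting with the relevant coordinate half-spaces as in Definition \ref{def:semi and subanalytic sets}); then $W\cap A=\bigcup_i\rho_i\bigl(\rho_i^{-1}(W)\cap(g_0\circ\rho_i=0,\ g_1\circ\rho_i>0,\ldots)\bigr)$, so it suffices to treat one $\rho_i$ and understand the set $A_i:=\{(x',y')\in\hat I_{m_i,n_i,r_i}:\ g_0\circ\rho_i=0,\ g_j\circ\rho_i>0\ (j\geq1)\}$.

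Now comes the core of the argument: describing $A_i$ in terms of sub-quadrants. Write $g_j\circ\rho_i=(x')^{\alpha_j}(y')^{N_j}u_j$ with $u_j$ a unit, $u_j(0)\neq0$. Shrinking $r_i$ so that each $u_j$ has constant sign $\varepsilon_j\in\{\pm1\}$ on $\hat I_{m_i,n_i,r_i}$, the condition $g_j\circ\rho_i>0$ becomes $\varepsilon_j\,(x')^{\alpha_j}(y')^{N_j}>0$, which on $\hat I_{m_i,n_i,r_i}$ translates into a condition on the signs of those $y'$-coordinates appearing with odd exponent in $N_j$ (the $x'$ are nonnegative so contribute nothing), together with the requirement that the relevant coordinates be nonzero; and the condition $g_0\circ\rho_i=0$ (where $g_0\circ\rho_i=(x')^{\alpha_0}(y')^{N_0}u_0$ with $u_0$ a nonvanishing unit) says exactly that one of the coordinates appearing in the monomial $(x')^{\alpha_0}(y')^{N_0}$ vanishes (or, if $g_0\equiv0$, the condition is vacuous, and if $\alpha_0=N_0=0$ then $A_i=\emptyset$). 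All of these are unions of sign-and-vanishing conditions on the coordinates, hence $A_i$ is a finite union of sub-quadrants $Q$ of $\hat I_{m_i,n_i,r_i}$ (with $\overline Q\subseteq\hat I_{m_i,n_i,r_i}$ after one more shrinking). On each such $Q$, the restriction $\rho_i\restriction Q$ is a composition of the restrictions of the elementary transformations making up $\rho_i$; since each elementary transformation (blow-up chart, Tschirnhausen translation, linear map, ramification) restricts to a diffeomorphism onto its image away from the coordinate hyperplanes it blows up — and $Q$ is chosen disjoint from those hyperplanes — the composite $\rho_i\restriction Q$ is a diffeomorphism onto $\rho_i(Q)$. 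Relabelling the pairs $(\rho_i\restriction\text{(ambient of }Q),Q)$ gives the desired family $\mathcal{F}$.

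The step I expect to be the main obstacle is the diffeomorphism claim: one must verify carefully that, once the sub-quadrant $Q$ is chosen to avoid the coordinate hyperplanes involved in the blow-ups (and the ramifications, which are $t\mapsto t^\gamma$ and injective on $(0,\infty)$, and the $\pm$-ramifications $y\mapsto\pm y^d$, handled by the sign choice), each elementary transformation in the chain is a genuine diffeomorphism onto its image, and that the domain of the next transformation in the chain contains that image — this is where one uses that admissible transformations are built so that the intermediate polyradii match up (Definition \ref{Def: admiss transf}), and a small shrinking of $r_i$ may be needed at each stage. A secondary bookkeeping point is to make the single neighbourhood $W$ work simultaneously for all the finitely many $\rho_i$ and for all the sign choices in Definition \ref{def:semi and subanalytic sets}, which is routine once one takes $W$ small enough. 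Everything else — the reduction to a single basic piece, the pullback of equalities and inequalities, the translation of $g_j\circ\rho_i>0$ into coordinate-sign conditions — is straightforward once the normal forms from Theorem \ref{thm: geom monomial} are in hand.
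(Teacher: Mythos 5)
Your overall route --- apply the geometric monomialisation (Theorem \ref{thm: geom monomial}) to the defining functions $g_0,\ldots,g_k$, pull back, and read off sub-quadrants from the resulting normal forms --- is the same as the paper's, but there is a genuine gap in the diffeomorphism step, and it is not a piece of bookkeeping.

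You assert, after observing that $A_i:=\rho_i^{-1}(A)\cap\hat I_{m_i,n_i,r_i}$ is a finite union of sub-quadrants, that these sub-quadrants are ``chosen disjoint from the coordinate hyperplanes blown up by $\rho_i$.'' This is in general not possible. The sub-quadrants arising from the equality $g_0\circ\rho_i=0$ are precisely those on which some coordinate occurring in the monomial $(x')^{\alpha_0}(y')^{N_0}$ vanishes: they are forced to lie inside coordinate hyperplanes, and nothing in your argument prevents those hyperplanes from being the critical loci of the blow-up charts and ramifications composing $\rho_i$. On such a sub-quadrant $\rho_i$ collapses a positive-dimensional set and cannot restrict to a diffeomorphism. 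Concretely, take $m=2$, $n=0$, $g_0(x_1,x_2)=x_1-x_2$. The blow-up chart $\pi_{2,1}^{1}\colon(x_1',y_1')\mapsto(x_1',x_1'(1+y_1'))$ monomialises $g_0$ (one gets $g_0\circ\pi_{2,1}^{1}=-x_1'y_1'$), and the analysis of $g_0\circ\pi_{2,1}^{1}=0$ produces the sub-quadrant $Q=\{0\}\times(0,r')$, on which $\pi_{2,1}^{1}$ is constant equal to $(0,0)$, a point of $A$ --- so this $Q$ really does occur in your decomposition, yet $\pi_{2,1}^{1}\restriction Q$ is not a diffeomorphism.

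The paper avoids exactly this by running an \emph{outer} induction on the ambient dimension $m+n$: it sets $S=\bigcup_i\{x_i=0\}\cup\bigcup_j\{y_j=0\}$, handles $A\cap S$ via the inductive hypothesis in a strictly lower-dimensional ambient space, and then works only with $A':=A\setminus S$. Since every elementary transformation sends its critical (non-bijective or non-differentiable) locus into $S$, the pullback $\rho^{-1}(A')$ is automatically disjoint from the critical loci of $\rho$; the sub-quadrants it decomposes into may still lie on \emph{non-critical} hyperplanes (e.g.\ $\{y'=0\}$ for a chart $\pi^\lambda$ with $\lambda\neq0$), but on those $\rho$ is a diffeomorphism. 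You would need to add exactly this dimension induction and the splitting $A=(A\cap S)\cup A'$ for your argument to close; as written, the disjointness of $Q$ from the critical hyperplanes is an unverified and, in general, false assumption.
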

\begin{proof}
The proof is by induction on the dimension $m+n$ of the ambient space.
Let $S:=\left\{ x_{1}=0\right\} \cup\ldots\cup\left\{ x_{m}=0\right\} \cup\left\{ y_{1}=0\right\} \cup\ldots\cup\left\{ y_{n}=0\right\} $.
The set $A\cap S$ is an $\mathcal{A}$-basic sets contained in an
ambient space of dimension strictly smaller than $m+n$, hence, by
the inductive hypothesis, it is sufficient to prove the proposition
for the set $A':=A\setminus S$. We may assume that $A'$ is of the
following form: 
\[
A'=\left\{ \left(x,y\right)\in\hat{I}_{m,n,r}:\ f_{0}\left(x,y\right)=0\wedge f_{1}\left(x,y\right)>0\wedge\ldots\wedge f_{p}\left(x,y\right)>0\right\} ,
\]
where $f_{i}\in\mathcal{A}_{m,n}$. 

By Theorem \ref{thm: monomialisation}, there is an admissible tree
$T$ such that for each branch $\mathfrak{b}$ of $T$, the germs
$f_{i}\circ\rho_{\mathfrak{b}}$ are all normal. We prove by induction
on the height of $T$ that the admissible transformations which parametrise
$A'$ are in fact induced by branches of $T$. If the height of $T$
is zero, then all the $f_{i}$ are normal and $A'$ is a sub-quadrant
of $\hat{I}_{m,n,r}$, so for some box around zero $W$, the closure
of the sub-quadrant $W\cap A'$ is contained in $\hat{I}_{m,n,r}$.
Hence, let us assume $T$ has positive height, consider the initial
vertex of $T$ and the elementary tree $T_{0}$ immediately attached
to it. 

Let $\nu:\hat{I}_{m_{\nu},n_{\nu},r_{\nu}}\to\hat{I}_{m,n,r}$ be
an elementary transformation induced by a branch of $T_{0}$. Consider
the germs $g_{i}^{\nu}=f_{i}\circ\nu\in$$\mathcal{A}_{m_{\nu},n_{\nu}}$.
The set

\[
A_{\nu}=\left\{ \left(x,y\right)\in\hat{I}_{m_{\nu},n_{\nu},r_{\nu}}:\ g_{0}^{\nu}\left(x,y\right)=0\wedge g_{1}^{\nu}\left(x,y\right)>0\wedge\ldots\wedge g_{p}^{\nu}\left(x,y\right)>0\right\} 
\]
is $\mathcal{A}_{m_{\nu},n_{\nu}}$-basic and the germs $g_{i}^{\nu}$
can be monomialised by a tree $T'$ with the property that, for every
branch $\mathfrak{b}^{'}$ of $T'$, there exists a unique branch
$\mathfrak{b}$ of $T$ such that $\rho_{\mathfrak{b}}=\nu\circ\rho_{\mathfrak{b}^{'}}$.
Since the height of $T'$ is smaller than the height of $T$, the
inductive hypothesis applies and there are a neighbourhood $W_{\nu}$
of 0 in $\mathbb{R}^{m+n}$ and a finite family $\mathcal{F}_{\nu}=\left\{ \left(\rho_{i}^{\nu},Q_{i}^{\nu}\right):\ i=1,\ldots,N_{\nu}\right\} $
as in the statement such that the $\rho_{i}^{\nu}$ are induced by
branches of $T'$ and $W_{\nu}\cap A_{\nu}=\bigcup_{i=1}^{N_{\nu}}\rho_{i}^{\nu}\left(Q_{i}^{\nu}\right)$.
By Remark \ref{rem: elemntary transf send quadrants to sectors},
there are a polyradius $r'$ and a finite collection $\mathcal{G}$
of branches of $T_{0}$ such that $\hat{I}_{m,n,r'}\subseteq{\displaystyle \bigcup_{\nu\in\mathcal{G}}}\nu\left(\hat{I}_{m_{\nu},n_{\nu},r_{\nu}}\cap W_{\nu}\right)\subseteq\hat{I}_{m,n,r}$.
Hence, there is a neighbourhood $W$ of zero such that ${\displaystyle \bigcup_{\nu\in\mathcal{G}}}\nu\left(\hat{I}_{m_{\nu},n_{\nu},r_{\nu}}\cap W_{\nu}\right)=\hat{I}_{m,n,r}\cap W$.
It follows that 

\[
A'\cap W=A'\cap{\displaystyle \bigcup_{\nu\in\mathcal{G}}}\nu\left(\hat{I}_{m_{\nu},n_{\nu},r_{\nu}}\cap W_{\nu}\right)={\displaystyle \bigcup_{\nu\in\mathcal{G}}}\nu\left(A_{\nu}\cap W_{\nu}\right)={\displaystyle \bigcup_{\nu\in\mathcal{G}}}\bigcup_{i=1}^{N_{\nu}}\nu\circ\rho_{i}^{\nu}\left(Q_{i}^{\nu}\right).
\]
We claim that $\nu\restriction A_{\nu}$ is a diffeomorphism onto
its image (and hence so is $\nu\circ\rho_{i}^{\nu}\restriction Q_{i}^{\nu}$).
This is clear if $\nu$ is either of type $L_{i,c},\ \text{or}\ \tau_{h}$,
or $r_{m+i}^{d,\pm}$. If $\nu=r_{i}^{\gamma}$, then $A_{\nu}\cap\left\{ x_{i}'=0\right\} =\emptyset$
(because $A'\cap S=\emptyset$). For the same reason, if $\nu=\pi_{m+i,j}^{\lambda}$
then $A_{\nu}\cap\left\{ x_{j}'=0\right\} =\emptyset$, and similarly
for the other blow-up charts. Summing up, $A_{\nu}$ does not meet
the subset of $\hat{I}_{m_{\nu},n_{\nu},r_{\nu}}$ on which $\nu$
is either not bijective or not differentiable. \end{proof}
\begin{cor}
\label{cor: param semi-anal}Let $A\subseteq\mathbb{R}^{m+n}$ be
a bounded $\mathcal{A}_{m,n}$-semianalytic set. Then there exists
a finite family $\mathcal{F}=\left\{ \left(\rho_{i},Q_{i},a_{i},\sigma_{i}\right):\ i=1,\ldots,N\right\} $,
where $a_{i}\in\mathbb{R}^{m+n}$, $\sigma_{i}\in\left\{ -1,1\right\} ^{m}$,
$\rho_{i}:\hat{I}_{m_{i}',n_{i}',r'_{i}}\to\hat{I}_{m,n,r}$ is an
admissible transformation and $Q_{i}\subseteq\hat{I}_{m_{i}',n_{i}',r_{i}'}$
is a sub-quadrant, such that $h_{a_{i},\sigma_{i}}\circ\rho_{i}\restriction Q_{i}:\ Q_{i}\to h_{a_{i},\sigma_{i}}\circ\rho_{i}\left(Q_{i}\right)$
is a diffeomorphism and 
\[
A=\bigcup_{i=1}^{N}h_{a_{i},\sigma_{i}}\circ\rho_{i}\left(Q_{i}\right),
\]
where $h_{a_{i},\sigma_{i}}$ is as in Definition \ref{def:semi and subanalytic sets}. \end{cor}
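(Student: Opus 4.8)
The plan is to reduce Corollary \ref{cor: param semi-anal} to Proposition \ref{prop: param basic sets} by a straightforward localisation-and-compactness argument, exploiting the fact that a bounded $\mathcal{A}_{m,n}$-semianalytic set is, by definition, locally (after translation and reflection of the $x$-variables) an $\mathcal{A}$-basic set. First I would recall the defining property: for each $a\in\mathbb{R}^{m+n}$ there is a polyradius $r_a$ and, for each sign vector $\sigma\in\{-1,1\}^m$, an $\mathcal{A}_{m,n}$-basic set $A_{a,\sigma}\subseteq\hat{I}_{m,n,r_a}$ with $A\cap h_{a,\sigma}(\hat{I}_{m,n,r_a})=h_{a,\sigma}(A_{a,\sigma})$. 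Applying Proposition \ref{prop: param basic sets} to each $A_{a,\sigma}$ produces a neighbourhood $W_{a,\sigma}$ of $0$ and a finite family of pairs $(\rho^{a,\sigma}_i,Q^{a,\sigma}_i)$ of admissible transformations and sub-quadrants with $\rho^{a,\sigma}_i\restriction Q^{a,\sigma}_i$ a diffeomorphism onto its image and $W_{a,\sigma}\cap A_{a,\sigma}=\bigcup_i \rho^{a,\sigma}_i(Q^{a,\sigma}_i)$. Shrinking so that $W:=\bigcap_\sigma W_{a,\sigma}$ works simultaneously for all $2^m$ sign vectors, and noting that the images $h_{a,\sigma}(\hat I_{m,n,r_a}\cap W)$ as $\sigma$ ranges over $\{-1,1\}^m$ cover a full neighbourhood $V_a$ of $a$ in $\mathbb{R}^{m+n}$ (up to the coordinate hyperplanes through $a$, which are handled by the lower-dimensional sign choices or absorbed since we only need a cover), I obtain $V_a\cap A=\bigcup_{\sigma}\bigcup_i h_{a,\sigma}\circ\rho^{a,\sigma}_i(Q^{a,\sigma}_i)$ — a finite union of the desired form with this particular $a$ and the various $\sigma$'s.

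Next I would invoke boundedness: $\overline{A}$ is compact, so the open cover $\{V_a:a\in\overline{A}\}$ admits a finite subcover $V_{a_1},\dots,V_{a_k}$. Since $A\subseteq\overline A\subseteq\bigcup_{l=1}^k V_{a_l}$, we get $A=\bigcup_{l=1}^k (V_{a_l}\cap A)$, and substituting the local descriptions yields $A$ as a finite union of sets $h_{a_l,\sigma}\circ\rho(Q)$ with each composite $h_{a_l,\sigma}\circ\rho\restriction Q$ a diffeomorphism onto its image (the diffeomorphism property is preserved because $h_{a,\sigma}$ is an affine bijection of $\mathbb{R}^{m+n}$). Relabelling the finitely many triples $(a_l,\sigma)$ and the associated $(\rho,Q)$ by a single index $i=1,\dots,N$ gives exactly the family $\mathcal{F}=\{(\rho_i,Q_i,a_i,\sigma_i)\}$ claimed in the statement, with $a_i\in\mathbb{R}^{m+n}$, $\sigma_i\in\{-1,1\}^m$, $\rho_i$ admissible, $Q_i$ a sub-quadrant, and $h_{a_i,\sigma_i}\circ\rho_i\restriction Q_i$ a diffeomorphism onto $h_{a_i,\sigma_i}\circ\rho_i(Q_i)$.

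The one point that needs a little care — and the only candidate for a genuine obstacle — is bookkeeping the coordinate hyperplanes through each $a$: $h_{a,\sigma}$ only maps into the open "octant" $\{x_1>0,\dots,x_m>0\}$ translated to $a$, so the union over $\sigma\in\{-1,1\}^m$ of the $h_{a,\sigma}(\hat I_{m,n,r_a})$ misses the set $\{x_j=a_j \text{ for some } j\le m\}$ near $a$. The clean fix is to note, exactly as is done implicitly throughout Section \ref{sec:Parametrisation-of--subanalytic-1} and in Proposition \ref{prop: param basic sets} itself (where $A\cap S$ is treated by induction on dimension), that the portion of $A$ lying in these hyperplanes is an $\mathcal{A}_{m',n'}$-semianalytic set in strictly fewer variables, so by induction on $m+n$ it too admits a parametrisation of the required type; one then only needs to observe that a sub-quadrant of a lower-dimensional coordinate subspace, composed with the inclusion, is still (the image under an admissible transformation — indeed a trivial one — of) a sub-quadrant in the ambient sense, or simply enlarge the bookkeeping to allow the $a_i$ to sit on these hyperplanes. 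Apart from this routine dimension induction, the proof is purely a compactness packaging of Proposition \ref{prop: param basic sets}, so I expect no real difficulty.
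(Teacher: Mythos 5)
Your argument is correct and is essentially the paper's proof spelled out in detail: the paper simply notes that ``by a compactness argument'' $A$ is a finite union of translates of reflections of $\mathcal{A}_{m,n}$-basic sets and then applies Proposition \ref{prop: param basic sets} to each, composing with the corresponding $h_{a,\sigma}$. The one thing worth flagging is that your worry about the coordinate hyperplanes $\{z_j=a_j\}$, $j\le m$, rests on a misreading of $\hat{I}_{m,n,r_a}$: this is $[0,s_1)\times\cdots\times[0,s_m)\times(-t_1,t_1)\times\cdots\times(-t_n,t_n)$, closed at $0$ in each $x$-coordinate, not the open box $I_{m,n,r_a}$. Hence $h_{a,\sigma}(\hat{I}_{m,n,r_a})$ already contains each hyperplane $\{z_j=a_j\}$, and $\bigcup_{\sigma\in\{-1,1\}^m}h_{a,\sigma}(\hat{I}_{m,n,r_a})$ is a full open box around $a$. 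The dimension-induction ``fix'' you propose is therefore unnecessary (though harmless); the compactness packaging you describe already gives the statement.
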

\begin{rem}
Since $h_{a_{i},\sigma_{i}}\circ\rho_{i}\restriction Q_{i}$ is a
diffeomorphism onto its image, it follows that $A$ has dimension
(in the sense of \cite[8.2]{vdd:speiss:gen}). Notice also that the
components of $h_{a_{i},\sigma_{i}}\circ\rho_{i}$ belong to $\mathcal{A}_{m_{i}',n_{i}',r_{i}'}$.\end{rem}
\begin{proof}
The set $A$ is a finite union of translates of reflections of $\mathcal{A}_{m,n}$-basic
sets, by a compactness argument. Hence the parametrisation required
is obtained from the parametrisation in Proposition \ref{prop: param basic sets}
composed with suitable translations and reflections.
\end{proof}
The next few definitions and statements are inspired by the approach
to model-completeness developed in \cite{bm_semi_subanalytic}, \cite{vdd:speiss:gen}
and \cite{rsw}. The proofs follow the usual pattern, with some minor
differences dictated by the generality of our setting. It is however
worth noticing that in order to prove the Fibre Cutting Lemma \ref{empty: Fibre Cutting},
we need to use both quasianalyticity (see Definition \ref{def: quasi-analyticity})
and $\mathcal{A}$-analyticity (see Definition \ref{def: A-analytic}).
\begin{defn}
\label{def: trivial manifolds}Let $F=\left(F_{1},\ldots,F_{m+n}\right):\hat{I}_{m,n,r}\to I_{0,m+n,r'}$
be a map such that $F_{i}\in\mathcal{A}_{m,n,r}$, and let $Q\subseteq\overline{Q}\subseteq\hat{I}_{m,n,r}$
be a sub-quadrant of dimension $q\leq m+n$. The set 
\[
\Gamma\left(F\restriction Q\right)=\left\{ \left(u,w\right):\ u\in Q,\ w=F\left(u\right)\right\} \subseteq I_{m,m+2n,\left(r,r'\right)}
\]
 is called a \emph{trivial manifold. }

A trivial manifold $M$ is clearly a $\mathcal{C}^{1}$ manifold of
dimension $q$ and an $\mathcal{A}$-basic set. The frontier $\text{fr}\left(M\right)=\overline{M}\setminus M$
is the set $\left\{ \left(u,w\right):\ u\in\text{fr}\left(Q\right),\ w=F\left(u\right)\right\} $,
which, by $\mathcal{A}$-analyticity of the components of $F$ (see
Definition \ref{def: A-analytic}), is an $\mathcal{A}$-semianalytic
set. Moreover, $\text{dim}\left(\text{fr}\left(M\right)\right)<\text{dim}\left(M\right)$. \end{defn}
\begin{notation}
Let $k,l\in\mathbb{N}$ with $k\leq l$. Denote by $\Pi_{k}^{l}:\mathbb{R}^{l}\to\mathbb{R}^{k}$
the projection onto the \textbf{last} $k$ coordinates, i.e. $\Pi_{k}^{l}\left(z_{1},\ldots,z_{l}\right)=\left(z_{l-k+1},\ldots,z_{l}\right)$. \end{notation}
\begin{cor}
\label{cor: decomp of s.a. into trivial manifolds}Let $A\subseteq\mathbb{R}^{m+n}$
be a bounded $\mathcal{A}_{m,n}$-semianalytic set. Then there exist
finitely many trivial manifolds $M_{1},\ldots,M_{N}\subseteq\mathbb{R}^{2\left(m+n\right)}$
such that $A={\displaystyle \bigcup_{i=1}^{N}\Pi_{m+n}^{2\left(m+n\right)}\left(M_{i}\right)}$
and $\Pi_{m+n}^{2\left(m+n\right)}\restriction M_{i}$ is an immersion.\end{cor}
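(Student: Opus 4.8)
The plan is to feed the parametrisation of Corollary~\ref{cor: param semi-anal} directly into Definition~\ref{def: trivial manifolds}, using the standard graph trick to turn each parametrising map into an immersion of its graph. First I would apply Corollary~\ref{cor: param semi-anal} to write $A=\bigcup_{i=1}^{N}h_{a_i,\sigma_i}\circ\rho_i(Q_i)$ and put $\phi_i:=h_{a_i,\sigma_i}\circ\rho_i\colon\hat I_{m_i',n_i',r_i'}\to\mathbb{R}^{m+n}$. By the remark following that corollary the components of $\phi_i$ lie in $\mathcal{A}_{m_i',n_i',r_i'}$, and $\phi_i\restriction Q_i$ is a diffeomorphism onto $\phi_i(Q_i)$; and, inspecting the proof (which reduces to Proposition~\ref{prop: param basic sets}), we may further assume $\overline{Q_i}\subseteq\hat I_{m_i',n_i',r_i'}$, so that $Q_i$ is a sub-quadrant of exactly the kind allowed in Definition~\ref{def: trivial manifolds}.

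Secondly I would put $\phi_i$ into the codomain shape demanded by that definition. Each component of $\phi_i$ is bounded on $\hat I_{m_i',n_i',r_i'}$ (either by the extension property in~\ref{vuoto:conditions on functions}, which exhibits it as the restriction of a function continuous on a box whose closure is compact, or simply by restricting to a sub-box with compact closure still containing $\overline{Q_i}$); hence there is a polyradius $r_i''\in(0,\infty)^{m+n}$ with $\phi_i(\hat I_{m_i',n_i',r_i'})\subseteq I_{0,m+n,r_i''}$. Now set $M_i:=\Gamma(\phi_i\restriction Q_i)=\{(u,w)\ :\ u\in Q_i,\ w=\phi_i(u)\}$. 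By Definition~\ref{def: trivial manifolds} this is a trivial manifold, contained in $I_{m_i',\,m_i'+2n_i',\,(r_i',r_i'')}\subseteq\mathbb{R}^{2m_i'+2n_i'}=\mathbb{R}^{2(m+n)}$ (using $m_i'+n_i'=m+n$). Since $\Pi_{m+n}^{2(m+n)}$ is the projection onto the last $m+n$ coordinates, $\Pi_{m+n}^{2(m+n)}(M_i)=\phi_i(Q_i)$, so $\bigcup_{i=1}^{N}\Pi_{m+n}^{2(m+n)}(M_i)=\bigcup_{i=1}^{N}\phi_i(Q_i)=A$.

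Finally I would check the immersion claim. The graph map $\gamma_i\colon Q_i\to M_i$, $u\mapsto(u,\phi_i(u))$, is a diffeomorphism (its inverse being the restriction to $M_i$ of the projection onto the first $m+n$ coordinates, and $M_i$ being a $\mathcal{C}^1$ manifold by Definition~\ref{def: trivial manifolds}), and one sees at once that $\Pi_{m+n}^{2(m+n)}\restriction M_i=(\phi_i\restriction Q_i)\circ\gamma_i^{-1}$. This is a composite of two diffeomorphisms onto their images (the factor $\phi_i\restriction Q_i$ being a diffeomorphism by Corollary~\ref{cor: param semi-anal}), hence $\Pi_{m+n}^{2(m+n)}\restriction M_i$ is a diffeomorphism onto $\phi_i(Q_i)$, in particular an immersion, which completes the argument.

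I do not expect a genuine obstacle here, since the substantive work is already done in Corollary~\ref{cor: param semi-anal} and the monomialisation results behind it; the only points requiring any care are the boundedness step that lets one fit $\phi_i$ inside a codomain box $I_{0,m+n,r_i''}$ (so that $M_i$ matches Definition~\ref{def: trivial manifolds} verbatim) and the bookkeeping confirming that the various ambient boxes $I_{m_i',\,m_i'+2n_i',\,\cdot}$ all embed into the single space $\mathbb{R}^{2(m+n)}$.
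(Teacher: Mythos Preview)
Your proposal is correct and follows essentially the same approach as the paper: apply Corollary~\ref{cor: param semi-anal} and set $M_i=\Gamma(h_{a_i,\sigma_i}\circ\rho_i\restriction Q_i)$. The paper's proof is a one-liner that leaves implicit the boundedness and immersion checks you have carefully spelled out.
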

\begin{proof}
Apply Corollary \ref{cor: param semi-anal} to $A$ and let $M_{i}=\Gamma\left(h_{a_{i},\sigma_{i}}\circ\rho_{i}\restriction Q_{i}\right)$.
Notice that $\text{dim}\left(M_{i}\right)=\text{dim}\left(Q_{i}\right)\leq\text{dim}\left(A\right)$.\end{proof}
\begin{defn}
\label{def: A-manifold}An $\mathcal{A}_{m,n}$-basic set $M\subseteq I_{m,n,r}$
is called an \emph{$\mathcal{A}_{m,n}$-manifold }if $M$ is a $\mathcal{C}^{1}$-manifold
of dimension $d\leq m+n$ and there are $h_{1},\ldots,h_{m+n-d}\in\mathcal{A}_{m,n}$
such that $h_{i}\restriction M\equiv0$ and for all $z\in M$ the
vectors $\nabla h_{1}\left(z\right),\ldots,\nabla h_{m+n-d}\left(z\right)$
are linearly independent (see \cite[Def. 8.3]{vdd:speiss:gen}). Notice
that a trivial manifold is an $\mathcal{A}_{m,n}$-manifold.

Let $\iota:\left\{ 1,\ldots,d\right\} \to\left\{ 1,\ldots,m+n\right\} $
be a strictly increasing sequence and let $\Pi_{\iota}\left(z_{1},\ldots,z_{m+n}\right)=\left(z_{\iota\left(1\right)},\ldots,z_{\iota\left(d\right)}\right)$.
Let $M_{\iota}=\left\{ z\in M:\ \Pi_{\iota}\restriction M\ \text{has\ rank\ }d\ \text{at\ }z\right\} $.
Then $M_{\iota}=\left\{ z\in M:\ f_{\iota}\left(z\right)\not=0\right\} $
for some $f_{\iota}\in\mathcal{A}_{m,n,r}$, which is a polynomial
in the partial derivatives and modified derivatives $\partial_{i}$
of $h_{1},\ldots,h_{m+n-d}$. In particular, $M_{\iota}$ is an $\mathcal{A}_{m,n}$-manifold
of dimension $d$ and $M$ is the union of all the $M_{\iota}$.

Let $k\leq m+n$ and for every increasing sequence $\iota$, define
$m_{\iota}\left(k\right)$ as the dimension of the vector space $\Pi_{\iota}\left(\mathbb{R}^{m+n}\right)\cap\Pi_{k}^{m+n}\left(\mathbb{R}^{m+n}\right)$.
Notice that $m_{\iota}\left(k\right)\in\left\{ 0,\ldots,d\right\} $.
Suppose that $\Pi_{k}^{m+n}\restriction M_{\iota}$ has constant rank
$m_{\iota}\left(k\right)$. Then, arguing as in \cite[8.11]{vdd:speiss:gen},
it is easy to see that for every $a\in\mathbb{R}^{k}$, the fibre
$M_{a}:=\left(\Pi_{k}^{m+n}\right)^{-1}\left(a\right)\cap M_{\iota}$
is either empty or an $\mathcal{A}_{m,n}$-manifold of dimension $d-m_{\iota}\left(k\right)$
and every connected component of the fibre has non-empty frontier. \end{defn}
\begin{void}
\label{empty: Fibre Cutting}\textbf{Fibre Cutting Lemma. }\emph{Let
$M_{\iota}\subseteq I_{m,n,r}$ be as in the above definition and
let $k\leq m+n$. Suppose that $\Pi_{k}^{m+n}\restriction M_{\iota}$
has constant rank $m_{\iota}\left(k\right)$ and that $m_{\iota}\left(k\right)<d=\text{dim}\left(M_{\iota}\right)$.
Then there is an $\mathcal{A}_{m,n,r}$-basic set $A\subseteq M_{\iota}$
such that $\text{dim}\left(A\right)<d$ and $\Pi_{k}^{m+n}\left(A\right)=\Pi_{k}^{m+n}\left(M_{\iota}\right)$.}\end{void}
\begin{proof}
Let $f,g_{1},\ldots,g_{q}\in\mathcal{A}_{m,n,r}$ be such that

\[
M_{\iota}=\left\{ \left(x,y\right)\in I_{m,n,r}:\ f\left(x,y\right)=0,\ g_{1}\left(x,y\right)=0,\ldots,g_{q}\left(x,y\right)=0\right\} .
\]
Recall that for every $a\in\Pi_{k}^{m+n}\left(M_{\iota}\right)$,
the fibre $M_{a}:=\left(\Pi_{k}^{m+n}\right)^{-1}\left(a\right)\cap M_{\iota}$
is an $\mathcal{A}_{m,n}$-manifold of dimension $d-m_{\iota}\left(k\right)>0$
and every connected component of the fibre has non-empty frontier.
Let $r=\left(s,t\right)$. The function $g\left(x,y\right)=\prod_{i=1}^{m}x_{i}\left(s_{i}-x_{i}\right)\cdot\prod_{i=1}^{n}\left(t_{i}^{2}-y_{i}^{2}\right)\cdot\prod_{i=1}^{q}g_{i}\left(x,y\right)\in\mathcal{A}_{m,n,r}$
is positive on $M_{a}$ and vanishes on $\text{fr}\left(M_{a}\right)$,
hence $g\restriction M_{a}$ has a critical point on every connected
component of $M_{a}$. The set

\[
A=\left\{ \left(x,y\right)\in M_{\iota}:\ g\restriction M_{a}\text{\ is\ critical\ at\ \ensuremath{\left(x,y\right)},\ where }a:=\Pi_{k}^{m+n}\left(x,y\right)\right\} 
\]
is clearly $\mathcal{A}_{m,n,r}$-basic and $\Pi_{k}^{m+n}\left(A\right)=\Pi_{k}^{m+n}\left(M_{\iota}\right)$.
We claim that $A$ has empty interior in $M_{\iota}$, and hence $\text{dim}\left(A\right)<d$.

Let $a\in\Pi_{k}^{m+n}\left(M_{\iota}\right)$ and let $h_{1},\ldots,h_{m+n-d}\in\mathcal{A}_{m,n}$
such that $h_{i}\restriction M_{a}\equiv0$ and for all $z=\left(x,y\right)\in M_{a}$
the vectors $\nabla h_{1}\left(z\right),\ldots,\nabla h_{m+n-d}\left(z\right)$
are linearly independent. If $C$ is a connected component of $M_{a}$,
let 

\[
S=\left\{ z\in C:\ g\restriction M_{a}\text{\ is\ critical\ at }z\right\} =\left\{ z\in C:\ \text{d}g\left(z\right)\land\text{d}h_{1}\left(z\right)\land\ldots\land\text{d}h_{m+n-d}\left(z\right)=0\right\} .
\]
It suffices to prove that $S$ has empty interior on $C$. Clearly,
$S$ is closed in $C$. We prove that, if $S$ has non-empty interior
in $C$, then $S$ is also open in $C$, which contradicts the fact
that $g$ is not constant on the fibres. Let $z_{0}\in S\setminus\overset{\circ}{S}$
and define $\overline{g}\left(z\right):=g\left(z+z_{0}\right),\ \overline{h}_{i}\left(z\right):=h_{i}\left(z+z_{0}\right)$
for $i=1,\ldots,m+n-d$. By $\mathcal{A}$-analyticity (see Definition
\ref{def: A-analytic}), the germs at zero of these functions belong
to $\mathcal{A}_{0,m+n}$. By the Implicit Function Theorem and Axiom
6 in \ref{emp:properties of the morph}, there are a polyradius $r'\in\mathbb{R}^{d}$
and a map $\varphi=\left(\varphi_{1},\ldots,\varphi_{m+n}\right)\in\left(\mathcal{A}_{0,d,r'}\right)$
such that $\varphi\left(0\right)=z_{0}$ and $\varphi$ is a local
parametrisation of $C$ around $z_{0}$. Let $F\in\mathcal{A}_{0,d,r'}$
be such that 

\begin{align*}
\varphi^{-1}\left(S\right) & =\left\{ w\in I_{0,d,r'}:\ \text{d}\left(\overline{g}\circ\varphi\right)\left(w\right)\land\text{d}\left(\overline{h}_{1}\circ\varphi\right)\left(w\right)\land\ldots\land\text{d}\left(\overline{h}_{m+n-d}\circ\varphi\right)\left(w\right)=0\right\} \\
 & =\left\{ w\in I_{0,d,r'}:\ F\left(w\right)=0\right\} .
\end{align*}
Since $\varphi$ is a homeomorphism, $\varphi^{-1}\left(\overset{\circ}{S}\right)\subseteq I_{0,d,r'}$
is an open set on which $F$ vanishes, and $0\in\text{cl}\left(\varphi^{-1}\left(\overset{\circ}{S}\right)\right)$
. By quasianalyticity, there exists $r''\leq r'$ such that $F$ vanishes
on $I_{0,d,r''}$, which contradicts the fact that $z_{0}\notin\overset{\circ}{S}$. \end{proof}
\begin{prop}
\label{prop: trivial manifolds for subanal sets}Let $A\subseteq\mathbb{R}^{m+n}$
be a bounded $\mathcal{A}_{m,n}$-semianalytic set and $k\leq m+n$.
Then there exist finitely many trivial manifolds $M_{1},\ldots,M_{N}\subseteq\mathbb{R}^{2\left(m+n\right)}$,
with $\text{dim}\left(M_{i}\right)\leq k$, such that $\Pi_{k}^{m+n}\left(A\right)={\displaystyle \bigcup_{i=1}^{N}\Pi_{k}^{2\left(m+n\right)}\left(M_{i}\right)}$
and for every $i=1,\ldots,N$ there is a strictly increasing sequence
$\iota:\left\{ 1,\ldots,\text{dim}\left(M_{i}\right)\right\} \to\left\{ m+n-k+1,\ldots,m+n\right\} $
such that $\Pi_{\iota}\restriction M_{i}$ is an immersion.\end{prop}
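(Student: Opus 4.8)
The plan is to reduce the statement to the Fibre Cutting Lemma \ref{empty: Fibre Cutting} by an induction on dimension, in the spirit of the usual Gabrielov-style argument. First I would apply Corollary \ref{cor: decomp of s.a. into trivial manifolds} to write $A = \bigcup_{i} \Pi_{m+n}^{2(m+n)}(M_i)$ for finitely many trivial manifolds $M_i \subseteq \mathbb{R}^{2(m+n)}$. It therefore suffices to treat a single trivial manifold $M$ and show that $\Pi_k^{m+n}(\Pi_{m+n}^{2(m+n)}(M))$ — which equals $\Pi_k^{2(m+n)}(M)$ — is a finite union of images of trivial manifolds of dimension $\leq k$, each projecting via $\Pi_k$ by an immersion after a suitable choice of coordinate sequence. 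Writing $\ell = m+n$ (so the ambient space is $\mathbb{R}^{2\ell}$, and $M$ sits inside $I_{\ell,\ell,(r,r')}$ up to relabelling as an $\mathcal{A}$-manifold), I would set $p = 2\ell - k$ throughout, so that $\Pi_k^{2\ell}$ forgets the first $p$ coordinates.

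The induction is on $d = \dim(M)$. If $d \leq k$, I would decompose $M$ into the pieces $M_\iota = \{z \in M : \Pi_\iota \restriction M \text{ has rank } d \text{ at } z\}$ as in Definition \ref{def: A-manifold}, ranging over strictly increasing sequences $\iota : \{1,\dots,d\} \to \{2\ell - k + 1, \dots, 2\ell\}$ landing in the last $k$ coordinates; since $M$ is the union of these (over all $\iota$, but those not landing in the last $k$ coordinates can be ignored because we only care about the projection), and each $M_\iota$ is itself a (sub-quadrant image of a) trivial manifold on which $\Pi_\iota$ is an immersion, we are done in this case — modulo rewriting each $M_\iota$ as a genuine trivial manifold, which is routine using Corollary \ref{cor: param semi-anal}/\ref{cor: decomp of s.a. into trivial manifolds} applied to the $\mathcal{A}$-semianalytic set $M_\iota$ and keeping track that the immersion hypothesis survives. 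Now suppose $d > k$. I would stratify $M$ into finitely many pieces $M_\iota$ on each of which $\Pi_k^{2\ell} \restriction M_\iota$ has constant rank $m_\iota(k)$; this is possible because the rank is controlled by non-vanishing of finitely many $\mathcal{A}$-functions (minors of the relevant Jacobian, expressed via the $\partial_i$ and partial derivatives of the defining functions $h_j$, all of which lie in $\mathcal{A}$ by Remarks \ref{rems: properties of the algebras}), so the loci of each constant rank are $\mathcal{A}$-semianalytic and can be re-decomposed into trivial manifolds by Corollary \ref{cor: decomp of s.a. into trivial manifolds}, on which we reapply the rank decomposition; this process terminates since the dimension drops or the rank becomes locally constant. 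On a piece where $m_\iota(k) = d$, the map $\Pi_k^{2\ell}$ is already an immersion on $M_\iota$, but $\dim M_\iota = d > k$ is impossible since the image sits in $\mathbb{R}^k$; so actually $m_\iota(k) < d$ always when $d > k$, and we are exactly in the hypothesis of the Fibre Cutting Lemma. Applying Lemma \ref{empty: Fibre Cutting} gives an $\mathcal{A}$-basic set $A_\iota \subseteq M_\iota$ with $\dim(A_\iota) < d$ and $\Pi_k^{2\ell}(A_\iota) = \Pi_k^{2\ell}(M_\iota)$. Since $A_\iota$ is bounded and $\mathcal{A}$-semianalytic, the inductive hypothesis applies to it, yielding the desired trivial manifolds of dimension $\leq k$ with the immersion property; taking the union over the finitely many $\iota$ and over the original $M_i$ finishes the proof.

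The main technical obstacle I anticipate is the bookkeeping needed to make the constant-rank stratification actually terminate and to keep everything inside the class of (images of sub-quadrants under admissible transformations of) trivial manifolds rather than drifting into the larger class of $\mathcal{A}$-manifolds or general $\mathcal{A}$-semianalytic sets. The resolution is to observe that each time we pass to an $\mathcal{A}$-semianalytic set of the same dimension we can re-trivialise by Corollary \ref{cor: decomp of s.a. into trivial manifolds} without increasing dimension, and that the decomposition of a fixed manifold by rank and by the $M_\iota$ is finite, so the only genuine recursion is the one on $d$, which is well-founded. A second minor point is ensuring that, when we record the increasing sequence $\iota$ witnessing that $\Pi_k \restriction M_i$ is an immersion, this $\iota$ maps into $\{m+n-k+1,\dots,m+n\}$ as required by the statement — this is automatic from the definition of $M_\iota$ once we set up $\Pi_k^{2\ell}$ as the projection onto the last $k$ coordinates and choose $\iota$ among sequences landing there, using that on $M_\iota$ the last $k$ coordinates restricted to $M_\iota$ already have rank $d$ in those $\iota$-positions. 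Everything else is a direct transcription of the argument in \cite[\S 8]{vdd:speiss:gen} or \cite{rsw} into the present setting, using that the closure, differentiation, and blow-up properties in \ref{emp:properties of the morph} make all the auxiliary functions stay in $\mathcal{A}$.
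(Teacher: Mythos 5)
Your overall strategy---reduce to $\mathcal{A}$-manifolds via Corollary~\ref{cor: decomp of s.a. into trivial manifolds}, induct on dimension, apply the Fibre Cutting Lemma~\ref{empty: Fibre Cutting}---is the paper's. But there is a genuine gap in your case $d\leq k$. You decompose $M$ into the pieces $M_\iota$ of Definition~\ref{def: A-manifold} and assert that those with $\iota$ not landing in $\{m+n-k+1,\ldots,m+n\}$ ``can be ignored because we only care about the projection.'' That is false: the union of the $M_\iota$ with $\iota$ into the last $k$ coordinates is exactly the set of $z\in M$ at which $\Pi_k^{m+n}\restriction M$ has full rank $d$, so a point where $\Pi_k$ drops rank below $d$ lies only in other $M_\iota$'s, and its image under $\Pi_k$ is still part of $\Pi_k^{m+n}(M)$ and must be covered. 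The hypothesis $d\leq k$ does not make $\Pi_k\restriction M$ an immersion, and this dropped-rank subcase is one where the Fibre Cutting Lemma is indispensable---but you only invoke it when $d>k$.

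The paper avoids this by organising the dichotomy around the rank $r$ of $\Pi_k^{m+n}\restriction M$, not around $d$ versus $k$: it runs a nested induction (outer on $d$, inner on the maximal rank $r$), passes to the open locus $M_1$ where the rank equals $r$ (the lower-rank locus $M_2$ being dispatched by the inner induction), and then splits on $r=d$ versus $r<d$. The case $r=d$ forces $d\leq k$ and guarantees that the $M_\iota$ with $\iota$ into the last $k$ coordinates cover $M$; the case $r<d$ is where the Fibre Cutting Lemma applies to each $M_\iota$ with $m_\iota(k)=r$, with a final check that the leftover set $M_3:=M\setminus\bigcup\{M_\iota:\ m_\iota(k)=r\}$ is empty on the constant-rank locus. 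Relatedly, your remark that the constant-rank stratification ``terminates since the dimension drops or the rank becomes locally constant'' is not yet an argument: when you re-decompose the lower-rank locus via Corollary~\ref{cor: decomp of s.a. into trivial manifolds} the dimension need not drop, and the well-founded quantity that makes the recursion terminate is precisely the maximal rank $r$, which you would need to exhibit as an explicit inner induction.
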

\begin{proof}
The proof is by induction on $d=\text{dim}\left(A\right)$, for all
$m,n,k\in\mathbb{N}$. If $d=0$, then by Corollary \ref{cor: param semi-anal}
$A$ is a finite set and there is nothing to prove. Let $d>0$. By
Corollary \ref{cor: decomp of s.a. into trivial manifolds} and the
inductive hypothesis, it is enough to prove the proposition for an
$\mathcal{A}_{m,n}$-manifold $M$ of dimension $d$, instead of $A$.
We argue by induction on $r=\text{max}\left\{ \mbox{\ensuremath{\text{rk}_{z}\Pi_{k}^{m+n}\restriction}M}:\ z\in M\right\} $.
If $r=0$, then $\Pi_{k}^{m+n}\left(M\right)$ is a finite set and
there is nothing to prove, so let us assume that $r>0$. Let $M_{1}=\left\{ z\in M:\ \text{rk}_{z}\Pi_{k}^{m+n}\restriction M=r\right\} $
and $M_{2}=\left\{ z\in M:\ \text{rk}_{z}\Pi_{k}^{m+n}\restriction M<r\right\} $.
Then $M_{1}$ and $M_{2}$ are $\mathcal{A}_{m,n}$-semianalytic and
$M_{1}$ is open in $M$, and hence is an $\mathcal{A}_{m,n}$-manifold.
By Corollary \ref{cor: decomp of s.a. into trivial manifolds} and
the inductive hypothesis on the rank, we obtain the statement of the
proposition for $M_{2}$. Hence we may assume that $\Pi_{k}^{m+n}\restriction M$
has constant rank $r$. 

Suppose first that $r=d$. Notice that in this case $d\leq k$ and
$M$ is the union of all $\mathcal{A}_{m,n}$-manifolds $M_{\iota}$
(as in Definition \ref{def: A-manifold}) such that $\iota:\left\{ 1,\ldots,d\right\} \to\left\{ m+n-k+1,\ldots,m+n\right\} $
is a strictly increasing sequence. By Corollary \ref{cor: decomp of s.a. into trivial manifolds}
and the inductive hypothesis on the dimension, the proposition holds
then trivially for $M_{\iota}$. 

Hence we may assume that $r<d$. For $\iota:\left\{ 1,\ldots,d\right\} \to\left\{ 1,\ldots,m+n\right\} $
an increasing sequence, consider the $\mathcal{A}_{m,n}$-manifold
$M_{\iota}$. If $r=m_{\iota}\left(k\right)$, then by the Fibre Cutting
Lemma \ref{empty: Fibre Cutting}, Corollary \ref{cor: decomp of s.a. into trivial manifolds}
and the inductive hypothesis on the dimension, the proposition holds
for $M_{\iota}$. Now, one can check that for every $z\in M_{3}:=M\setminus\bigcup\left\{ M_{\iota}:\ m_{\iota}\left(k\right)=r\right\} $
we have $\text{rk}_{z}\Pi_{k}^{m+n}\restriction M<r$, hence $M_{3}=\emptyset$
and we are done.
\end{proof}
We are now ready to prove the parametrisation result which was announced
at the beginning of this section. In Subsection \ref{subsec:Proof-of-o-minimality}
we will show that every bounded $\mathbb{R}_{\mathcal{A}}$-definable
set is a projection of an $\mathcal{A}$-semianalytic set, hence the
result below provides a parametrisation theorem for all bounded $\mathbb{R}_{\mathcal{A}}$-definable
sets.
\begin{thm}
\label{thm: param subanal}Let $A\subseteq\mathbb{R}^{m+n}$ be a
bounded $\mathcal{A}_{m,n}$-semianalytic set and let $k\leq m+n$.
Then there exists $N\in\mathbb{N}$ and for all $i=1,\ldots,N$, there
exist $m_{i}',n_{i}'\in\mathbb{N}$, with $m_{i}'+n_{i}'=m+n$, a
polyradius $r_{i}$, a sub-quadrant $Q_{i}\subseteq\hat{I}_{m_{i}',n_{i}',r_{i}}$
and a map $H_{i}:\hat{I}_{m_{i}',n_{i}',r_{i}}\to\mathbb{R}^{k}$,
whose components are in $\mathcal{A}_{m_{i}',n_{i}',r_{i}}$, such
that $H_{i}\restriction Q_{i}:Q_{i}\to H_{i}\left(Q_{i}\right)$ is
a diffeomorphism and 
\[
\Pi_{k}^{m+n}\left(A\right)=\bigcup_{i=1}^{N}H_{i}\left(Q_{i}\right).
\]
\end{thm}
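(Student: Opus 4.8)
The plan is to read the theorem off Proposition~\ref{prop: trivial manifolds for subanal sets} by unwinding the trivial manifolds it produces, the only genuine work being to upgrade its ``immersion'' conclusion to the ``diffeomorphism onto the image'' demanded here.

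First I would apply Proposition~\ref{prop: trivial manifolds for subanal sets} to $A$ and $k$, obtaining finitely many trivial manifolds $M_{1},\dots,M_{N}\subseteq\mathbb{R}^{2(m+n)}$ with $\dim M_{i}\le k$, with $\Pi_{k}^{m+n}(A)=\bigcup_{i}\Pi_{k}^{2(m+n)}(M_{i})$, and, for each $i$, a strictly increasing $\iota_{i}$ ranging over $q_{i}:=\dim M_{i}$ of the coordinates onto which $\Pi_{k}^{2(m+n)}$ projects, such that $\Pi_{\iota_{i}}\restriction M_{i}$ is an immersion. By Definition~\ref{def: trivial manifolds}, $M_{i}=\Gamma(F^{(i)}\restriction Q_{i})$ for a map $F^{(i)}$ with components in $\mathcal{A}_{m_{i}',n_{i}',r_{i}}$ ($m_{i}'+n_{i}'=m+n$) and a sub-quadrant $Q_{i}\subseteq\overline{Q_{i}}\subseteq\hat{I}_{m_{i}',n_{i}',r_{i}}$ with $\dim Q_{i}=q_{i}$. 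Put $H_{i}:=\Pi_{k}^{m+n}\circ F^{(i)}$, i.e. the last $k$ components of $F^{(i)}$; these lie in $\mathcal{A}_{m_{i}',n_{i}',r_{i}}$. Since the graph map $u\mapsto(u,F^{(i)}(u))$ identifies $Q_{i}$ with $M_{i}$ and satisfies $\Pi_{k}^{2(m+n)}\circ(\text{graph map})=H_{i}$, we get $H_{i}(Q_{i})=\Pi_{k}^{2(m+n)}(M_{i})$ and hence $\Pi_{k}^{m+n}(A)=\bigcup_{i}H_{i}(Q_{i})$. The immersion hypothesis on $\Pi_{\iota_{i}}\restriction M_{i}$ then says exactly that the $q_{i}$ components of $H_{i}$ selected by $\iota_{i}$ restrict on $Q_{i}$ to a local diffeomorphism onto an open subset of $\mathbb{R}^{q_{i}}$; in particular $H_{i}\restriction Q_{i}$ is a $\mathcal{C}^{1}$ immersion of a $q_{i}$-dimensional sub-quadrant.

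It remains to split each $(H_{i},Q_{i})$ into finitely many pairs of the same shape on which $H_{i}$ is moreover injective — then $H_{i}\restriction Q_{i}$ is an injective immersion, i.e. a diffeomorphism onto its image, and relabelling the resulting finite family yields the statement. Fix $i$ and let $\psi$ be the $q_{i}$-tuple of components of $H_{i}$ selected by $\iota_{i}$, a local diffeomorphism on $Q_{i}$. For $a\in\overline{Q_{i}}$: if $a\in Q_{i}$, then $\psi$ is a $\mathcal{C}^{1}$ local diffeomorphism near $a$, hence injective on a small box around $a$, which (after recentring, legitimate by $\mathcal{A}$-analyticity, Definition~\ref{def: A-analytic}) is a finite union of sub-quadrants; if $a\in\partial Q_{i}$, recentre at $a$ via $\mathcal{A}$-analyticity and apply the monomialisation theorem~\ref{thm: geom monomial} to $\psi$, so that on each of the finitely many resulting admissible charts, and on each sub-quadrant therein, $\psi$ becomes, up to fixed signs, $x\mapsto(x^{\gamma_{1}}w_{1}(x),\dots,x^{\gamma_{q_{i}}}w_{q_{i}}(x))$ with $w_{j}(0)\neq0$; on a small positive box the non-vanishing of the Jacobian of $\psi$ (which holds since $\psi$ is a local diffeomorphism) combined with this monomial-times-unit form forces $\psi$ to be injective, the mechanism being that in logarithmic coordinates the map is affine (with the exponent matrix) up to a contraction contributed by the units, and one solves for the active variables successively when that matrix degenerates. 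Covering the compact set $\partial Q_{i}$ by finitely many neighbourhoods of the second type and the remaining compact part of $Q_{i}$ by finitely many of the first type, and using Remark~\ref{rem: elemntary transf send quadrants to sectors} and the remark following Theorem~\ref{thm: geom monomial} to see that each such neighbourhood meets $Q_{i}$ in a finite union of images of sub-quadrants under admissible transformations (recentrings included), I obtain the required finite refinement; on each piece $H_{i}\circ\rho$ still has components in $\mathcal{A}$ (Remark~\ref{rem: elemntary transf send quadrants to sectors}) and its distinguished $q_{i}$-subtuple is injective there, hence so is $H_{i}\circ\rho$.

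The main obstacle is this last injectivity step. Since o-minimality (Theorem~A) is established only afterwards, one cannot cut the domain into finitely many pieces on which the immersion of Proposition~\ref{prop: trivial manifolds for subanal sets} becomes one-to-one by a cell-decomposition argument; the fibres of $\psi$ must instead be controlled by hand, and monomialisation is the tool that does this. The delicate sub-case is when the exponent matrix of the monomialised $\psi$ is singular — which can only happen when $Q_{i}$ does not reach the corner in some direction — where one has to exploit the non-degeneracy of the full Jacobian of $\psi$, solving for the variables one at a time, to still extract local injectivity on a small sub-quadrant.
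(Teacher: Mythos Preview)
Your first two paragraphs reproduce the paper's proof exactly: invoke Proposition~\ref{prop: trivial manifolds for subanal sets}, write each trivial manifold as $M_i=\Gamma(F^{(i)}\restriction Q_i)$, and take $H_i$ to be the last $k$ components of the graph map $z\mapsto(z,F^{(i)}(z))$. The paper stops right there --- it simply asserts that ``the maps $H_i:=\Pi_k^{2(m+n)}\circ G_i$ and the quadrants $Q_i$ satisfy the required properties,'' with no separate discussion of injectivity. So the paper does not carry out the extra step you propose; whether that is a genuine lacuna in the paper or an imprecision in the statement of Theorem~\ref{thm: param subanal} is a separate matter.

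Your attempt to supply that injectivity argument, however, has a real gap at the boundary. After recentring at $a\in\partial Q_i$ and applying Theorem~\ref{thm: geom monomial}, you obtain admissible transformations $\rho$ such that each component of $\psi\circ\rho$ is normal, $x\mapsto x^{\gamma_j}w_j(x)$ with $w_j(0)\neq 0$. But this does \emph{not} make $\psi\circ\rho$ injective on a small positive box: the exponent vectors $\gamma_j$ may be linearly dependent and the units can interact. Your ``logarithmic coordinates'' heuristic needs the exponent matrix $(\gamma_{j,i})$ to be invertible, which you concede can fail; the fallback of ``solving for the active variables successively'' is not an argument, since there is no triangular structure to exploit in general. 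Worse, the non-degenerate Jacobian you invoke is that of $\psi$, not of $\psi\circ\rho$: admissible transformations include blow-up charts whose own Jacobian vanishes along the exceptional locus, so $\psi\circ\rho$ need not even be a local diffeomorphism near the origin of the new chart. Thus the step you yourself flag as ``the main obstacle'' is not resolved.
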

\begin{proof}
By Proposition \ref{prop: trivial manifolds for subanal sets}, there
are trivial manifolds $M_{i}\subseteq\mathbb{R}^{2\left(m+n\right)}$,
with $\text{dim}\left(M_{i}\right)\leq k$, such that $\Pi_{k}^{m+n}\left(A\right)={\displaystyle \bigcup_{i=1}^{N}\Pi_{k}^{2\left(m+n\right)}\left(M_{i}\right)}$
and $\Pi_{k}^{2\left(m+n\right)}\restriction M_{i}$ is an immersion.
By definition of trivial manifold, there exist maps $F_{i}:\hat{I}_{m_{i}',n_{i}',r_{i}}\to I_{0,m+n,r_{i}'}$
with components in $\mathcal{A}_{m_{i}',n_{i}',r_{i}}$, and sub-quadrants
$Q_{i}\subseteq\overline{Q}_{i}\subseteq\hat{I}_{m_{i}',n_{i}',r_{i}}$
of dimension $q_{i}\leq k$ such that $M_{i}=\Gamma\left(F_{i}\restriction Q_{i}\right)$.
Let $G_{i}:\hat{I}_{m_{i}',n_{i}',r_{i}}\to I_{0,2\left(m+n\right),\left(r_{i},r_{i}'\right)}$
be the map defined as $G_{i}\left(z\right)=\left(z,F_{i}\left(z\right)\right)$,
whose components are in $\mathcal{A}_{m_{i}',n_{i}',r_{i}}$. Then
the maps $H_{i}:=\Pi_{k}^{2\left(m+n\right)}\circ G_{i}$ and the
quadrants $Q_{i}$ satisfy the required properties. 
\end{proof}

\subsection{Proof of Theorem A\label{subsec:Proof-of-o-minimality}}

We use Gabrielov's approach, as illustrated in \cite[2.1-2.9]{vdd:speiss:gen}. 
\begin{defn}
\label{def: sub-lambda-set}Le $\Lambda_{n}$ be the collection of
all $\mathcal{A}$-semianalytic subsets of the unit cube $\left[-1,1\right]^{n}$.
Following \cite[Definition 2.1]{vdd:speiss:gen}, we say that a set
$B\subseteq\mathbb{R}^{n}$ is a \emph{sub-$\Lambda$-set} if there
exist $m\geq n$ and a set $A\in\Lambda_{n}$ such that $B=\Pi_{n}^{m}\left(A\right)$. 
\end{defn}
The collection $\Lambda=\left(\Lambda_{n}\right)_{n\in\mathbb{N}}$
clearly satisfies conditions (I),(II) and (III) of \cite[2.3]{vdd:speiss:gen}.
Condition (IV), namely the $\Lambda$-Gabrielov property, is also
satisfied by Proposition \ref{prop: trivial manifolds for subanal sets},
since, up to composing with a homothety, we may assume that the trivial
manifolds in the proposition are contained in the unit cube. It follows
that $\Lambda$ satisfies the Theorem of the Complement \cite[2.7]{vdd:speiss:gen}
and \cite[Corollary 2.8]{vdd:speiss:gen}. 
\begin{defn}
\label{def: decomposition of Rn}Let $n\in\mathbb{N}$ and $x=\left(x_{1},\ldots,x_{n}\right)$.
For $u\in\mathbb{R}$, define

\begin{align*}
\tau\left(u\right) & =\begin{cases}
u & \text{if}\ |u|\leq1\\
1/u & \text{if}\ |u|>1
\end{cases}.
\end{align*}
Let $\iota\subseteq\left\{ 1,\ldots,n\right\} $. Define:

\[
R_{\iota}^{n}:=\left\{ x\in\mathbb{R}^{n}:\ |x_{i}|\leq1\ \text{for\ }i\in\iota,\ |x_{i}|>1\ \text{for}\ i\notin\iota\right\} ,
\]

\[
I_{\iota}^{n}:=\left\{ x\in\mathbb{R}^{n}:\ |x_{i}|\leq1\ \text{for\ }i\in\iota,\ |x_{i}|<1\ \text{for}\ i\notin\iota\right\} ,
\]
\[
\xyC{0mm}\xyL{0mm}\xymatrix{\tau_{\iota}^{n}\colon & R_{\iota}^{n}\ar[rrrr] & \  & \  & \  & I_{\iota}^{n}\\
 & x\ar@{|->}[rrrr] &  &  &  & \left(\tau\left(x_{1}\right),\ldots,\tau\left(x_{n}\right)\right)
}
.
\]
\end{defn}
\begin{rem}
\label{rem: tau i a bijection}Notice that $\tau_{\iota}^{n}:R_{\iota}^{n}\to I_{\iota}^{n}$
is a bijection, hence it commutes with the boolean set operations.
In particular, for $A\subseteq R_{\iota}^{n}$, we have $\tau_{\iota}^{n}\left(R_{\iota}^{n}\setminus A\right)=I_{\iota}^{n}\setminus\tau_{\iota}^{n}\left(A\right)$.
Let $\kappa\subseteq\left\{ 1,\ldots,n\right\} $ and $\pi$ be the
projection of $\mathbb{R}^{n}$ onto the vector subspace spanned by
the coordinates $\left\{ x_{i}:\ i\in\kappa\right\} $. Let $\iota^{*}=\iota\cap\kappa$.
Since $\tau_{\iota}^{n}$ acts coordinate-wise, we have that $\tau_{\iota^{*}}^{|\kappa|}\left(\pi\left(A\right)\right)=\pi\left(\tau_{\iota}^{n}\left(A\right)\right)$.
Finally, note that the sets $R_{\iota}^{n}$ partition $\mathbb{R}^{n}$
and that the union of all the sets $I_{\iota}^{n}$ is the closed
unit cube.\end{rem}
\begin{void}
\label{vuoto: model comp and o-min}We now prove the model-completeness
and o-minimality of $\mathbb{R}_{\mathcal{A}}$.
\end{void}
Let $\tilde{f_{1}},\ldots,\tilde{f_{k}}$ be as in Definition \ref{def:structures},
i.e. $\tilde{f_{i}}$ coincides with some $f_{i}\in\mathcal{A}_{n',n'',1}$
(with $n'+n''=n$) inside the unit cube, and is zero outside the cube.
Let $P\left(x,y\right)\in\mathbb{Q}\left[x,y\right]$ be a polynomial
in $n+k$ variables. It is easy to see that, if $A=\left\{ x\in R_{\iota}^{n}:\ P\left(x,\tilde{f_{1}}\left(x\right),\ldots,\tilde{f_{k}}\left(x\right)=0\right)\right\} $,
then the set $\tau_{\iota}^{n}\left(A\right)$ is an $\mathcal{A}$-basic
set (this follows from the fact that the unbounded variables appear
only semi-algebraically in the formula defining $A$). Now, routine
manipulations show that every set $A\subseteq\mathbb{R}^{n}$ definable
in the structure $\mathbb{R}_{\mathcal{A}}$ is of the form 
\[
A=\bigcup_{\iota\subseteq\left\{ 1,\ldots,n\right\} }\bigcup_{\iota^{*}\subseteq\left\{ 1,\ldots,m\right\} }A_{\iota,\iota^{*}}\text{,\ with}
\]
\[
A_{\iota,\iota^{*}}=\left\{ x\in R_{\iota}^{n}:\ \mathcal{Q}_{1}y_{1}\ldots\mathcal{Q}_{m}y_{m}\ y\in R_{\iota^{*}}^{m}\ \land\ P\left(x,y,\tilde{f_{1}}\left(x,y\right),\ldots,\tilde{f_{k}}\left(x,y\right)\right)=0\right\} ,
\]
where $y=\left(y_{1},\ldots,y_{m}\right)$, $\mathcal{Q}_{i}$ is
either the existential or the universal quantifier, $P$ is a polynomial
in $n+m+k$ variables and $\tilde{f_{i}}$ are as in Definition \ref{def:structures}.
Using Remark \ref{rem: tau i a bijection}, we immediately see that
$\tau_{\iota}^{n}\left(A_{\iota,\iota^{*}}\right)$ is definable in
the structure $\left(I,\Lambda\right)$ (in particular it is a sub-$\Lambda$-set,
by \cite[Corollary 2.8]{vdd:speiss:gen}) and that $A_{\iota,\iota^{*}}$
is existentially definable and has finitely many connected components. 
\begin{rem}
\label{rem: bounded sets are sub-lambda}Let $A\subseteq\left[-1,1\right]^{n}$
be an $\mathbb{R}_{\mathcal{A}}$-definable set. Using the above decomposition
of $A$ and the substitution $y_{i}\mapsto1/y_{i}$ whenever $y_{i}>1$,
one can see that $A$ is actually a sub-$\Lambda$-set. In particular,
the Parametrisation Theorem \ref{thm: param subanal} applies to $A$.\end{rem}
\begin{void}
\label{vuoto: polybdd}We now prove polynomial boundedness.\end{void}
\begin{rem}
\label{rem:composition in one var}Let $f\in\mathcal{A}_{1}$. Then,
since the support of $\mathcal{T}\left(f\right)$ is a well ordered
set, $f$ is normal. It follows from Lemma \ref{lem:composition with monomials}
that if $g$ is a function of one variable, which is obtained as a
finite composition of functions in $\mathcal{A}$ and vanishing at
0, then $g\in\mathcal{A}$.
\end{rem}
Let $\varepsilon>0$ and let $f:\left(0,\varepsilon\right)\to\mathbb{R}$
be definable in $\mathbb{R}_{\mathcal{A}}.$ We proceed as in \cite[Theorem B, pag. 4419]{vdd:speiss:gen},
using Theorem \ref{thm: param subanal} and Remark \ref{rem:composition in one var}
instead of 9.6 and \cite[Lemma 7.10]{krs}%
\footnote{Notice that in \cite[Lemma 7.10]{krs} one should replace $\lambda$
by $\lambda^{-1/\alpha}$ in the blow-up $\mathbf{r}^{\rho,\lambda}$
and in the definition of $g$.%
} instead of 9.9. We conclude that there exists $h\in\mathcal{A}_{1}$
such that $f\left(t\right)=h\left(t\right)$ for all sufficiently
small $t>0$. In particular, there exist $\alpha\in\mathbb{K}$ and
a unit $u\left(t\right)\in\mathcal{A}_{1}$ such that $f\left(t\right)=t^{\alpha}u\left(t\right)$.
This finishes the proof of Theorem A.

\section{Vertical monomialisation\label{sec:Vertical-monomialisation}}

In this section we prove Theorem B.
\begin{proviso}
\textbf{\label{Proviso: admissible transf} }Consider the ramifications
$r_{m+i}^{d,\pm}:\hat{I}_{m,n,r}\to\hat{I}_{m,n,r}$ in Definition
\ref{Def: elementary transformations} and let $\sigma_{m+i}^{\pm}$
be the restriction of $r_{m+i}^{1,\pm}$ to the half-space $\left\{ y_{i}'\geq0\right\} $.
In this section we will consider the transformations $\sigma_{m+i}^{\pm}$
as elementary transformations (and extend accordingly the notion of
admissible transformation in Definition \ref{Def: admiss transf}).
Notice that $\sigma_{m+i}^{+}\left(\hat{I}_{m+1,n-1,r}\right)\cup\sigma_{m+i}^{-}\left(\hat{I}_{m+1,n-1,r}\right)=\hat{I}_{m,n,r}$.
\end{proviso}
The main result of this section is the following.
\begin{thm}
\label{thm: monomialis of def functions}Let $D\subseteq\mathbb{R}^{N}$
and $\eta:D\to\mathbb{R}$ be an $\mathbb{R}_{\mathcal{A}}$-definable
function such that the graph of $\eta$ is a sub-$\Lambda$-set. Then
there exist a polyradius $r$ and a finite family $\mathcal{F}=\left\{ \rho_{i}:\hat{I}_{m_{i},n_{i},r_{i}}\to\mathbb{R}^{N}\right\} _{i=1,\ldots,M}$
(with $m_{i}+n_{i}=N$) of admissible transformations such that for
all $i=1,\ldots,M$ the function $\eta\circ\rho_{i}$ belongs to $\mathcal{A}_{m_{i},n_{i},r_{i}}$
and its germ at zero is normal, and $D\cap\hat{I}_{0,N,r}\subseteq\bigcup_{i=1}^{M}\rho_{i}\left(\hat{I}_{m_{i},n_{i},r_{i}}\right)$.
\end{thm}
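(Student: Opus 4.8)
The statement is a ``vertical'' version of the geometric monomialisation Theorem~\ref{thm: geom monomial}: there we monomialised germs in $\mathcal{A}_{m,n}$, here we must monomialise an arbitrary $\mathbb{R}_{\mathcal{A}}$-definable function whose graph is a sub-$\Lambda$-set. The plan is to reduce, via the parametrisation machinery of Section~\ref{sec:Parametrisation-of--subanalytic-1}, to a situation where $\eta$ is (piecewise) a quotient of $\mathcal{A}$-functions, and then to apply the o-minimal Preparation Theorem of \cite{vdd:speiss:preparation_theorems} (as in Lemma~\ref{lem: weak monomialisation}) followed by a Newton--Puiseux style solving procedure adapted to the monomialisation algorithm of Subsection~\ref{sub:Monomialisation-of-generalised}.

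First I would apply the Parametrisation Theorem~\ref{thm: param subanal} to the graph $\Gamma(\eta)\subseteq\mathbb{R}^{N+1}$, which is a bounded sub-$\Lambda$-set after intersecting with a box: this writes $\Gamma(\eta)$ as a finite union of images $H_i(Q_i)$ of sub-quadrants under maps with components in $\mathcal{A}$. Since $\eta$ is a function, on each piece the projection to the first $N$ coordinates is (after possibly refining) a diffeomorphism onto its image, so that on $H_i(Q_i)\cap\hat{I}_{0,N,r}$ the function $\eta$ is the composition of the $(N{+}1)$-st component of $H_i$ with the inverse of the restricted projection. The key step, carried out in Lemma~\ref{lem: weak monomialisation} (``weak monomialisation''), is to invert this restricted projection: using the o-minimal Preparation Theorem one finds a ``principal part'' of each component of $H_i$ with respect to a chosen variable, which lets one solve the defining equation for the inverse map up to a monomial times a unit, after composing with suitable ramifications, Tschirnhausen translations, linear transformations and blow-up charts --- exactly the elementary transformations of Definition~\ref{Def: elementary transformations}. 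One shows that these transformations are admissible and that, modulo them, $\eta$ becomes a germ in some $\mathcal{A}_{m_i,n_i}$ (here one uses closure of $\mathcal{A}$ under the axioms of \ref{emp:properties of the morph}, in particular implicit functions in the $y$-variables and composition, together with Lemma~\ref{lem:composition with monomials}).

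Once $\eta\circ\rho_i$ is known to lie in $\mathcal{A}_{m_i,n_i,r_i}$, I would apply the geometric monomialisation Theorem~\ref{thm: geom monomial} to the finitely many germs $\eta\circ\rho_i$ (together with the coordinate functions, to keep track of the covering condition): this produces, for each $i$, a further finite family of admissible transformations $\rho_{i,j}$ such that each $\eta\circ\rho_i\circ\rho_{i,j}$ is normal and linearly ordered by division, and such that $\hat{I}_{m_i,n_i,r_i'}\subseteq\bigcup_j\rho_{i,j}(\hat{I}_{\cdot,\cdot,\cdot})$. Composing, the family $\{\rho_i\circ\rho_{i,j}\}$ is the required family: the composition of admissible transformations is admissible (Definition~\ref{Def: admiss transf}), each $\eta\circ(\rho_i\circ\rho_{i,j})$ is a normal germ in the appropriate algebra, and the covering condition $D\cap\hat{I}_{0,N,r}\subseteq\bigcup\rho_i\circ\rho_{i,j}(\cdots)$ follows by chaining the two covering statements, exactly as in the proof of Theorem~\ref{thm: geom monomial} (using the remark there about finite families of blow-up parameters $\lambda$ sufficing, by a compactness argument as in \cite[p.~4406]{vdd:speiss:gen}).

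\textbf{Main obstacle.} The genuinely hard part is the weak monomialisation step, i.e. inverting the parametrising maps and showing that the resulting inverse (hence $\eta$) becomes a member of $\mathcal{A}$ after finitely many admissible transformations. The difficulty is that, unlike in \cite{vdd:d}, we have no Weierstrass Preparation Theorem, so we cannot reduce to the polynomial case; instead we must solve the parametrising equations ``by hand'' via a Newton--Puiseux procedure, and the termination of this procedure --- controlled by a drop in the order of regularity in a distinguished $y$-variable, as in the $d>1$ case of the proof of Theorem~\ref{thm: monomialisation} --- requires careful bookkeeping. Additionally, the o-minimal Preparation Theorem only provides the principal part piecewise and with exponents a priori in $\mathbb{R}$; one must check that these exponents actually lie in the field $\mathbb{K}$ of admissible exponents (using Theorem~A, which identifies the field of exponents of $\mathbb{R}_{\mathcal{A}}$ with $\mathbb{K}$), so that the corresponding ramifications $r_i^\gamma$ with $\gamma\in\mathbb{K}^{>0}$ are legitimate elementary transformations. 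These two points --- termination of the solving procedure and rationality/admissibility of the exponents --- are where the real work lies.
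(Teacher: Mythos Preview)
Your high-level strategy---parametrise the graph, invert the parametrisation, then apply the geometric monomialisation Theorem~\ref{thm: geom monomial}---is exactly the paper's route, and it is correct that the weak monomialisation Lemma~\ref{lem: weak monomialisation} and the o-minimal Preparation Theorem are key inputs. However, there is a genuine gap in your treatment of the inversion step, and it is more serious than the ``bookkeeping'' you describe.

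The weak monomialisation lemma does \emph{not} invert the parametrising map. It only shows that the (already existing, definable) solution $\Phi=(g_1,\ldots,g_N)^{-1}$ can be made \emph{weakly} normal: $\varphi_i\circ\rho(x)=x^{\alpha_i}U_i(x)$ where $U_i$ is a bounded definable function, \emph{not} an element of $\mathcal{A}$. From this alone you cannot conclude that $\eta\circ\rho\in\mathcal{A}$; the implicit function axiom in~\ref{emp:properties of the morph} only applies once the system $f_i(x,u)=x_i-g_i(u)=0$ has been made regular of order~$1$ in each $u$-variable, and there is no a priori reason why this holds. To reach that situation the paper must carry out a \emph{vertical} monomialisation of the germs $f_i(x,u)$, i.e.\ one that uses only admissible transformations $\rho=\langle\rho',\rho''\rangle$ for which $\rho'$ does not depend on $u$ (Definition~\ref{def:vertical transformations}). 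This is essential: an ordinary blow-up $(x,u)\mapsto(xu,u)$ would mix the horizontal and vertical variables and destroy the possibility of reading off $\eta$ from the solved system. The vertical constraint rules out the linear changes of variables that make a series regular in a chosen variable in the proof of Theorem~\ref{thm: monomialisation}; the paper replaces this by a Noetherian-type argument (Lemma~\ref{lem:noether}) showing that, after a vertical tree of transformations, the Taylor coefficients of $F_0$ in $v=u_{j(\Phi)}$ generate a finitely generated ideal, which then allows one to factor out a monomial and obtain regularity in $v$. The subsequent order-drop argument (\ref{vuoto: chute d'ordre}) does use the weak normality of $\varphi_{j(\Phi)}$, but in a specific way: it dictates the centres of the vertical blow-ups $\rho_\lambda$.

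All of this is packaged by the paper into three interlocked statements $\textbf{(A)}_N$, $\textbf{(B)}_{N,l}$, $\textbf{(C)}_{N,l}$ (Theorem~\ref{thm: ABC}), proved by simultaneous induction on $(N,l)$ lexicographically: $\textbf{(B)}$ is the vertical monomialisation of a single germ $F(x,u)$ along the graph of $\Phi$; $\textbf{(C)}$ uses $\textbf{(B)}$ to solve a full system $f_1=\cdots=f_N=0$ down to $u=0$; and $\textbf{(A)}_N$ (which is the theorem you want) follows from $\textbf{(C)}_{N,N}$ together with $\textbf{(A)}_{N-1}$ for the small-dimensional exceptional set $S$. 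Your outline collapses $\textbf{(B)}$ and $\textbf{(C)}$ into a single sentence (``solve the defining equation for the inverse map up to a monomial times a unit''), and in particular misses the need for vertical transformations, the finiteness Lemma~\ref{lem:noether}, and the interleaved induction that lets you handle the set $S$ of smaller dimension thrown off at each stage.
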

Before proving the above theorem, we give a list of consequences.
\begin{cor}
\label{cor: def functions are terms}Let $D\subseteq\mathbb{R}^{N}$
and $\eta:D\to\mathbb{R}$ be an $\mathbb{R}_{\mathcal{A}}$-definable
function. Then there exist finitely many terms $t_{1},\ldots,t_{M}$
of the language $\mathcal{L}:=\mathcal{L}_{\mathcal{A}}\cup\left\{ ^{-1}\right\} \cup\left\{ \sqrt[n]{\ }:\ n\in\mathbb{N}\right\} $
such that 
\[
\forall x\in D\ \exists i\in\left\{ 1,\ldots,M\right\} \ \ \eta\left(x\right)=t_{i}\left(x\right).
\]
\end{cor}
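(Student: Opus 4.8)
\textbf{Proof sketch of Corollary \ref{cor: def functions are terms}.}
The plan is to reduce the statement to the bounded case and then apply Theorem \ref{thm: monomialis of def functions}. First I would observe that it suffices to prove the claim when $D\subseteq\mathbb{R}^N$ is bounded: using the decomposition of definable sets into the pieces $R_\iota^N$ from Definition \ref{def: decomposition of Rn} and the bijections $\tau_\iota^N$, one writes $D=\bigcup_{\iota}D\cap R_\iota^N$, applies the substitution $x_i\mapsto 1/x_i$ for the coordinates $i\notin\iota$ to move to a bounded situation, solves there, and then substitutes back; the symbol $^{-1}$ in $\mathcal{L}$ is exactly what is needed to undo this substitution inside a term. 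By Remark \ref{rem: bounded sets are sub-lambda}, a bounded $\mathbb{R}_{\mathcal{A}}$-definable set is a sub-$\Lambda$-set, and since the graph of $\eta\restriction(D\cap[-1,1]^N)$ is then also bounded and definable, it too is a sub-$\Lambda$-set. Hence Theorem \ref{thm: monomialis of def functions} applies to $\eta$ restricted to the bounded pieces.

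Next I would apply Theorem \ref{thm: monomialis of def functions} to the (bounded) function $\eta$: there is a polyradius $r$ and a finite family of admissible transformations $\rho_i:\hat I_{m_i,n_i,r_i}\to\mathbb{R}^N$ such that each $\eta\circ\rho_i\in\mathcal{A}_{m_i,n_i,r_i}$ has normal germ at $0$, and the images $\rho_i(\hat I_{m_i,n_i,r_i})$ cover $D\cap\hat I_{0,N,r}$. By Remark \ref{Rem: admiss transf are in the algebra} the components of each $\rho_i$ lie in $\mathcal{A}_{m_i,n_i,r_i}$, so each such component — after rescaling to the unit box using the last axiom of \ref{vuoto:conditions on functions}, and composing with the reciprocal $^{-1}$ and the roots $\sqrt[n]{\ }$ to account for ramifications of type $r_i^\gamma$ with $\gamma\in\mathbb{K}$ (write $\gamma=p/q$ and use $x\mapsto(x^p)^{1/q}$) — is given by an $\mathcal{L}$-term in $x$. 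Likewise $\eta\circ\rho_i$, being in $\mathcal{A}_{m_i,n_i,r_i}$, is an $\mathcal{L}_{\mathcal{A}}$-term in the $\rho_i$-coordinates after rescaling. To express $\eta$ itself as a term, I would need a term-definable right inverse for $\rho_i$ on the relevant quadrant: each elementary transformation in Definition \ref{Def: elementary transformations} is, on the locus where the relevant blown-up variable is nonzero, invertible by a map whose components are again built from $+,-,\cdot,{}^{-1}$ and roots (division undoes a blow-up chart, subtraction of $h$ undoes a Tschirnhausen translation, $y_i\mapsto\pm y_i^{1/d}$ undoes a ramification in a $y$-variable); composing these gives an $\mathcal{L}$-term $s_i$ with $\rho_i\circ s_i=\mathrm{id}$ on $\rho_i(\hat I_{m_i,n_i,r_i})$ minus the coordinate hyperplanes. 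One then sets $t_i:=(\eta\circ\rho_i)\circ s_i$, which is an $\mathcal{L}$-term, and on $\rho_i(\hat I_{m_i,n_i,r_i})$ minus the hyperplanes one has $\eta=t_i$. The coordinate-hyperplane locus inside each piece has strictly smaller ambient dimension, so by induction on $N$ (the base case $N=0$ being trivial) those leftover pieces are handled by finitely many further terms.

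Finally I would assemble: since the $\rho_i(\hat I_{m_i,n_i,r_i})$ cover $D\cap\hat I_{0,N,r}$ and we have shifted, reflected and reciprocal copies covering all of $D$, every $x\in D$ lies in one of finitely many pieces, on each of which $\eta$ agrees with one of the finitely many terms produced; this is exactly the disjunctive conclusion $\forall x\in D\ \exists i\ \eta(x)=t_i(x)$.

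\textbf{Main obstacle.} The delicate point is bookkeeping the passage between the ``local at $0$'' statement of Theorem \ref{thm: monomialis of def functions} and the global statement over all of $D$: one must cover $D$ not just by a single chart at the origin but, via Remark \ref{rem: how to construct examples}-style translations (the maps $h_{a,\sigma}$ of Definition \ref{def:semi and subanalytic sets}) together with a compactness argument, by finitely many translated/reflected/reciprocal copies, and then check that on each copy the inverse of the relevant admissible transformation really is given by an $\mathcal{L}$-term — in particular that ramifications with irrational-in-$\mathbb{K}$ but rational exponents are captured by the finitely many root symbols $\sqrt[n]{\ }$. The algebraic verification that each elementary transformation has a term-definable partial inverse is routine but must be done case by case.
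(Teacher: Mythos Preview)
Your overall strategy matches the paper's: reduce to the bounded situation via the $\tau_\iota$-maps, apply Theorem \ref{thm: monomialis of def functions}, invert each admissible transformation by an $\mathcal{L}$-term off a small set, handle the leftover by induction on $N$, and globalise by compactness over translates $h_{a,\sigma}$.

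There is, however, one genuine gap. You assert that once the domain $D$ is bounded, the graph of $\eta\restriction D$ is automatically bounded and hence a sub-$\Lambda$-set; this is false (take $\eta(x)=1/x$ on $(0,1)$), and Theorem \ref{thm: monomialis of def functions} explicitly requires the graph to be a sub-$\Lambda$-set. The paper repairs this by applying $\tau_\iota^{N+1}$ to $\Gamma(\eta)\subseteq D\times\mathbb{R}$ rather than just $\tau_\iota^N$ to $D$: one also partitions according to $|\eta|\le 1$ versus $|\eta|>1$, and on the latter piece works with $1/\eta$ in place of $\eta$. This is precisely the inversion you already perform on the domain coordinates, applied to the value coordinate as well; once done, the resulting function $\tilde\eta=\tau_\iota^{N+1}\circ\eta\circ(\tau_\iota^{N+1})^{-1}$ has a sub-$\Lambda$ graph and Theorem \ref{thm: monomialis of def functions} applies.

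A secondary imprecision: the leftover set you discard is the image $\rho_i(S)$, which still sits in $\mathbb{R}^N$, so ``induction on $N$'' does not apply to it directly. The paper handles this by a preliminary reduction: whenever $\dim D<N$, cell decomposition lets one write $D$ (up to permuting coordinates) as the graph of a definable map $\eta':D'\to\mathbb{R}$ with $D'\subseteq\mathbb{R}^{N-1}$, so that $\eta(x_1,\ldots,x_N)=\eta(x_1,\ldots,x_{N-1},\eta'(x_1,\ldots,x_{N-1}))$ becomes a function of $N-1$ variables, to which the inductive hypothesis genuinely applies.
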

\begin{proof}
The proof is by induction on $N$. If $N=1$, we can conclude by \ref{vuoto: polybdd},
after using the substitution $x\mapsto-x$ on $D\cap\mathbb{R}^{<0}$,
the substitution $x\mapsto1/x$ on $D\cap\mathbb{R}\setminus[-1,1]$
and arguing with $1/\eta$ on $\left\{ x\in D:\ |\eta\left(x\right)|>1\right\} $.
Hence we may suppose $N>1$. Notice that if $\text{dim}\left(D\right)<N$,
then by cell decomposition we may assume that, up to a permutation
of the variables, $D$ is the graph of some definable function $\eta':D'\to\mathbb{R}$,
where $D'\subseteq\mathbb{R}^{N-1}$. Hence, the function $\eta\left(x_{1},\ldots,x_{N}\right)$
coincides with the function $\eta\left(x_{1}\ldots,x_{N-1},\eta'\left(x_{1},\ldots,x_{N-1}\right)\right)$
and we can conclude by the inductive hypothesis. 

Let $\Gamma\left(\eta\right)$ be the graph of $\eta$. Let $A=D\times\mathbb{R}$
and consider the partition of $A$ given by the sets $A_{\iota,\iota^{*}}$
as in Subsection \ref{subsec:Proof-of-o-minimality}. Then each set
$\Gamma\left(\eta\right)\cap A_{\iota,\iota^{*}}$ is either empty
or the graph of an $\mathbb{R}_{\mathcal{A}}$-definable function
and $\tau_{\iota}^{N+1}\left(\Gamma\left(\eta\right)\cap A_{\iota,\iota^{*}}\right)$
is a sub-$\Lambda$-set. Moreover,

\[
\left(x,y\right)\in\Gamma\left(\eta\right)\cap A_{\iota,\iota^{*}}\Leftrightarrow\tau_{\iota}^{N+1}\left(x,y\right)\in\Gamma\left(\tau_{\iota}^{N+1}\circ\eta\circ\left(\tau_{\iota}^{N+1}\right)^{-1}\right)\cap\tau_{\iota}^{N+1}\left(A_{\iota,\iota^{*}}\right),
\]
hence it suffices to prove the corollary for the function $\tilde{\eta}:=\tau_{\iota}^{N+1}\circ\eta\circ\left(\tau_{\iota}^{N+1}\right)^{-1}$,
whose graph is a sub-$\Lambda$-set. We apply Theorem \ref{thm: monomialis of def functions}
to the function $\tilde{\eta}:\tilde{D}\to\mathbb{R}$ and obtain
in particular that the function $\tilde{\eta}\circ\rho_{i}:\hat{I}_{m_{i},n_{i},r_{i}}\to\mathbb{R}$
is an $\mathcal{L}$-term $t_{i}$. Arguing by induction on the length
of $\rho_{i}$, it is easy to see that there is a closed sub-$\Lambda$-set
$S\subseteq\hat{I}_{m_{i},n_{i},r_{i}}$ of dimension strictly smaller
than $N$ such that $\rho_{i}\restriction\hat{I}_{m_{i},n_{i},r_{i}}\setminus S$
is a diffeomorphism onto its image and that the components of the
map $\left(\rho_{i}\restriction\hat{I}_{m_{i},n_{i},r_{i}}\setminus S\right)^{-1}$
are $\mathcal{L}$-terms. Hence, for $x\in\rho_{i}\left(\hat{I}_{m_{i},n_{i},r_{i}}\setminus S\right)$
we have that $\tilde{\eta}\left(x\right)=t_{i}\circ\rho_{i}^{-1}\left(x\right)$
is an $\mathcal{L}$-term. Notice that the function $\tilde{\eta}\restriction\rho_{i}\left(S\right)$
can be dealt with by using the inductive hypothesis. Hence we have
proved the corollary for $\tilde{\eta}\restriction\tilde{D}\cap\hat{I}_{0,N,r}$,
for some polyradius $r$. 

By applying the same argument to the function $\tilde{\eta}\left(x+a\right)$
for every point $a$ of the closed unit cube, we obtain the full statement
of the corollary by a compactness argument.
\end{proof}
As an immediate consequence of the above corollary we obtain:
\begin{proof}
[Proof of Theorem B]Let $A\subseteq\mathbb{R}^{N}$ be an $\mathbb{R}_{\mathcal{A}}$-definable
set. We prove by induction on $N$ that $A$ is quantifier free definable
in the language $\mathcal{L}:=\mathcal{L}_{\mathcal{A}}\cup\left\{ ^{-1}\right\} \cup\left\{ \sqrt[n]{\ }:\ n\in\mathbb{N}\right\} $.
This is clear if $N=1$. Hence, we may suppose that $N>1$ and that,
by cell decomposition, $A$ is a cell; in particular, there exist
a cell $C\subseteq\mathbb{R}^{N-1}$ and $\mathbb{R}_{\mathcal{A}}$-definable
functions $f,g:C\to\mathbb{R}$ such that $A$ is either the graph
of $f\restriction C$, or the epigraph of $f\restriction C$, or the
set $\left\{ \left(x,y\right)\in\mathbb{R}^{N-1}\times\mathbb{R}:\ x\in C\ \land\ g\left(x\right)<y<f\left(x\right)\right\} $.
By Corollary \ref{cor: def functions are terms}, there is a finite
partition of $C$ into definable sets such that on every set of the
partition the functions $f$ and $g$ coincides with some $\mathcal{L}$-terms
$t_{1}$ and $t_{2}$, respectively. By the inductive hypothesis,
each set of the partition is quantifier free definable in the language
$\mathcal{L}$, hence so is $A$, in each of the above cases.
\end{proof}
Finally, we obtain the following Rectilinearisation Theorem, in the
spirit of \cite{hironaka_real_analytic}.
\begin{namedthm}
{Rectilinearisation Theorem}\label{cor: rectilinearisation}Let $D\subseteq\mathbb{R}^{N}$
be a sub-$\Lambda$-set. Then there exist a neighbourhood $W$ of
0 in $\mathbb{R}^{N}$ and a finite family $\mathcal{F}=\left\{ \left(\rho_{i},Q_{i}\right):\ i=1,\ldots,M\right\} $,
where $\rho_{i}:\hat{I}_{m_{i},n_{i},r{}_{i}}\to\mathbb{R}^{N}$ is
an admissible transformation (with $m_{i}+n_{i}=N$) and $Q_{i}\subseteq\overline{Q_{i}}\subseteq\hat{I}_{m_{i},n_{i},r_{i}}$
is a sub-quadrant, such that $\rho_{i}\restriction Q_{i}:\ Q_{i}\to\rho_{i}\left(Q_{i}\right)$
is a diffeomorphism and 
\[
W\cap D=\bigcup_{i=1}^{M}\rho_{i}\left(Q_{i}\right).
\]

\end{namedthm}
The Rectilinearisation Theorem is a consequence of Proposition \ref{prop: strong rectilin}
below. We need some definitions.
\begin{notation}
Let $N\in\mathbb{N}$. For the rest of the section, whenever we write
an admissible transformation $\rho:\hat{I}_{m_{\rho},n_{\rho},r_{\rho}}\to\hat{I}_{m,n,r}$,
we will implicitly assume that $m+n=m_{\rho}+n_{\rho}=N$ and that
$r=\left(r',r''\right)\in\left(0,\infty\right)^{m}\times\left(0,\infty\right)^{n},\ r_{\rho}=\left(r_{\rho}',r_{\rho}''\right)\in\left(0,\infty\right)^{m_{\rho}}\times\left(0,\infty\right)^{n_{\rho}}$
are polyradii in $\mathbb{R}^{N}$.
\end{notation}
Recall that, if $f\in\mathcal{A}_{m,n}$ and $\rho:\hat{I}_{m_{\rho},n_{\rho},r_{\rho}}\to\hat{I}_{m,n,r}$
is an admissible transformation, then $f\circ\rho\in\mathcal{A}_{m_{\rho},n_{\rho}}$.
The next definition is intended to extend this property to the case
when the arity of $f$ is bigger than $N$.
\begin{defn}
\label{def: rho respects f}Let $N,l\in\mathbb{N}$ and $\hat{m},\hat{n},k_{1},k_{2}\in\mathbb{N}$
with $\hat{m}+\hat{n}=N$ and $k_{1}+k_{2}=l$. Let $\rho:\hat{I}_{m_{\rho},n_{\rho},r_{\rho}}\to\hat{I}_{m,n,r}$
be an admissible transformation. We say that $\rho$ \emph{respects}
the elements of $\mathcal{A}_{\hat{m}+k_{1},\hat{n}+k_{2}}$ if $m\geq\hat{m}$. 

Let $s=\left(s_{1},s_{2}\right)\in\left(0,\infty\right)^{k_{1}}\times\left(0,\infty\right)^{k_{2}}$
be a polyradius and $\hat{r}:=\left(r',s_{1},r'',s_{2}\right)$, $\widehat{r_{\rho}}:=\left(r_{\rho}',s_{1},r_{\rho}'',s_{2}\right)$.
Let $x=\left(x_{1},\ldots,x_{N}\right),\ x'=\left(x_{1}',\ldots,x_{N}'\right),\ u=\left(u_{1},\ldots,u_{l}\right),\ u'=\left(u_{1}',\ldots,u_{l}'\right)$. 

Define the map
\[
\xyC{0mm}\xyL{0mm}\xymatrix{\hat{\rho}\colon & \hat{I}_{m_{\rho}+k_{1},n_{\rho}+k_{2},\widehat{r_{\rho}}}\ar[rrrr] & \  & \  & \  & \hat{I}_{m+k_{1},n+k_{2},\hat{r}}\\
 & \left\langle x',u'\right\rangle \ar@{|->}[rrrr] &  &  &  & \left\langle x,u\right\rangle 
}
,
\]
where $\left\langle x',u'\right\rangle $ is the ordered tuple $\left(x_{1}',\ldots,x_{m_{\rho}}',u_{1}',\ldots,u_{k_{1}}',x_{m_{\rho}+1}',\ldots,x_{N}',u_{k_{1}+1}',\ldots,u_{l}'\right)$,
$\left\langle x,u\right\rangle $ is the ordered tuple $\left(x_{1},\ldots,x_{m},u_{1},\ldots,u_{k_{1}},x_{m+1},\ldots,x_{N},u_{k_{1}+1},\ldots,u_{l}\right)$,
and $x=\rho\left(x'\right)$ , $u=u'$. The map $\hat{\rho}$ is an
admissible transformation, which we call a \emph{trivial extension}
of $\rho$. 

Notice that $\rho$ respects $f\in\mathcal{A}_{\hat{m}+k_{1},\hat{n}+k_{2}}$
if and only if $f\circ\hat{\rho}\in\mathcal{A}_{m_{\rho}+k_{1},n_{\rho}+k_{2},\widehat{r_{\rho}}}$.$ $
\end{defn}

In the next definition we will consider two sets of variables, the
``horizontal variables'', usually denoted by $x$, and the ``vertical
variables'', usually denoted by $u$. We will define a special type
of admissible transformations, the ``vertical admissible transformations'',
which respect in some way the partition of the set of variables into
horizontal and vertical. The aim is to obtain a class of admissible
transformations $\rho:\left(x',u'\right)\mapsto\left(x,u\right)$
with the property that, given $f\in\mathcal{A}$, if we can solve
explicitly the equation $f\circ\rho\left(x',u'\right)=0$ with respect
to $u'$, then we can solve explicitly the equation $f\left(x,u\right)=0$
with respect to $u$.
\begin{defn}
\label{def:vertical transformations}Let $r=\left(s,t\right),\ r_{\rho}=\left(s_{\rho},t_{\rho}\right)\in\left(0,\infty\right)^{N}$$\times\left(0,\infty\right)^{l}$
be polyradii. Consider a map
\[
\xyC{0mm}\xyL{0mm}\xymatrix{\rho\colon & \hat{I}_{r_{\rho}}\ar[rrrr] & \  & \  & \  & \hat{I}_{r}\\
 & \langle x',u'\rangle\ar@{|->}[rrrr] &  &  &  & \langle x,u\rangle
}
,
\]
where $\hat{I}_{r_{\rho}}$ is either $\hat{I}_{m_{\rho},n_{\rho}+l,r_{\rho}}$
(type 1) or $\hat{I}_{m_{\rho}+l,n_{\rho},r_{\rho}}$ (type 2) and
$\hat{I}_{r}$ is either $\hat{I}_{m,n+l,r}$ (type 1) or $\hat{I}_{m+l,n,r}$
(type 2). If $\hat{I}_{r_{\rho}}$ is of type 1, then let $\left\langle x',u'\right\rangle $
be the ordered pair $\left(x',u'\right)$, whereas if $\hat{I}_{r_{\rho}}$
is of type 2, then let $\left\langle x',u'\right\rangle $ be the
ordered pair $\left(x_{1}',\ldots,x_{m_{\rho}}',u_{1}',\ldots,u_{l}',x_{m_{\rho}+1}',\ldots,x_{N}'\right)$.
If $\hat{I}_{r}$ is of type 1, then let $\left\langle x,u\right\rangle $
be the ordered pair $\left(x,u\right)$ and $\rho'=\left(\rho_{1},\ldots,\rho_{N}\right),$
$\rho''=\left(\rho_{N+1},\ldots,\rho_{N+l}\right)$, whereas if $\hat{I}_{r}$
is of type 2, then let $\left\langle x,u\right\rangle $ be the ordered
pair $\left(x_{1},\ldots,x_{m},u_{1},\ldots,u_{l},x_{m+1},\ldots,x_{N}\right)$
and $\rho'=\left(\rho_{1},\ldots,\rho_{m},\rho_{m+l+1},\ldots,\rho_{N+l}\right),$
$\rho''=\left(\rho_{m+1},\ldots,\rho_{m+l}\right)$.

We say that $\rho=\left\langle \rho',\rho''\right\rangle $ is a \emph{vertical
admissible transformation} (of type $\left(i,j\right)\in\left\{ 1,2\right\} ^{2}$)
if the following conditions are satisfied:
\begin{itemize}
\item $\hat{I}_{r_{\rho}}$ is of type $i$ and $\hat{I}_{r}$ is of type
$j$;
\item $\rho$ is an admissible transformation;
\item $\rho'$ does not depend on $u'$, hence we may write $x=\rho'\left(x'\right)$;
\item there exists a closed sub-$\Lambda$-set $S_{\rho}\subseteq\hat{I}_{m_{\rho},n_{\rho},s_{\rho}}$
such that $\text{dim}\left(S_{\rho}\right)<N$ and $\rho'\restriction\hat{I}_{m_{\rho},n_{\rho},s_{\rho}}\setminus S_{\rho}$
is a diffeomorphism onto its image. Moreover, for every $x'\in\hat{I}_{m_{\rho},n_{\rho},s_{\rho}}\setminus S_{\rho}$,
the map $u'\mapsto\rho''\left(\left\langle x',u'\right\rangle \right)$
is a diffeomorphism onto its image, and we denote by $\gamma_{\rho}$
the map $\left(x',u\right)\mapsto u'$.
\end{itemize}
Notice that $\rho\restriction\hat{I}_{r_{\rho}}\setminus S_{\rho}\times\mathbb{R}^{l}$
is a diffeomorphism onto its image.

\smallskip{}

Let $D\subseteq\mathbb{R}^{N}$ and $\Phi:D\to\mathbb{R}^{l}$ be
a map whose graph is a sub-$\Lambda$-set and let $D_{\rho}:=\left(\rho'\right)^{-1}\left(D\right)\setminus S_{\rho}$.
Suppose that $\Phi\left(\rho'\left(D_{\rho}\right)\right)\subseteq\rho''\left(\hat{I}_{r_{\rho}}\right)$.
We define the map $\Phi_{\rho}:D_{\rho}\to\mathbb{R}^{l}$ as follows:
for every $x'\in D_{\rho},\ \Phi_{\rho}\left(x'\right):=\gamma_{\rho}\left(x',\Phi\circ\rho'\left(x'\right)\right)$.
\end{defn}

Examples of admissible transformations which are not vertical are
the blow-up chart $\left(x,u\right)\mapsto\left(xu,u\right)$ and
the linear transformation $\left(x,u\right)\mapsto\left(x+cu,u\right)$,
because in these cases the first component of the transformation depends
on the vertical variable $u$. Another example is the blow-up chart
$\left(x,u_{1},u_{2}\right)\mapsto\left(x,u_{1}u_{2},u_{2}\right)$,
because the second component of the transformation is not a bijection. 

Our aim in this section is to give a monomialisation algorithm which
only uses vertical admissible transformations. We will do so at the
expenses of covering a proper subset of the domain of the functions.
Such a subset will turn out to be the graph of a definable map. This
motivates the next definition.
\begin{defn}
\label{def: property *}$ $Let $N,l\in\mathbb{N}$ with $N\geq l$.
Let $S\subseteq D\subseteq\mathbb{R}^{N}$ be sub-$\Lambda$-sets,
with $\text{dim}\left(S\right)<N$.
\begin{enumerate}
\item Let $\mathcal{F}$ be a finite family of admissible transformations
\[
\rho:\hat{I}_{m_{\rho},n_{\rho},r_{\rho}}\to\hat{I}_{m,n,r}.
\]
We say that $\mathcal{F}$ \emph{satisfies the covering} \emph{property
with respect to} $D$ if for every choice of polyradii $r_{\rho}'\leq r_{\rho}\ \left(\rho\in\mathcal{F}\right)\ $
there exists a polyradius $r^{*}\leq r$ such that
\[
\hat{I}_{m,n,r^{*}}\cap D\cap\rho\left(\hat{I}_{m_{\rho},n_{\rho},r_{\rho}'}\right)\not=\emptyset\text{\ and\ }\hat{I}_{m,n,r^{*}}\cap D\subseteq\bigcup_{\rho\in\mathcal{F}}\rho\left(\hat{I}_{m_{\rho},n_{\rho},r_{\rho}'}\right).
\]

\item Let $\Phi:D\to\mathbb{R}^{l}$ be a map whose graph is a sub-$\Lambda$-set
and let $\mathcal{F}$ be a finite family of vertical admissible transformations
$ $(of the same type)
\[
\rho:\hat{I}_{r_{\rho}}\to\hat{I}_{r}.
\]
 We say that $\mathcal{F}$ \emph{satisfies the covering property
with respect to} $\left(\Phi,S\right)$ if for every choice of polyradii
$r_{\rho}'\leq r_{\rho}\ \left(\rho\in\mathcal{F}\right)\ $ there
exists a polyradius $r^{*}\leq r$ such that
\[
\hat{I}_{r^{*}}\cap\Gamma\left(\Phi\restriction D\setminus S\right)\cap\rho\left(\hat{I}_{r_{\rho}'}\right)\not=\emptyset\text{\ and\ }\hat{I}_{r^{*}}\cap\Gamma\left(\Phi\restriction D\setminus S\right)\subseteq\bigcup_{\rho\in\mathcal{F}}\rho\left(\hat{I}_{r_{\rho}'}\right).
\]
 
\end{enumerate}
\end{defn}
The following remarks will be used several times throughout the rest
of the paper.
\begin{rems}
\label{rem: evolution of phi}$\ $Let $D$ and $\Phi$ be as above. 
\begin{enumerate}
\item Let $\mathcal{F}$ be a finite family of admissible transformations
which satisfies the covering property with respect to $D$. Suppose
that for every $\rho\in\mathcal{F}$ there is a finite family $\tilde{\mathcal{F}}_{\rho}$
of admissible transformations such that $\tilde{\mathcal{F}}_{\rho}$
satisfies the covering property with respect to $\rho^{-1}\left(D\right)$.
Then $\mathcal{G}$ satisfies the covering property with respect to
$D$, where $\mathcal{G}=\left\{ \rho\circ\tilde{\rho}:\ \rho\in\mathcal{F},\ \tilde{\rho}\in\tilde{\mathcal{F}}_{\rho}\right\} $.
\item Let $\mathcal{F}$ be a finite family of admissible transformations
which satisfies the covering property with respect to $D$. In the
notation of Definition \ref{def: rho respects f}, let $\pi$ be the
projection $\left\langle x,u\right\rangle \mapsto x$ and $\hat{D}\subseteq\mathbb{R}^{N+l}$
be a sub-$\Lambda$-set such that $\pi\left(\hat{D}\right)=D$. Then
$ $\foreignlanguage{english}{$\hat{\mathcal{F}}$} satisfies the
covering property with respect to $\hat{D}$, where $\hat{\mathcal{F}}$
is the family of all admissible transformations $\hat{\rho}$ obtained
by extending trivially $\rho\in\mathcal{F}$.
\item Let $\mathcal{F}$ be a finite family of admissible transformations
and suppose that $\mathcal{F}$ satisfies the covering property with
respect to $D$. Then $ $\foreignlanguage{english}{$\hat{\mathcal{F}}$}
satisfies the covering property with respect to $\left(\Phi,\emptyset\right)$,
where $\hat{\mathcal{F}}$ is a finite family of vertical admissible
transformations (of the same type) obtained by extending trivially
each $\rho\in\mathcal{F}$ to $\hat{\rho}:\hat{I}_{r_{\rho}}\to\hat{I}_{r}$,
where $\hat{I}_{r_{\rho}},\ \hat{I}_{r}$ are of either of the two
types. Conversely, if $\mathcal{F}$ is a family of vertical admissible
transformations which satisfies the covering property with respect
to $\left(\Phi,\emptyset\right)$, then the family $\mathcal{F}'=\left\{ \rho':\ \rho\in\mathcal{F}\right\} $
of admissible transformations satisfies the covering property with
respect to $D$.
\item Let $\mathcal{F}$ be a finite family of vertical admissible transformations
(of the same type). Suppose that $S\supseteq\bigcup_{\rho\in\mathcal{F}}\rho\left(S_{\rho}\right)$
and that $\mathcal{F}$ satisfies the covering property with respect
to $\left(\Phi,S\right)$. Suppose furthermore that for every $\rho\in\mathcal{F}$
there are a finite family $\tilde{\mathcal{F}}_{\rho}$ of vertical
admissible transformations (of the same type) and a sub-$\Lambda$-set
$S_{\rho}'\supseteq\bigcup_{\tilde{\rho}\in\tilde{\mathcal{F}}_{\rho}}\tilde{\rho}\left(S_{\tilde{\rho}}\right)$
of dimension strictly smaller than $N$ such that $\tilde{\mathcal{F}}_{\rho}$
satisfies the covering property with respect to $\left(\Phi_{\rho},S_{\rho}'\right)$.
Then $\mathcal{G}$ satisfies the covering property with respect to
$\left(\Phi,S'\right)$, where $\mathcal{G}=\left\{ \rho\circ\tilde{\rho}:\ \rho\in\mathcal{F},\ \tilde{\rho}\in\tilde{\mathcal{F}}_{\rho}\right\} $
and $S'=S\cup\bigcup_{\rho\in\mathcal{F}}\rho\left(S_{\rho}'\right)$.
\end{enumerate}
\end{rems}
The above definitions and remarks allow us to revisit the statement
of Theorem \ref{thm: geom monomial}.
\begin{rem}
\label{rem: monomialise respecting f}Let $f\in\mathcal{A}_{\hat{m}+k_{1},\hat{n}+k_{2}}$
and $g\in\mathcal{A}_{\check{m},\check{n}}$. Define $m:=\text{max}\left\{ \hat{m},\check{m}\right\} $
and $n:=N-m$. Then we can apply Theorem \ref{thm: geom monomial}
to $g$, seen as an element of $\mathcal{A}_{m,n}$ and obtain that
the admissible transformations in the statement respect $f$. Moreover,
using Remark \ref{rem: elemntary transf send quadrants to sectors},
the conclusion of Theorem \ref{thm: geom monomial} can be strengthened
by saying that $\mathcal{F}$ satisfies the covering property with
respect to $\hat{I}_{m,n,r'}$.
\end{rem}
The purpose of statements \textbf{(B)} and \textbf{(C) }in the next
theorem is to solve a given system of equations with respect to the
vertical variables. Statement \textbf{(A)} implies directly Theorem
\ref{thm: monomialis of def functions}. 
\begin{notation}
Let $\Phi=\left(\varphi_{1},\ldots,\varphi_{l}\right):D\to\mathbb{R}^{l}$
be a map whose graph is a sub-$\Lambda$-set. We denote by $j\left(\Phi\right)$
the cardinality of the set $\left\{ i:\ 1\leq i\leq l,\ \varphi_{i}\text{\ is\ not\ identically\ }0\right\} $.
Up to a permutation, we may always assume that the first $j\left(\Phi\right)$
coordinates of $\Phi$ are not identically zero. We set $\hat{\Phi}=\emptyset$
if $j\left(\Phi\right)=0$ and $\hat{\Phi}=\left(\varphi_{1},\ldots,\varphi_{j\left(\Phi\right)}\right)$
if $j\left(\Phi\right)>0$. For $F\left(x,u\right)\in\mathcal{A}_{\check{m},\check{n}+l}$,
we let $F_{0}\left(x,u_{1},\ldots,u_{j\left(\Phi\right)}\right):=F\left(x,u_{1},\ldots,u_{j\left(\Phi\right)},0\right)\in\mathcal{A}_{\check{m},\check{n}+j\left(\Phi\right)}$.\end{notation}
\begin{thm}
\label{thm: ABC}Let $N,l,\hat{m},\hat{n},k_{1},k_{2}\in\mathbb{N}$
with $l\leq N=\hat{m}+\hat{n}$ and let $f\in\mathcal{A}_{\hat{m}+k_{1},\hat{n}+k_{2}}$.
$ $Let $D\subseteq[-1,1]{}^{N}$ be a sub-$\Lambda$-set and suppose
that for every sufficiently small polyradius $r'$ in $\mathbb{R}^{N}$,
the intersection $\hat{I}_{\hat{m},\hat{n},r'}\cap D$ is not empty.

\noindent \textbf{(A)$_{N}$} Let $\eta:D\to\mathbb{R}$ be a function
whose graph is a sub-$\Lambda$-set. Then there exists a finite family
$\mathcal{F}$ of admissible transformations
\[
\rho:\hat{I}_{m_{\rho},n_{\rho},r_{\rho}}\to\hat{I}_{m,n,r}
\]
such that \emph{$\mathcal{F}$ }satisfies the covering property with
respect to\emph{ $D$ }and for every $\rho\in\mathcal{F}$,
\begin{itemize}
\item $m=\hat{m}$, hence $\rho$ respects $f$;
\item $\eta\circ\rho\in\mathcal{A}_{m_{\rho},n_{\rho},r_{\rho}}$ and is
normal.
\end{itemize}
\noindent \textbf{(B)$_{N,l}$} $ $Let $\check{m},\check{n}\in\mathbb{N}$
with $\check{m}+\check{n}=N$ and let $F\left(x,u\right)\in\mathcal{A}_{\check{m},\check{n}+l}$.
Let $\text{dim}\left(D\right)=N$ and $\Phi:D\to\mathbb{R}^{l}$ be
a map whose graph is a sub-$\Lambda$-set.

Then there exist a finite family $\mathcal{F}$ of vertical admissible
transformations 
\[
\rho:\hat{I}_{m_{\rho},n_{\rho}+j\left(\Phi\right),r_{\rho}}\to\hat{I}_{m,n+j\left(\Phi\right),r}
\]
 and $ $ a sub-$\Lambda$-set $S\subseteq D$ of dimension strictly
smaller than $N$ such that $\mathcal{F}$ satisfies the covering
property with respect to $\left(\hat{\Phi},S\right)$ and for every
$\rho\in\mathcal{F}$,
\begin{itemize}
\item $m=\text{max}\left\{ \hat{m},\check{m}\right\} $, hence $\rho'$
respects $f$ and $\rho$ respects $F$;
\item either $j\left(\hat{\Phi}_{\rho}\right)<j\left(\Phi\right)$ or $F_{0}\circ\rho$
is normal. 
\end{itemize}
\textbf{(C)$_{N,l}$} $ $Let $\check{m},\check{n}\in\mathbb{N}$
with $\check{m}+\check{n}=N$ and let $f_{1}\left(x,u\right),\ldots,f_{N}\left(x,u\right)\in\mathcal{A}_{\check{m},\check{n}+N}$.
Let $\text{dim}\left(D\right)=N$ and $\Phi:D\to\mathbb{R}^{N}$ be
a map whose graph is a sub-$\Lambda$-set.

Suppose that $j\left(\Phi\right)=l$ and that
\[
\forall\left(x,u\right)\in D\times\mathbb{R}^{N}\ \ \ \ \ \begin{cases}
f_{1}\left(x,u\right)=0\\
\vdots\\
f_{N}\left(x,u\right)=0
\end{cases}\Leftrightarrow\ \ u=\Phi\left(x\right).
\]
Then there exist a finite family $\mathcal{F}$ of vertical admissible
transformations 
\[
\rho:\hat{I}_{m_{\rho},n_{\rho}+N,r_{\rho}}\to\hat{I}_{m,n+N,r}
\]
 and $ $ a sub-$\Lambda$-set $S\subseteq D$ of dimension strictly
smaller than $N$ such that $\mathcal{F}$ satisfies the covering
property with respect to $\left(\Phi,S\right)$ and for every $\rho\in\mathcal{F}$,
\begin{itemize}
\item $m=\text{max}\left\{ \hat{m},\check{m}\right\} $, hence $\rho'$
respects $f$ and $\rho$ respects $f_{1},\ldots,f_{N}$;
\item $\forall\left(x,u\right)\in D_{\rho}\setminus\left(\rho'\right)^{-1}\left(S\right)\times\mathbb{R}^{N}\ \ \ \ \ \begin{cases}
f_{1}\circ\rho\left(x,u\right)=0\\
\vdots\\
f_{N}\circ\rho\left(x,u\right)=0
\end{cases}\Leftrightarrow\ \ u=0$. 
\end{itemize}
\end{thm}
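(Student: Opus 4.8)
The plan is to prove the three statements \textbf{(A)$_N$}, \textbf{(B)$_{N,l}$} and \textbf{(C)$_{N,l}$} simultaneously, by a nested induction: the outer induction is on $N$, and for fixed $N$ an inner induction runs on $l$ (for \textbf{(B)} and \textbf{(C)}) together with an induction on the ``weak monomialisation height'' of the map $\Phi$. The logical skeleton should be: \textbf{(B)$_{N,0}$} and \textbf{(C)$_{N,0}$} are essentially vacuous (or reduce to Theorem \ref{thm: geom monomial} via Remark \ref{rem: monomialise respecting f}); \textbf{(C)$_{N,l}$} is deduced from \textbf{(B)$_{N,l}$} by solving the system one vertical coordinate at a time, using that the solution set is exactly the graph of $\Phi$; and finally \textbf{(A)$_N$} is obtained from \textbf{(C)$_{N,l}$} applied to a system of equations defining the graph of $\eta$ (which exists because $\Gamma(\eta)$ is a sub-$\Lambda$-set, hence by the Parametrisation Theorem \ref{thm: param subanal} it is a projection of a trivial manifold, and a trivial manifold is cut out by finitely many functions in $\mathcal{A}$).

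For \textbf{(B)$_{N,l}$}, the core new ingredient is Lemma \ref{lem: weak monomialisation} (``weak monomialisation''), which, using the o-minimal Preparation Theorem of \cite{vdd:speiss:preparation_theorems}, produces, piecewise on $D$, a ``principal part'' for the coordinate $\varphi_1$ of $\Phi$: after a suitable (vertical) admissible transformation of the horizontal variables we may write $\varphi_1 \circ \rho' (x') = (x')^{\beta}\, \theta(x', \varphi_2\circ\rho', \ldots)$ where $\theta$ is a unit or, more precisely, where the germ is brought into a form to which a Tschirnhausen-type substitution in the vertical variable $u_1$ applies. Concretely, one first uses \textbf{(C)$_{N,l-1}$} (or the inductive form of \textbf{(B)}) to normalise the lower-order data $\hat\Phi = (\varphi_1, \ldots, \varphi_{l-1})$ or to reduce $j(\hat\Phi_\rho)$; then, having a monomial expression for the leading behaviour of $\varphi_1$, one performs the vertical Tschirnhausen translation $u_1 \mapsto u_1 + h$ with $h \in \mathcal{A}$, so that $F_0 \circ \rho$ becomes regular in $u_1$; finally one runs the \emph{vertical} version of the monomialisation process of Subsection \ref{sub:Monomialisation-of-generalised}, restricted to the class of vertical admissible transformations of Definition \ref{def:vertical transformations}, to make $F_0\circ\rho$ normal. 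At each step one must check that the transformations used are genuinely vertical (the horizontal components do not depend on $u$, and the vertical components are bijective in $u$ for fixed generic $x'$) — this is exactly where the blow-ups $\pi_{i,j}$ with $i$ or $j$ a vertical index, and the ``bad'' linear transformations, are forbidden, and one must instead use the split-off ramifications $\sigma_{m+i}^\pm$ from Proviso \ref{Proviso: admissible transf} and translations $\tau_h$ with $h$ depending only on the horizontal variables and the already-solved vertical variables.

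The bookkeeping of the covering property is handled by Remarks \ref{rem: evolution of phi}: the four items there are tailored precisely so that composing families of (vertical) admissible transformations, trivially extending them, and passing between $D$, $\hat{D}$, $(\Phi, S)$ and $(\Phi_\rho, S')$ all preserve the covering property, with the exceptional sets $S$ accumulating as finite unions $\rho(S_\rho)$ of lower-dimensional sub-$\Lambda$-sets. One must verify, branch by branch, that each transformation produced respects $f$ (i.e.\ $m = \max\{\hat m, \check m\}$, which is arranged by applying Theorem \ref{thm: geom monomial} to the relevant germ seen inside $\mathcal{A}_{m,n}$ with the larger number of horizontal variables, as in Remark \ref{rem: monomialise respecting f}) and respects $F$ (via its trivial extension $\hat\rho$). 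The deduction of \textbf{(C)} from \textbf{(B)} proceeds by induction on $l$: \textbf{(B)$_{N,l}$} either drops $j(\hat\Phi)$ — in which case after finitely many steps we reach $j = 0$ and the system trivially has solution $u = 0$ — or it normalises $F_{1,0}\circ\rho$ (where $F_1$ is the first equation), which by monomial division lets us factor out $u_1$-type monomials and solve for $u_1$; substituting the solution for $u_1$ reduces to an $(l-1)$-variable system, to which \textbf{(C)$_{N,l-1}$} applies after a further vertical transformation. Quasianalyticity (Definition \ref{def: quasi-analyticity}) is what guarantees that the formal identities coming from the series side (Theorem \ref{thm: monomialisation}, the implicit-function and monomial-division axioms of \ref{emp:properties of the morph}) lift to genuine identities of germs of functions on small boxes.

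The hardest part, and the genuinely original contribution, is running the monomialisation algorithm of Subsection \ref{sub:Monomialisation-of-generalised} \emph{only} with vertical admissible transformations: in the original algorithm one freely uses blow-up charts $\pi_{m+n,1}$ mixing the last vertical variable $Y_n$ with a horizontal variable $X_1$ (Cases 1--3 in the proof of Theorem \ref{thm: monomialisation}), and these are exactly the transformations excluded from the vertical class. The main obstacle is therefore to redesign the inductive step so that the order of $F_0\circ\rho$ in the distinguished vertical variable is decreased using only: (i) ramifications $r_i^\gamma$ and split-ramifications $\sigma_{m+i}^{\pm}$, (ii) Tschirnhausen translations $\tau_h$ in which $h$ involves only horizontal variables (to preserve verticality of the first components) and already-processed vertical variables, and (iii) blow-ups among horizontal variables only — together with the leading-term information supplied by the o-minimal Preparation Theorem via Lemma \ref{lem: weak monomialisation}, which substitutes for the lost ``mixed'' blow-ups by telling us in advance the monomial $x^\beta$ controlling the principal part. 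Making this reduction terminate (a well-founded decrease of a pair built from $j(\Phi)$, the order in the vertical variable, and the monomialisation height of the horizontal data) is the crux; everything else is careful propagation of the covering property and of the ``respects $f$'' condition through the induction.
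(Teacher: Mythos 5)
Your overall skeleton matches the paper: a simultaneous induction on the pairs $(N,l)$ in lexicographic order, with \textbf{(A)}$_{N-1}$ and \textbf{(B)}$_{N,l'}$ $(l'<l)$ feeding into \textbf{(B)}$_{N,l}$, \textbf{(B)}$_{N,N}$ and \textbf{(C)}$_{N,l'}$ $(l'<l)$ feeding into \textbf{(C)}$_{N,l}$, and \textbf{(A)}$_{N-1}$ with \textbf{(C)}$_{N,N}$ feeding into \textbf{(A)}$_N$. (A small imprecision: you say \textbf{(C)}$_{N,l}$ is deduced from \textbf{(B)}$_{N,l}$, but since $\Phi$ in \textbf{(C)}$_{N,l}$ maps into $\mathbb{R}^N$ and the product $F=\prod f_i$ has $N$ vertical variables, the paper applies \textbf{(B)}$_{N,N}$; consequently the order of proof is all of \textbf{(B)}$_{N,0},\ldots,\textbf{(B)}_{N,N}$ first, then all of \textbf{(C)}$_{N,0},\ldots,\textbf{(C)}_{N,N}$, then \textbf{(A)}$_N$.) You also correctly identify the role of Lemma \ref{lem: weak monomialisation}, the order-dropping substitutions $v\mapsto x^\beta(\lambda+v)$ over a finite covering family of $\lambda$'s, and the bookkeeping of the covering property via Remarks \ref{rem: evolution of phi}.

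However, there is a genuine gap at the very step you identify as the crux. You assert that after weak monomialisation of the $\varphi_i$, ``one performs the vertical Tschirnhausen translation $u_1\mapsto u_1+h$ $\ldots$ so that $F_0\circ\rho$ becomes regular in $u_1$.'' A Tschirnhausen translation does not produce regularity; it only removes the next-to-leading coefficient once some finite order of regularity in the distinguished vertical variable is already established. In the ordinary monomialisation algorithm (Theorem \ref{thm: monomialisation}) regularity in $Y_n$ is obtained by a linear transformation $L_{n,c}$ mixing $Y_n$ with the other bounded variables, and precisely this transformation is \emph{not} vertical. The paper's essential new tool here is Lemma \ref{lem:noether}: an ascending-chain (Noetherianity) statement for generalised power series, proved via the formal Weierstrass division \ref{emp:weierstrass}, which shows that after a vertical admissible tree the ideal generated by the coefficients $f_i(x,\hat u)=\frac{1}{i!}\partial_v^i F_0(x,\hat u,0)$ is generated by finitely many of them; one can then apply the inductive hypothesis \textbf{(B)}$_{N,j(\Phi)-1}$ to a finite product of these coefficients, make them normal and linearly ordered by division, and only then conclude that $F_0\circ\rho$ is, after factoring a monomial, regular of some finite order in $v$. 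Without this lemma (or an equivalent finiteness mechanism), your ``vertical version of the monomialisation process'' has no way to initiate the descent on the order in $v$, and the argument does not go through. The Noetherianity lemma is the genuinely original device that replaces the forbidden mixed linear transformation; it should be stated and proved, and the reduction to regularity in $v$ should be restructured around it.
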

Before proving Theorem \ref{thm: ABC}, we show how it implies the
Rectilinearisation Theorem. We prove the following stronger statement.
\begin{prop}
\label{prop: strong rectilin}Let $m,n,k_{1},k_{2}\in\mathbb{N}$,
with $N=m+n$, and $f\in\mathcal{A}_{m+k_{1},n+k_{2}}$. Let $D\subseteq\mathbb{R}^{N}$
be a sub-$\Lambda$-set. Then there exist a neighbourhood $W$ of
0 in $\mathbb{R}^{N}$ and a finite family $\mathcal{F}=\left\{ \left(\rho_{i},Q_{i}\right):\ i=1,\ldots,M\right\} $,
where $\rho_{i}:\hat{I}_{m_{i},n_{i},r{}_{i}}\to\hat{I}_{m,n,r}$
is an admissible transformation (which respects $f$) and $Q_{i}\subseteq\overline{Q_{i}}\subseteq\hat{I}_{m_{i},n_{i},r_{i}}$
is a sub-quadrant, such that $\rho_{i}\restriction Q_{i}:\ Q_{i}\to\rho_{i}\left(Q_{i}\right)$
is a diffeomorphism and 
\[
W\cap\hat{I}_{m,n,r}\cap D=\bigcup_{i=1}^{M}\rho_{i}\left(Q_{i}\right).
\]
\end{prop}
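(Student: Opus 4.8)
The plan is to deduce Proposition \ref{prop: strong rectilin} from part \textbf{(A)} of Theorem \ref{thm: ABC} together with the basic-set version of monomialisation already at hand, namely Proposition \ref{prop: param basic sets}. The key observation is that a sub-$\Lambda$-set $D$, intersected with $\hat{I}_{m,n,r}$, is locally at $0$ the projection of an $\mathcal{A}$-semianalytic set; using the Parametrisation Theorem \ref{thm: param subanal} (equivalently, the trivial-manifold decomposition of Corollary \ref{cor: decomp of s.a. into trivial manifolds} via Proposition \ref{prop: trivial manifolds for subanal sets}), I can write $\hat I_{m,n,r}\cap D$, near $0$, as a finite union of images $H_j(Q_j)$ of sub-quadrants under maps $H_j$ with components in $\mathcal A$ which are diffeomorphisms onto their images. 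Since the graph of each $H_j$ is a sub-$\Lambda$-set and these maps are $\mathbb{R}_{\mathcal A}$-definable, the set $D$ itself is $\mathbb R_{\mathcal A}$-definable with sub-$\Lambda$ graph data, so Theorem \ref{thm: ABC}\textbf{(A)} applies to the (piecewise) coordinate functions describing $D$.

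More concretely, I would proceed as follows. First, reduce to the case where $D$ is $\mathcal{A}_{m,n}$-basic: by Remark \ref{rem: bounded sets are sub-lambda} and the Theorem of the Complement for $\Lambda$ (cited after Definition \ref{def: sub-lambda-set}), a sub-$\Lambda$-set is $\mathbb R_{\mathcal A}$-definable, and near $0$ a bounded $\mathbb R_{\mathcal A}$-definable set is, after the change of charts $\tau^n_\iota$ as in \ref{vuoto: model comp and o-min}, a finite boolean combination of $\mathcal{A}$-basic sets; a standard cell-decomposition/induction argument lets me assume $D$ is itself $\mathcal A$-basic (the boolean operations are absorbed by taking finitely many pieces). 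Second, apply Proposition \ref{prop: param basic sets} directly to the $\mathcal{A}$-basic set $D\cap\hat I_{m,n,r}$: this already yields a neighbourhood $W$ of $0$ and a finite family $\{(\rho_i,Q_i)\}$ of admissible transformations with $\rho_i\restriction Q_i$ a diffeomorphism onto its image and $W\cap D=\bigcup_i\rho_i(Q_i)$. Third, upgrade the transformations so that they \emph{respect} $f\in\mathcal A_{m+k_1,n+k_2}$: using Remark \ref{rem: monomialise respecting f}, I can run the monomialisation underlying Proposition \ref{prop: param basic sets} while keeping $m$ fixed equal to $\max\{m,\check m\}$ (here only $m$ matters, since $f$ has $m+k_1$ horizontal variables), so each $\rho_i$ respects $f$; the covering property in the sense of Definition \ref{def: property *}(1) is exactly what Remark \ref{rem: monomialise respecting f} guarantees, and it gives the equality $W\cap\hat I_{m,n,r}\cap D=\bigcup_i\rho_i(Q_i)$ after shrinking $W$.

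I expect the main obstacle to be the bookkeeping of the ``respecting $f$'' condition through the inductive construction of the admissible tree: the elementary transformations that monomialise the defining functions of $D$ may a priori act on variables that would be horizontal for $D$ but are needed as extra horizontal slots for $f$, and one must check that trivially extending each $\rho_i$ (Definition \ref{def: rho respects f}) does not break the diffeomorphism-onto-image property on $Q_i$ nor the covering property. This is handled by the trivial-extension construction of Definition \ref{def: rho respects f} together with Remark \ref{rem: evolution of phi}(2)–(3): trivial extensions preserve being admissible, preserve the covering property, and act as the identity on the added variables, hence preserve being a diffeomorphism on the (extended) sub-quadrant. The remaining point — that $Q_i\subseteq\overline{Q_i}\subseteq\hat I_{m_i,n_i,r_i}$, i.e. the closure of the sub-quadrant stays inside the open polybox — is already built into the conclusion of Proposition \ref{prop: param basic sets} and is preserved by shrinking $W$ and the polyradii. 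Putting these together gives the stated family $\mathcal F=\{(\rho_i,Q_i)\}$, and the Rectilinearisation Theorem \ref{cor: rectilinearisation} then follows by taking $k_1=k_2=0$ and $f$ any coordinate function.
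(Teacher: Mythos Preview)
Your proposal has a genuine gap in its first reduction step. You claim that ``near $0$ a bounded $\mathbb R_{\mathcal A}$-definable set is, after the change of charts $\tau^n_\iota$, a finite boolean combination of $\mathcal A$-basic sets,'' and that ``a standard cell-decomposition/induction argument'' then reduces to $D$ being $\mathcal A$-basic. This is not correct: what \ref{vuoto: model comp and o-min} and Remark~\ref{rem: bounded sets are sub-lambda} establish is only that such a set is a \emph{sub-$\Lambda$-set}, i.e.\ a \emph{projection} of an $\mathcal A$-semianalytic set. Showing that a sub-$\Lambda$-set is locally a boolean combination of $\mathcal A$-basic sets is essentially quantifier elimination (Theorem~B), which is downstream of the proposition you are proving. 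Cell decomposition by itself does not help: it produces cells bounded by \emph{definable} functions $\varphi_i$, with no reason for these to lie in $\mathcal A$.

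The paper's proof closes exactly this gap, and this is where Theorem~\ref{thm: ABC}\textbf{(A)} is actually used rather than merely mentioned. One argues by induction on $(N,\dim D)$; cell decomposition reduces $D$ to a cell over a base $C\subseteq\mathbb R^c$ with $c\le N-1$, described by conditions $y>\varphi_0(x)$ or $y_i=\varphi_i(x)$ with the $\varphi_i$ merely definable. Then \textbf{(A)}$_c$ is applied to each $\varphi_i$ to obtain admissible transformations $\rho$ (respecting $f$) after which $\varphi_i\circ\rho\in\mathcal A$; the exceptional locus $S_\rho$ where $\rho$ fails to be a diffeomorphism has smaller dimension and is handled by the induction on $\dim D$. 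A second appeal to the inductive hypothesis on $N$ rectilinearises $C$ into sub-quadrants. Only at that point does $D$ become $\mathcal A$-basic, and Proposition~\ref{prop: param basic sets} (with Remark~\ref{rem: monomialise respecting f}) finishes the argument. Your outline needs this two-stage use of \textbf{(A)} and the $(N,d)$-induction in place of the unjustified ``boolean combination'' shortcut.
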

\begin{proof}
Notice that, by Proposition \ref{prop: param basic sets} and Remark
\ref{rem: monomialise respecting f}, the proposition has already
been proved whenever $D$ is an $\mathcal{A}$-basic set. We aim to
show that we can reduce to this situation.

The proof is by induction on the pairs $\left(N,d\right)$, where
$d=\text{dim}\left(D\right)$, ordered lexicographically. The basic
cases $\left(0,0\right)$ and $\left(N,0\right)$ are straightforward.
By a cell decomposition argument, we may assume that $D$ is a cell
and, without loss of generality, that 
\[
D=\left\{ \left(x,y\right):\ x\in C,\ \theta\left(x,y\right)\right\} ,
\]
where, either
\begin{itemize}
\item (Case 1) $d=N,\ x=\left(x_{1},\ldots,x_{N-1}\right),\ y=x_{N}$, $C\subseteq\mathbb{R}^{N-1}$
is a sub-$\Lambda$-set and a cell of dimension $N-1$ and $\theta\left(x,y\right)$
is \foreignlanguage{english}{$y>\varphi_{0}\left(x\right)$,} where
the graph of the function $\varphi_{0}:C\to\mathbb{R}$ is a sub-$\Lambda$-set,
or
\item (Case 2) $d<N,\ x=\left(x_{1},\ldots,x_{d}\right),\ y=\left(y_{1},\ldots,y_{N-d}\right)=\left(x_{d+1},\ldots,x_{N}\right)$,
$C\subseteq\mathbb{R}^{d}$ is a sub-$\Lambda$-set and a cell of
dimension $d$ and $\theta\left(x,y\right)$ is $\bigwedge_{i=1}^{N-d}y_{i}=\varphi_{i}\left(x\right)$,
where the graphs of the functions $\varphi_{i}:C\to\mathbb{R}$ are
sub-$\Lambda$-sets.
\end{itemize}
We will treat the two cases simultaneously. Notice that in both cases
$c:=\text{dim}\left(C\right)\leq N-1$. 

We first show that we can assume that $\varphi_{i}\in\mathcal{A}$.
In fact, by the statement \textbf{(A)}$_{c}$, there exists a finite
family $\mathcal{F}$ of admissible transformations $\rho:\hat{I}_{l_{1}',l_{2}',r'}\to\hat{I}_{l_{1},l_{2},r}$,
where $l_{1}+l_{2}=l_{1}'+l_{2}'=c$, such that 
\[
\hat{I}_{l_{1},l_{2},r}\cap C\subseteq\bigcup_{\rho\in\mathcal{F}}\rho\left(\hat{I}_{l_{1}',l_{2}',r'}\right)
\]
and for all $\rho\in\mathcal{F}$ we have that $\rho$ respects $f$
and $\varphi_{i}\circ\rho\in\mathcal{A}_{l_{1}',l_{2}'}$. Arguing
by induction on the length of $\rho$ it is easy to see that there
exists a closed sub-$\Lambda$-set $S_{\rho}\subseteq\mathbb{R}^{c}$
of dimension $<c$ such that $\rho\restriction\hat{I}_{l_{1}',l_{2}',r'}\setminus S_{\rho}$
is a diffeomorphism onto its image. Let $B_{\rho}=\rho^{-1}\left(\hat{I}_{l_{1},l_{2},r}\cap C\right),\ B_{\rho}'=B_{\rho}\setminus S_{\rho}$
and $B_{\rho}''=B_{\rho}\cap S_{\rho}$. Let $\hat{\rho}:\hat{I}_{m',n',\hat{r'}}\to\hat{I}_{m,n,\hat{r}}$
be the trivial extension of $\rho$ and $\hat{\mathcal{F}}=\left\{ \hat{\rho}:\ \rho\in\mathcal{F}\right\} $.
Let 
\begin{align*}
D_{\rho}'= & \left\{ \left(x,y\right):\ x\in B_{\rho}',\ \theta_{\rho}\left(x,y\right)\right\} ,\\
D_{\rho}''= & \left\{ \left(x,y\right):\ x\in B_{\rho}'',\ \theta_{\rho}\left(x,y\right)\right\} ,
\end{align*}
where $\theta_{\rho}\left(x,y\right)$ is $y>\varphi_{0}\circ\rho\left(x\right)$
in case 1 and $\bigwedge_{i=1}^{N-d}y_{i}=\varphi_{i}\circ\rho\left(x\right)$
in case 2. Notice that there exists a neighbourhood $W\subseteq\mathbb{R}^{N}$
of $0$ such that
\[
\bigcup_{\hat{\rho}\in\hat{\mathcal{F}}}\hat{\rho}\left(D_{\rho}'\right)\cup\hat{\rho}\left(D_{\rho}''\right)=W\cap\hat{I}_{m,n,\hat{r}}\cap D.
\]
In both cases $\text{dim}\left(\hat{\rho}\left(D_{\rho}''\right)\right)\leq\text{dim}\left(D_{\rho}''\right)<d$,
so by the inductive hypothesis on the dimension $d$ of $D$, and
by the fact that $\hat{\rho}\restriction D_{\rho}'$ is a diffeomorphism
onto its image, we have reduced to the situation where the $\varphi_{i}$
are in $\mathcal{A}$.

Next we show that we can reduce to the case when $C$ is a sub-quadrant
(and hence $D$ is an $\mathcal{A}$-basic set). In order to see this,
since $c<N$, by the inductive hypothesis on the dimension $N$ of
the ambient space, we can apply the proposition to $C$. Hence there
are a neighbourhood $\tilde{W}\subseteq\mathbb{R}^{c}$ of $0$ and
a finite family $\mathcal{F}$ of admissible transformations $\rho:\hat{I}_{l_{1}',l_{2}',r'}\to\hat{I}_{l_{1},l_{2},r}$
(respecting $f,\varphi_{0},\ldots,\varphi_{N-l}$), where $l_{1}+l_{2}=l_{1}'+l_{2}'=c$,
and sub-quadrants $Q\subseteq\overline{Q}\subseteq\hat{I}_{l_{1}',l_{2}'.r'}$
such that $\rho\restriction Q$ is a diffeomorphism onto its image
and $\tilde{W}\cap\hat{I}_{l_{1},l_{2},r}\cap C={\displaystyle \bigcup_{\left(\rho,Q\right)\in\mathcal{F}}\rho\left(Q\right)}$.
Let $\hat{\rho}:\hat{I}_{m',n',\hat{r'}}\to\hat{I}_{m,n,\hat{r}}$
be the trivial extension of $\rho$ and $\hat{\mathcal{F}}=\left\{ \hat{\rho}:\ \rho\in\mathcal{F}\right\} $.
Let 
\begin{align*}
D_{\rho}= & \left\{ \left(x,y\right):\ x\in Q,\ \theta_{\rho}\left(x,y\right)\right\} ,
\end{align*}
where $\theta_{\rho}\left(x,y\right)$ is $y>\varphi_{0}\circ\rho\left(x\right)$
in case 1 and $\bigwedge_{i=1}^{N-d}y_{i}=\varphi_{i}\circ\rho\left(x\right)$
in case 2. Notice that there exists a neighbourhood $W\subseteq\mathbb{R}^{N}$
of $0$ such that
\[
\bigcup_{\hat{\rho}\in\hat{\mathcal{F}}}\hat{\rho}\left(D_{\rho}\right)=W\cap\hat{I}_{m,n,\hat{r}}\cap D,
\]
and since $\hat{\rho}\restriction D_{\rho}$ is a diffeomorphism onto
its image, we have reduced to the situation where $D$ is an $\mathcal{A}$-basic
set, and we are done.
\end{proof}
\medskip{}

The rest of the section is devoted to the proof of Theorem \ref{thm: ABC},
which is by induction on the pairs $\left(N,l\right)$, ordered lexicographically.
\smallskip{}

If $N=0$, then statements \textbf{(A)}, \textbf{(B)} and \textbf{(C)}
are trivial.

For\textbf{ }any $N$, we have that \textbf{(B)}$_{N,0}$ follows
from Theorem \ref{thm: geom monomial} and Remark \ref{rem: monomialise respecting f}
and \textbf{(C)}$_{N,0}$ is trivially true. Hence let us assume that
$N,l\geq1$.
\begin{lem}
\label{lem: weak monomialisation}Let $N\geq1$ and suppose that \textbf{(A)}$_{N-1}$
holds. Let $l,\hat{m},\hat{n},k_{1},k_{2}\in\mathbb{N}$ with $l\leq N=\hat{m}+\hat{n}$
and let $f\in\mathcal{A}_{\hat{m}+k_{1},\hat{n}+k_{2}}$. Let $D\subseteq[-1,1]{}^{N}$
be a sub-$\Lambda$-set and suppose that for every sufficiently small
polyradius $r'$ in $\mathbb{R}^{N}$, the intersection $\hat{I}_{\hat{m},\hat{n},r'}\cap D$
is not empty. Let $\text{dim}\left(D\right)=N$ and $\Phi:D\to\mathbb{R}^{l}$
be a map whose graph is a sub-$\Lambda$-set such that $j\left(\Phi\right)=l$.
Then there are constants $K_{1},K_{2}\in\mathbb{R}^{>0}$ and a finite
family $\mathcal{F}$ of admissible transformations
\[
\rho:\hat{I}_{m_{\rho},n_{\rho},r_{\rho}}\to\hat{I}_{m,n,r}
\]
such that \emph{$\mathcal{F}$ }satisfies the covering property with
respect to\emph{ $D$ }and for every $\rho\in\mathcal{F}$,
\begin{itemize}
\item $m=\hat{m}$, hence $\rho$ respects $f$;
\item for every $i=1,\ldots,l$ there are exponents $\alpha_{i}\in\mathbb{A}^{n}$
and functions $U_{i}:D\to\mathbb{R}$ (whose graph is a sub-$\Lambda$-set)
with $K_{1}\leq|U_{1}|\leq K_{2}$ such that $\varphi_{i}\circ\rho\left(x\right)=x^{\alpha_{i}}U_{i}\left(x\right)$. 
\end{itemize}
\end{lem}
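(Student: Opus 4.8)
The plan is to reduce Lemma \ref{lem: weak monomialisation} to the o-minimal Preparation Theorem of van den Dries and Speissegger \cite{vdd:speiss:preparation_theorems} applied to the $\mathbb{R}_{\mathcal{A}}$-definable functions $\varphi_1,\ldots,\varphi_l$, and then use statement \textbf{(A)}$_{N-1}$ to monomialise the ``principal monomials'' produced by that theorem. First I would invoke the Preparation Theorem: since each $\varphi_i:D\to\mathbb{R}$ is definable in the polynomially bounded o-minimal structure $\mathbb{R}_{\mathcal{A}}$ (Theorem A), there is a finite definable partition of $D$ such that on each piece $\varphi_i(x)$ is, after a suitable choice of ``last variable'' $x_N$, of the form $b_i(\hat x)\,|x_N-\theta_i(\hat x)|^{\lambda_i}\,\mathrm{unit}$, where $\hat x=(x_1,\ldots,x_{N-1})$, the $b_i,\theta_i$ are definable in fewer variables, $\lambda_i$ lies in the field of exponents $\mathbb{K}$, and the unit is bounded away from $0$ and $\infty$; iterating in the remaining variables one arrives, on each piece, at an expression $\varphi_i(x)=c_i\,\prod_{j}|x_j-\eta_{i,j}|^{\lambda_{i,j}}\cdot V_i(x)$ with $V_i$ a bounded unit and the centers $\eta_{i,j}$ definable in fewer variables. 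I would first dispose of the centers by a finite sequence of translations/reflections and the covering argument (as in Subsection \ref{subsec:Proof-of-o-minimality}): restrict to a small box around $0$ and use that the $R_\iota$-decomposition plus the substitution $x\mapsto 1/x$ lets us assume all relevant centers are at $0$, so that $\varphi_i$ becomes (piecewise) a product of pure powers of the coordinates times a bounded unit. Then $x^{\alpha_i}$ with $\alpha_i\in\mathbb{K}^n$ is a monomial in the coordinates, and the unit $U_i$ is the definable function with bounded graph that we want.

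The remaining obstruction is that at this stage the exponents $\alpha_i$ lie in $\mathbb{K}$ (the field of fractions), not necessarily in the semiring $\mathbb{A}$ as required, and the centers $\eta_{i,j}$, though definable in $N-1$ variables, are not yet in $\mathcal{A}$; this is where \textbf{(A)}$_{N-1}$ enters. I would apply \textbf{(A)}$_{N-1}$ to the finitely many $(N-1)$-variable definable functions appearing (the $b_i$, $\theta_i$ and their iterates), obtaining admissible transformations $\sigma$ of $\mathbb{R}^{N-1}$ (respecting the ambient $f$) after which all these become elements of $\mathcal{A}$ and normal. Trivially extending each such $\sigma$ to $\mathbb{R}^N$ (leaving the last variable untouched) and using Remark \ref{rem: evolution of phi}(1) to compose the covering families, one reduces to the case where the centers are $\mathcal{A}$-functions; a Tschirnhausen translation $\tau_{\theta_i}$ then moves each center to $0$, and a ramification $r_i^{\gamma}$ with suitable $\gamma\in\mathbb{K}^{>0}$ clears denominators so that the resulting exponents lie in $\mathbb{A}$ (this is exactly the kind of maneuver used in the proof of Theorem \ref{thm: monomialisation}, cases involving $r_i^{1/\gamma}$). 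After these admissible transformations $\varphi_i\circ\rho$ has the form $x^{\alpha_i}U_i(x)$ with $\alpha_i\in\mathbb{A}^n$ and $U_i$ a bounded unit whose graph, being obtained from sub-$\Lambda$-sets by composition with admissible maps, is again a sub-$\Lambda$-set.

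For the boundedness constants $K_1,K_2$: the units produced by the Preparation Theorem on each piece of the (finite) partition are each bounded above and away from $0$ on their domain; taking the minimum of the lower bounds and the maximum of the upper bounds over the finitely many pieces and finitely many $\rho\in\mathcal{F}$ yields uniform $K_1,K_2$. I would organise the proof by induction structure matching the rest of Theorem \ref{thm: ABC}: the hypothesis is exactly \textbf{(A)}$_{N-1}$, which is available, and the output family $\mathcal{F}$ satisfies the covering property with respect to $D$ by the usual compactness argument (as in \cite[p.~4406]{vdd:speiss:gen}) combined with Remark \ref{rem: evolution of phi}. The only genuinely delicate point is bookkeeping: one must check at each composition step that the maps involved are still admissible transformations and that $m$ stays equal to $\hat m$ (so that $f$ continues to be respected), which forces one to use trivial extensions and to order the operations so that ramifications and Tschirnhausen translations act only on variables already ``freed'' by the inductive step. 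The main obstacle is thus not any single hard estimate but the careful interleaving of the Preparation Theorem's output with the admissible-tree formalism so that the field-of-exponents denominators are cleared and the centers are absorbed without enlarging the $x$-block.
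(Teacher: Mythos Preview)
Your overall strategy matches the paper's: invoke the o-minimal Preparation Theorem, then use \textbf{(A)}$_{N-1}$ to tame the lower-dimensional data it produces. But the execution diverges from the paper in ways that leave real gaps.

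First, the paper applies the Preparation Theorem \emph{once}, in the last variable $y=x_N$, obtaining a \emph{single} common center $a_0(\hat x)$ for all the $\varphi_i$ simultaneously (this is exactly what \cite[Thm.~2.1]{vdd:speiss:preparation_theorems} provides). Your proposal to iterate through all variables produces many centers $\eta_{i,j}$ with nested dependencies, and your suggestion to ``dispose of the centers'' via the $R_\iota$-decomposition and the substitution $x\mapsto 1/x$ is not right: that machinery from Subsection~\ref{subsec:Proof-of-o-minimality} handles unbounded \emph{variables}, not nonzero \emph{centers}. It does not move centers to $0$.

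Second, and more substantively, you propose to apply \textbf{(A)}$_{N-1}$ directly to the coefficients $b_i$ (the paper's $a_i$). But \textbf{(A)}$_{N-1}$ requires the graph of the function to be a sub-$\Lambda$-set, hence bounded; the $a_i$ produced by the Preparation Theorem need not be. The paper fixes this by passing to $\hat a_i := a_i$ or $1/a_i$ according to whether $a_i\le 1$ or $a_i>1$ on $\hat D$, and only then invokes \textbf{(A)}$_{N-1}$. You do not address this, and without it the inductive call is not licensed.

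Third, after \textbf{(A)}$_{N-1}$ the paper does \emph{not} perform a Tschirnhausen translation to kill the center. Instead it forms the $\mathcal{A}$-function $g_\rho(\hat x,y)=y-\hat a_0\circ\rho(\hat x)$ (or $y\cdot\hat a_0\circ\rho(\hat x)-1$ in the case $a_0>1$) and monomialises the product $g_\rho\cdot\prod_i \hat a_i\circ\rho$ via Theorem~\ref{thm: geom monomial}. This turns $(y-a_0)$ into an honest monomial $x^{\gamma_{l+1}}v_{l+1}$ in \emph{all} $N$ variables, after which one can raise to the exponent $\beta_i\in\mathbb{K}$ and then clear denominators by ramifications in the $x$-block. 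Your Tschirnhausen route would require $a_0\circ\rho(0)=0$ (Definition~\ref{Def: elementary transformations} demands $h(0)=0$), forcing a case split you do not carry out; and even when it works it leaves a factor $y^{\beta_i}$ with $\beta_i\in\mathbb{K}$, which cannot be cleared by the $y$-ramifications $r_{m+i}^{d,\pm}$ (these allow only $d\in\mathbb{N}$) without first converting $y$ to an $x$-variable via $\sigma^\pm$. The paper's use of Theorem~\ref{thm: geom monomial} sidesteps all of this in one stroke.

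In short: keep the Preparation Theorem and \textbf{(A)}$_{N-1}$ as the two pillars, but prepare only once with a common center, pass to $\hat a_i$ to ensure boundedness before invoking \textbf{(A)}$_{N-1}$, and then monomialise $y-a_0$ as a germ via Theorem~\ref{thm: geom monomial} rather than translating.
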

\begin{rem}
\label{rem: weakly normal}Notice that we do not require at this stage
that $U_{i}$ be an element of $\mathcal{A}$, hence we say that $\varphi_{i}\circ\rho$
is \emph{weakly normal}.\end{rem}
\begin{proof}
Let us first consider the case $N=1$. Notice that, by polynomial
boundedness, for all $x\in D\cap\mathbb{R}^{\geq0}$ we have $\varphi_{i}\left(x\right)=x^{\alpha_{i}/\beta_{i}}U_{i}\left(x\right)$,
for some $U_{i}$ as in the statement of the statement of the lemma
and $\alpha_{i},\beta_{i}\in\mathbb{A}$. Let $\beta=\beta_{1}\cdot\ldots\cdot\beta_{l}$.
If $\hat{m}=1$ then $\mathcal{F}=\left\{ \sigma_{1}^{+}\circ r_{1}^{\beta}\right\} $
satisfies the conclusion of the lemma. If $\hat{n}=1$ then $\mathcal{F}=\left\{ \sigma_{1}^{+}\circ r_{1}^{\beta},\sigma_{1}^{-}\circ r_{1}^{\beta}\right\} $
satisfies the conclusion of the lemma. 

Let $N>1$. Let $\hat{x}=\left(x_{1},\ldots,x_{N-1}\right),\ y=x_{N}$
and $\hat{D}$ be the projection of $D$ onto the coordinate space
spanned by $\hat{x}$. By cell decomposition and by \cite[Thm. 2.1]{vdd:speiss:preparation_theorems},
we may assume that there are $\beta_{1},\ldots,\beta_{l}\in\mathbb{K}$,
$\mathbb{R}_{\mathcal{A}}$-definable functions $a_{0},a_{1},\ldots,a_{l}:\hat{D}\to\mathbb{R}$
and $U_{1},\ldots,U_{l}:D\to\mathbb{R}$ such that $\forall x=\left(\hat{x},y\right)\in D\ y>a_{0}\left(\hat{x}\right)$
and $\forall i=1,\ldots,l\ \ $ 
\[
\forall x=\left(\hat{x},y\right)\in D\ \ \varphi_{i}\left(\hat{x},y\right)=\left(y-a_{0}\left(\hat{x}\right)\right)^{\beta_{i}}a_{i}\left(\hat{x}\right)U_{i}\left(\hat{x},y\right)\text{\ and\ }\frac{1}{2}<|U_{i}|<\frac{3}{2}.
\]

By a further cell decomposition we may assume that all the units $U_{i}$
are positive on $D$ and that for all $i=0,1,\ldots,l$ either $\forall\hat{x}\in\hat{D}\ a_{i}\left(\hat{x}\right)\leq1$
or $\forall\hat{x}\in\hat{D}\ a_{i}\left(\hat{x}\right)>1$. For $i=0,\ldots,l$
let 
\[
\hat{a}_{i}\left(\hat{x}\right):=\begin{cases}
a_{i}\left(\hat{x}\right) & \text{if\ }a_{i}\leq1\text{\ on }\hat{D}\\
1/a_{i}\left(\hat{x}\right) & \text{if\ }a_{i}>1\text{\ on }\hat{D}
\end{cases}
\]
and let $\hat{a}\left(\hat{x}\right)=\prod_{i=0}^{l}\hat{a}_{i}\left(\hat{x}\right)$.
By Remark \ref{rem: bounded sets are sub-lambda} the graph of $\hat{a}$
is a sub-$\Lambda$-set. Hence, by \textbf{(A)}$_{N-1}$, there is
a finite family $\mathcal{F}$ of admissible transformations such
that every $\rho\in\mathcal{F}$ extends trivially to $\hat{\rho}:\hat{I}_{m_{\rho},n_{\rho}.r_{\rho}}\to\hat{I}_{\hat{m},\hat{n},r}$
(hence $\hat{\rho}$ respects $f$), $\hat{\mathcal{F}}:\left\{ \hat{\rho}:\ \rho\in\mathcal{F}\right\} $
satisfies the covering property with respect to $D$ and for all $\hat{\rho}\in\hat{\mathcal{F}},$
for all $i=0,.\ldots l$ we have $\hat{a}_{i}\circ\hat{\rho}\in\mathcal{A}_{m_{\rho},n_{\rho},r_{\rho}}$.
Let
\[
g_{\rho}\left(\hat{x},y\right)=\begin{cases}
y-\hat{a}_{0}\circ\rho\left(\hat{x}\right) & \text{if\ }a_{0}\leq1\text{\ on }\hat{D}\\
y\cdot\hat{a}_{0}\circ\rho\left(\hat{x}\right)-1 & \text{if\ }a_{0}>1\text{\ on }\hat{D}
\end{cases}
\]
and $h_{\rho}\left(\hat{x},y\right):=g_{\rho}\left(\hat{x},y\right)\cdot\prod_{i=0}^{l}\hat{a_{i}}\circ\rho\left(\hat{x}\right)\in\mathcal{A}_{m_{\rho},n_{\rho},r_{\rho}}.$
By Theorem \ref{thm: geom monomial} and Remark \ref{rem: monomialise respecting f},
there is a finite family $\tilde{\mathcal{F}}_{\rho}$ of admissible
transformations such that $\tilde{\mathcal{F}}_{\rho}$ satisfies
the covering property with respect to $\hat{I}_{m_{\rho},n_{\rho},r_{\rho}}$
and for every $\tilde{\rho}:\hat{I}_{m_{\tilde{\rho}},n_{\tilde{\rho}},r_{\tilde{\rho}}}\to\hat{I}_{m_{\rho},n_{\rho},r_{\rho}}\in\tilde{\mathcal{F}}_{\rho}$
we have that $\tilde{\rho}$ respects $f\circ\hat{\rho}$ and finally
there are $\gamma_{0},\ldots,\gamma_{l+1}\in\mathbb{A}^{N}$ and units
$v_{0}\left(x\right),\ldots,v_{l+1}\left(x\right)\in\mathcal{A}_{m_{\tilde{\rho}},n_{\tilde{\rho}},r_{\tilde{\rho}}}$
such that
\[
\hat{a}_{i}\circ\rho\circ\tilde{\rho}\left(x\right)=x^{\gamma_{i}}v_{i}\left(x\right)\text{\ for\ }i=0,\ldots,l\text{,\ and\ }g_{\rho}\circ\tilde{\rho}\left(x\right)=x^{\gamma_{l+1}}v_{l+1}\left(x\right).
\]
After a suitable sequence $\sigma$ of sign-changing transformations
as in \ref{Proviso: admissible transf}, we may assume that $x\in\left(\mathbb{R}^{\geq0}\right)^{N}$,
hence,
\[
\varphi_{i}\circ\hat{\rho}\circ\tilde{\rho}\circ\sigma\left(x\right)=\begin{cases}
x^{\gamma_{l+1}\beta_{i}+\gamma_{i}}U_{i}\left(x\right) & \text{if\ }a_{0}\leq1\text{\ and\ }a_{i}\leq1\text{\ on }\hat{D}\\
x^{\gamma_{l+1}\beta_{i}-\gamma_{i}}U_{i}\left(x\right) & \text{if\ }a_{0}\leq1\text{\ and\ }a_{i}>1\text{\ on }\hat{D}\\
x^{-\gamma_{0}+\gamma_{l+1}\beta_{i}+\gamma_{i}}U_{i}\left(x\right) & \text{if\ }a_{0}>1\text{\ and\ }a_{i}\leq1\text{\ on }\hat{D}\\
x^{-\gamma_{0}+\gamma_{l+1}\beta_{i}-\gamma_{i}}U_{i}\left(x\right) & \text{if\ }a_{0}>1\text{\ and\ }a_{i}>1\text{\ on }\hat{D}
\end{cases},
\]
for some $U_{i}$ as in the statement of the lemma. Notice that all
the multi-exponents in the expression above belong to $\left(\mathbb{K}^{\geq0}\right)^{N}$,
because $\Phi$ is bounded. Hence, after an appropriate series of
ramifications, the above multi-exponents belong to $\mathbb{A}^{N}$
and we can conclude by the first remark in \ref{rem: evolution of phi}. 
\end{proof}

\subsection{(A)$_{N-1}$ and (B)$_{N,l'}$ $\left(\forall l'\leq l-1\right)$
imply (B)$_{N,l}$}

Let $\hat{u}=\left(u_{1},\ldots,u_{j\left(\Phi\right)-1}\right)$
and $v=u_{j\left(\Phi\right)}$. Let $\Phi'=\left(\varphi_{1},\ldots,\varphi_{j\left(\Phi\right)-1}\right)$.

We first show the following reduction.
\begin{void}
\label{vuoto: reduce to F regular in v}We can assume that $F_{0}\left(x,\hat{u},v\right)$
is regular of some order $d>0$ in the variable $v$, after factoring
out a monomial in the variables $x,\hat{u}$. 
\end{void}
It is enough to show that the series $\mathcal{T}\left(F_{0}\right)\left(X,\hat{U},V\right)$
is regular in the variable $V$. Clearly, we cannot simply perform
a linear transformation in the variables $x_{\check{m}+1},\ldots,x_{N},\hat{u}$
as we did in Section \ref{sub:Monomialisation-of-generalised}, because
that would not be a vertical transformation. We will follow a different
approach, inspired by the methods in commutative algebra to prove
that the ring of formal power series is Noetherian. Clearly the ring
of generalised power series is not Noetherian. However, Lemma \ref{lem:noether}
below gives us a finiteness result which is enough for our purposes.
\begin{notation}
Let $m,n,m',n',n,l\in\mathbb{N}$, with $m+n=m'+n'=N$. Let $Z=\left(Z_{1},\ldots,Z_{m}\right),\ Y=\left(Y_{1},\ldots,Y_{n}\right),\ U=\left(U_{1},\ldots,U_{l}\right)$
and $Z'=\left(Z_{1}',\ldots,Z_{m'}'\right),\ Y'=\left(Y_{1}',\ldots,Y_{n'}'\right),\ U'=\left(U_{1}',\ldots,U_{l}'\right)$.
If $V\in\left\{ Y_{1},\ldots,Y_{n},U_{1},\ldots,U_{l}\right\} ,$
we denote by $\widehat{\left(Y,U\right)}$ the set $\left\{ Y_{1},\ldots,Y_{n},U_{1},\ldots,U_{l}\right\} \setminus\left\{ V\right\} $.\end{notation}
\begin{void}
[Formal Weierstrass division]\label{emp:weierstrass}Let $F,G\in\mathbb{R}\left\llbracket Z^{*},Y,U\right\rrbracket $
and suppose that $G$ is \emph{regular of order $d\in\mathbb{N}$
in the variable $V$}, i.e. $G\left(0,V\right)=V^{d}W\left(V\right)$,
where $W\left(V\right)\in\mathbb{R}\left\llbracket V\right\rrbracket $
and $W\left(0\right)\not=0$. Then, by \cite[4.17]{vdd:speiss:gen},
there exist $Q\in\mathbb{R}\left\llbracket Z^{*},Y,U\right\rrbracket $
and $R=\sum_{i=0}^{d-1}B_{i}\left(Z,\widehat{\left(Y,U\right)}\right)V^{i}\in\mathbb{R}\left\llbracket Z^{*},\widehat{\left(Y,U\right)}\right\rrbracket \left[V\right]$
such that $F=G\cdot Q+R$. A careful analysis of the proof of \cite[4.17]{vdd:speiss:gen}
shows that $\mathrm{Supp}_{Z}\left(Q\right)$ and $\mathrm{Supp}_{Z}\left(R\right)$
are contained in the good set $\Sigma\left(\mathrm{Supp}_{Z}\left(F\right)\cup\mathrm{Supp}_{Z}\left(G\right)\right)$.\end{void}
\begin{lem}
\label{lem:noether} Let $\mathcal{G}=\left\{ F_{i}=\left(F_{i,0},\ldots,F_{i,d-1}\right):\ i\in\mathbb{N}\right\} \subset\left(\mathbb{R}\left\llbracket Z^{*},Y,U\right\rrbracket \right)^{d}$
be a family with good total support. Then there exists an admissible
tree $T$ such that, for every branch $\mathfrak{b}$ of $T$, acting
as $T{}_{\mathfrak{b}}:\mathbb{R}\left\llbracket Z{}^{*},Y,U\right\rrbracket \to\mathbb{R}\left\llbracket Z'^{*},Y',U'\right\rrbracket $,
the $\mathbb{R}\left\llbracket Z'^{*},Y',U'\right\rrbracket $-module
generated by the set 
\[
\left\{ \left(T_{\mathfrak{b}}\left(F_{i,1}\right),\ldots,T_{\mathfrak{b}}\left(F_{i,d}\right)\right):\ i\in\mathbb{N}\right\} 
\]
 is finitely generated. Moreover, $T_{\mathfrak{b}}$ induces an admissible
transformation which is vertical (with respect to the variables $U$). 
\end{lem}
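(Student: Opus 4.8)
The plan is to run the proof of the Hilbert basis theorem for the ring $\mathbb{R}\llbracket Z^{*},Y,U\rrbracket$, using the monomialisation machinery of Section~\ref{sec:Monomialisation of series} — chiefly Lemma~\ref{lem:singular blow ups}, Theorem~\ref{thm: monomialisation} and the formal Weierstrass division~\ref{emp:weierstrass} — to compensate for the failure of Noetherianity, while checking at every step that the elementary transformations used combine into a vertical admissible transformation with respect to $U$ (in the sense of Definition~\ref{def:vertical transformations}). I would argue by induction on the pair (number of non-$Z$ variables, $d$), ordered lexicographically; the case $d=0$ is trivial, and so is the case in which $\mathcal{G}$ consists of zero tuples.

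The first step is to apply Lemma~\ref{lem:singular blow ups} to the good-total-support family $\{F_{i,j}:i\in\mathbb{N},\ 0\le j<d\}$ of all coordinate series. The resulting tree uses only ramifications and blow-up charts in the $Z$-variables, hence is automatically vertical with respect to $U$; along each branch we obtain a common monomial $Z^{\alpha}$ with $F_{i,j}=Z^{\alpha}G_{i,j}$ and $G_{k_{0},j_{0}}\left(0,\cdot\right)\not\equiv0$ for some pair $\left(k_{0},j_{0}\right)$. Since multiplying all generators by the scalar $Z^{\alpha}$ does not affect finite generation, we pass to the module generated by the tuples $\left(G_{i,0},\dots,G_{i,d-1}\right)$, and after a permutation of the $d$ coordinates (which merely permutes the standard basis of the free module) we may assume $j_{0}=0$, i.e.\ $G_{k_{0},0}\left(0,Y,U\right)\not\equiv0$. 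If $G_{k_{0},0}$ is a unit we finish immediately: for every $i$ the tuple $G_{i}-\left(G_{i,0}/G_{k_{0},0}\right)G_{k_{0}}$ has zero first coordinate, so that the module is generated by $G_{k_{0}}$ together with an $\mathbb{N}$-indexed family of $(d-1)$-tuples (still of good total support, by closure of good sets under $\Sigma$), and we invoke the inductive hypothesis with $d-1$ in place of $d$.

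Otherwise, I would monomialise the \emph{single} series $G_{k_{0},0}$ by Theorem~\ref{thm: monomialisation}, taking care — exactly as in the discussion preceding this lemma — that every blow-up chart used blows up a $U$-variable along a $Z$- or $Y$-variable (never the reverse, and never two $U$-variables one against the other) and that the Tschirnhausen translations and linear transformations respect the ordering $Z<Y<U_{1}<\dots<U_{l}$, so that the resulting tree is a vertical admissible transformation; this is possible because monomialising a series of $\mathbb{R}\llbracket Z^{*},\left(Y,U\right)\rrbracket$ only ever calls for elementary transformations of these kinds. Because $G_{k_{0},0}$ is nonzero and non-unit with $G_{k_{0},0}\left(0,\cdot\right)\not\equiv0$, its normal form has a monomial part involving some non-$Z$ variable $v$; factoring the remaining part of that monomial (a monomial in the variables other than $v$, again harmless for finite generation) out of the first coordinate of every tuple, we obtain $G_{k_{0},0}=v^{\delta}\cdot(\text{unit})$ with $\delta\ge1$, that is, $G_{k_{0},0}$ is regular of order $\delta$ in $v$. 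The remaining coordinate series are carried along by the same tree and, by Remarks~\ref{rem:tree preserves good families}, retain good total support.

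Finally I would eliminate $v$. Dividing each $F_{i,0}$ (in the new notation) by $G_{k_{0},0}$ via formal Weierstrass division~\ref{emp:weierstrass} and subtracting the appropriate multiple of the tuple $G_{k_{0}}$, one replaces each generator by one whose first coordinate is the Weierstrass remainder, a polynomial of degree $<\delta$ in $v$ with coefficients in $\mathbb{R}\llbracket Z^{*},Y,U\setminus\{v\}\rrbracket$, while by the support bound of~\ref{emp:weierstrass} and \cite[Lemma~4.3]{vdd:speiss:gen} the modified family still has good total support. Proceeding in this Hilbert-basis style — choosing the eliminable variable and the variable ordering suitably at each round so that the $v$-polynomial form of the coordinates already treated is preserved, and then reading off the $v$-coefficients — one represents the module, up to finitely many explicit generators, as the one generated by an $\mathbb{N}$-indexed family of tuples of bounded length over $\mathbb{R}\llbracket Z^{*},Y,U\setminus\{v\}\rrbracket$, i.e.\ over a ring with one non-$Z$ variable fewer, still of good total support; the inductive hypothesis furnishes a vertical admissible tree finitely generating that module, and, composed with a final sign-splitting as in~\ref{Proviso: admissible transf} if $v$ must be kept on a half-line, it lifts to the tree required for $\mathcal{G}$. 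The main obstacle is precisely the verticality constraint: it forbids the usual ``a generic linear change makes $G_{k_{0},0}$ regular in the last variable'' step, since such a change would mix the $U$-variables with the $Z,Y$-variables and with one another, which is why one must pass through Lemma~\ref{lem:singular blow ups} and a scrupulously vertical monomialisation to manufacture a divisor that is regular in an \emph{eliminable} variable; the remaining, purely bookkeeping, difficulty is to verify that the good-support hypotheses propagate through all these operations (Remarks~\ref{rem:tree preserves good families}, the support bound in~\ref{emp:weierstrass}, and closure of good sets under $\Sigma$), which is what makes the nested inductions terminate.
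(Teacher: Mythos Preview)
Your proposal has a genuine gap, and it stems from a misreading of the verticality constraint. You assert that ``a generic linear change makes $G_{k_{0},0}$ regular in the last variable'' is forbidden because ``such a change would mix the $U$-variables with the $Z,Y$-variables and with one another.'' But look again at Definition~\ref{def:vertical transformations}: a transformation is vertical provided $\rho'$ (the $x$-part) does not depend on $u'$ and, for each fixed $x'$, the map $u'\mapsto\rho''(\langle x',u'\rangle)$ is a diffeomorphism. A linear change $L_{l,c}$ among the $U$-variables alone has $\rho'=\mathrm{id}$ and $\rho''$ a linear bijection in $u'$, so it \emph{is} vertical; likewise a linear change $L_{n',c}$ that adds multiples of the last $Y$-variable to earlier $Y$-variables does not touch $U$ at all and is trivially vertical. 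This is exactly how the paper proceeds: after Lemma~\ref{lem:singular blow ups} one has $G_{i_{0}}(0,Y',U')\not\equiv0$, and a single linear change of one of these two types makes $G_{i_{0}}$ regular in either $Y_{n'}'$ (if $Y'\neq\emptyset$) or $U_{l}'$ (if $Y'=\emptyset$), after which formal Weierstrass division~\ref{emp:weierstrass} reduces the total number of variables by one.

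Your substitute --- running Theorem~\ref{thm: monomialisation} on $G_{k_{0},0}$ ``scrupulously vertically'' --- is precisely what is not available. Theorem~\ref{thm: monomialisation} freely uses linear changes mixing all the natural-exponent variables and blow-ups $\pi_{m+i,m+j}$ between any pair of them; your claim that ``monomialising a series of $\mathbb{R}\llbracket Z^{*},(Y,U)\rrbracket$ only ever calls for elementary transformations of these kinds'' (i.e.\ vertical ones) is unjustified --- indeed, obtaining a vertical monomialisation algorithm is the purpose of all of Section~\ref{sec:Vertical-monomialisation}, and Lemma~\ref{lem:noether} is one of its ingredients. Even granting a vertical normal form $G_{k_{0},0}=(\text{monomial})\cdot(\text{unit})$, your next step of ``factoring the remaining part of that monomial \dots\ out of the first coordinate of every tuple'' is ill-defined: there is no reason the other $G_{i,0}$ are divisible by that monomial. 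The fix is not to monomialise at all, but to use the (vertical) linear change to get mere \emph{regularity} in one variable, which is all Weierstrass division needs.
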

In particular, for some $p\in\mathbb{N}$, $\left\{ \left(T_{\mathfrak{b}}\left(F_{i,0}\right),\ldots,T_{\mathfrak{b}}\left(F_{i,d}\right)\right):\ i=0,\ldots,p\right\} $
is a set of generators.
\begin{proof}
The proof is by induction on the pairs $\left(N+l,d\right)$, ordered
lexicographically.

Let us first examine the case $d=1$. If $N+l=1,\ $ then there exist
$\alpha\in[0,\infty)$ and $i_{0}\in\mathbb{N}$ such that $\forall i\in\mathbb{N}\ F_{i}=Z'^{\alpha}G_{i}$
and $G_{i_{0}}\left(0\right)\not=0$. Therefore the ideal generated
by the set $\left\{ F_{i}:\ i\in\mathbb{N}\right\} $ is principal,
generated by $F_{i_{0}}$. 

Let us suppose $N+l>1$. By Lemma \ref{lem:singular blow ups}, there
exists an admissible tree $T$ (acting as the identity on the variables
$U$) such that, for every branch $\mathfrak{b}$ of $T$, acting
as $T{}_{\mathfrak{b}}:\mathbb{R}\left\llbracket Z{}^{*},Y,U\right\rrbracket \to\mathbb{R}\left\llbracket Z'^{*},Y',U'\right\rrbracket $,
there exist $\alpha\in[0,\infty)^{m}$ and $G_{i}\in\mathbb{R}\left\llbracket Z'^{*},Y',U'\right\rrbracket $
such that $T_{\mathfrak{b}}\left(F_{i}\right)=Z'^{\alpha}G_{i}$ and
for some $i_{0}\in\mathbb{N}$, $G_{i_{0}}\left(0,Y',U'\right)\not\equiv0$. 

If $Y'=\emptyset$, then there is a linear change of coordinates $L_{l,c}$
in the $U'$-variables such that $G_{i_{0}}$ is regular of order
$d'\in\mathbb{N}$ in the variable $U_{l}'$. 

If $Y'\not=\emptyset$, then there is a linear change of coordinates
$L_{n',c}$ in the $\left(Y',U'\right)$-variables such that $G_{i_{0}}$
is regular of order $d'\in\mathbb{N}$ in the variable $Y_{n'}'$.
Let $V$ be either $U_{l}'$ (in the first case) or $Y_{n'}'$ (in
the second case) and let $L_{c}$ be either $L_{l,c}$ (in the first
case) or $L_{n',c}$ (in the second case), and let us rename $Z'=Z$,
$Y'=Y$ and $U'=U$. By \ref{emp:weierstrass}, there are $Q_{i}\in\mathbb{R}\left\llbracket Z{}^{*},Y,U\right\rrbracket $
and $B_{i,0},\ldots,B_{i,d'-1}\in\mathbb{R}\left\llbracket Z{}^{*},\widehat{\left(Y,U\right)}\right\rrbracket $
such that 
\[
L_{c}\left(G_{i}\right)=L_{c}\left(G_{i_{0}}\right)\cdot Q_{i}+R_{i},
\]
 where $R_{i}=\sum_{j=0}^{d'-1}B_{i,j}\left(Z,\widehat{\left(Y,U\right)}\right)V{}^{j}.$ 

By Remark \ref{rem:tree preserves good families}, the family $\left\{ L_{c}\left(G_{i}\right):\ i\in\mathbb{N}\right\} $
has good total support, and hence, by \ref{emp:weierstrass}, so does
the family $\mathcal{B}=\left\{ B_{i}=\left(B_{i,0},\ldots,B_{i,d'-1}\right):\ i\in\mathbb{N}\right\} \subset\left(\mathbb{R}\left\llbracket Z{}^{*},\widehat{\left(Y,U\right)}\right\rrbracket \right)^{d'}$. 

By the inductive hypothesis, there is an admissible tree $T'$ (acting
as the identity on the variable $V$) such that, for every branch
$\mathfrak{b'}$ of $T'$, we have $T'_{\mathfrak{b'}}:\mathbb{R}\left\llbracket Z{}^{*},Y,U\right\rrbracket \to\mathbb{R}\left\llbracket Z'^{*},Y',U'\right\rrbracket $
, the $\mathbb{R}\left\llbracket Z'^{*},\widehat{\left(Y',U'\right)}\right\rrbracket $-module
generated by the set $T'_{\mathfrak{b'}}\left(\mathcal{B}\right)$
is finitely generated and $T_{\mathfrak{b'}}'$ acts vertically with
respect to the variables $U$. Let us again rename $Z'=Z$, $Y'=Y$
and $U'=U$. We may suppose that, for some $k\in\mathbb{N}$, the
generators are $T'_{\mathfrak{b'}}\left(B_{0}\right),\ldots,T'_{\mathfrak{b'}}\left(B_{k}\right)$.
Hence, $\forall i\in\mathbb{N}$, there exist series $C_{i,0},\ldots,C_{i,k}\in\mathbb{R}\left\llbracket Z{}^{*},\widehat{\left(Y,U\right)}\right\rrbracket $
such that $T'_{\mathfrak{b'}}\left(B_{i}\right)=\sum_{s=0}^{k}C_{i,s}T'_{\mathfrak{b'}}\left(B_{s}\right)$. 

Notice that 
\[
T'_{\mathfrak{b'}}\left(R_{i}\right)=\sum_{j=0}^{d'-1}T'_{\mathfrak{b'}}\left(B_{i,j}\right)V{}^{j}=\sum_{s=0}^{k}C_{i,s}\sum_{j=0}^{d'-1}T'_{\mathfrak{b'}}\left(B_{s}\right)V{}^{j}=\sum_{s=0}^{k}C_{i,s}T'_{\mathfrak{b'}}\left(R_{s}\right).
\]
Finally, 
\begin{align*}
T_{\mathfrak{b}}\circ L_{c}\circ T'_{\mathfrak{b'}}\left(F_{i}\right)=T'_{\mathfrak{b'}}\left(Z{}^{\alpha}\cdot L_{c}\left(G_{i_{0}}\right)\cdot Q_{i}\right)+\sum_{s=0}^{k}C_{i,s}T'_{\mathfrak{b'}}\left(Z{}^{\alpha}\cdot R_{s}\right)=\\
=\left[L_{c}\circ T'_{\mathfrak{b'}}\left(Z{}^{\alpha}\cdot G_{i_{0}}\right)\right]\cdot T'_{\mathfrak{b'}}\left(Q_{i}\right)+\sum_{s=0}^{k}C_{i,s}\cdot\left[L_{c}\circ T'_{\mathfrak{b'}}\left(Z{}^{\alpha}\cdot G_{i}\right)-L_{c}\circ T'_{\mathfrak{b'}}\left(Z{}^{\alpha}\cdot G_{i_{0}}\right)\cdot T'_{\mathfrak{b'}}\left(Q_{i}\right)\right].
\end{align*}
 The series within square brackets in the last line of above formula
are indeed elements of the ideal generated by the set $S=\left\{ T_{\mathfrak{b}}\circ L_{c}\circ T'_{\mathfrak{b'}}\left(F_{i}\right):\ i\in\mathbb{N}\right\} $.
In particular, we can choose a finite set of generators within the
set $S$ itself. This concludes the proof of the case $d=1$.

As for the general case, consider the family $\mathcal{G}'=\left\{ \left(F_{i,1},\ldots,F_{i,d-1}\right):\ i\in\mathbb{N}\right\} \subset\left(\mathbb{R}\left\llbracket Z^{*},Y,U\right\rrbracket \right)^{d-1}$.
Since the total support of $\mathcal{G}'$ is good, we can apply the
inductive hypothesis and find an admissible tree $T$ such that, for
every branch $\mathfrak{b}$ of $T$, the module generated by $T_{\mathfrak{b}}\left(\mathcal{G}'\right)$
is finitely generated. 

Let $\left\{ \left(T_{\mathfrak{b}}\left(F_{i,1}\right),\ldots,T_{\mathfrak{b}}\left(F_{i,d-1}\right)\right):\ i\leq p\right\} $
be a set of generators, for some $p\in\mathbb{N}$. Now consider the
family $\mathcal{G}''=\left\{ T_{\mathfrak{b}}\left(F_{i,0}\right):\ i\in\mathbb{N}\right\} \subset\mathbb{R}\left\llbracket Z'^{*},Y',U'\right\rrbracket $.
By Remark \ref{rem:tree preserves good families}, $\mathcal{G}''$
has good total support, hence there exists an admissible tree $T'$
such that, for every branch $\mathfrak{b'}$ of $T'$, the ideal generated
by $T'_{\mathfrak{b'}}\left(\mathcal{G}''\right)$ is finitely generated.
Let $\left\{ T_{\mathfrak{b}}\circ T'_{\mathfrak{b'}}\left(F_{i,0}\right):\ i\leq s\right\} $
be a set of generators, for some $s\in\mathbb{N}$. It is then clear
that the module generated by $T'_{\mathfrak{b'}}\circ T_{\mathfrak{b}}\left(\mathcal{G}\right)$
is generated by the set $\left\{ \left(T_{\mathfrak{b}}\circ T'_{\mathfrak{b'}}\left(F_{i,0}\right),0\right):\ i\leq s\right\} \cup\left\{ \left(0,T_{\mathfrak{b}}\circ T'_{\mathfrak{b'}}\left(F_{i,1}\right),\ldots,T_{\mathfrak{b}}\circ T'_{\mathfrak{b'}}\left(F_{i,d-1}\right)\right):\ i\leq p\right\} $.
\end{proof}
Going back to the proof of \ref{vuoto: reduce to F regular in v},
let $F_{0}\left(x,\hat{u},v\right)=\sum_{i\geq0}f_{i}\left(x,\hat{u}\right)v^{i}$,
where $f_{i}\left(x,\hat{u}\right)=\frac{1}{i!}\frac{\partial^{i}F_{0}}{\partial v^{i}}\left(x,\hat{u},0\right)$,
which, by a remark in \ref{rems: properties of the algebras}, belongs
to $\mathcal{A}_{\check{m},\check{n}+j\left(\Phi\right)-1}$. The
family $\mathcal{G}=\left\{ f_{i}:\ i\in\mathbb{N}\right\} $ has
good total support, hence, by Lemma \ref{lem:noether} and Remark
\ref{rem: elemntary transf send quadrants to sectors}, there is a
finite family $\mathcal{F}$ of vertical admissible transformations
$\rho:\hat{I}_{m_{\rho},n_{\rho}+j\left(\Phi\right)-1,r_{\rho}}\to\hat{I}_{m,n+j\left(\Phi\right)-1,r}$,
where $m=\text{max}\left\{ \hat{m},\check{m}\right\} $ (hence they
all respect $f$), such that $\mathcal{F}$ satisfies the covering
property with respect to $\hat{I}_{m,n+j\left(\Phi\right)-1,r}$ (and
hence with respect to $\Phi'$ ) and for every $\rho\in\mathcal{F}$,
either $j\left(\Phi_{\rho}'\right)<j\left(\Phi'\right)$, or $j\left(\Phi_{\rho}'\right)=j\left(\Phi'\right)=j\left(\Phi\right)-1\leq l-1$
and the ideal generated by the family $\left\{ f_{i}\circ\rho:\ i\in\mathbb{N}\right\} $
is generated by $f_{0}\circ\rho,\ldots,f_{p}\circ\rho$, for some
$p\in\mathbb{N}$, i.e. there are series $Q_{i,n}\left(X,\hat{U}\right)$,
not necessarily in $\text{Im}\left(\mathcal{T}\right)$, such that
for all $n\in\mathbb{N},\ f_{n}\circ\rho=\sum_{i=0}^{p}Q_{i,n}\cdot f_{i}\circ\rho$.
Hence we can write
\[
\mathcal{T}\left(F_{0}\circ\rho\right)=\sum_{i=0}^{p}\mathcal{T}\left(f_{i}\circ\rho\right)\left(X,\hat{U}\right)V^{i}W_{i}\left(X,\hat{U},V\right),
\]
where the series $W_{i}=1+\sum_{n>p}Q_{i,n}\left(X,\hat{U}\right)V^{n-i}$
are units, not necessarily in $\text{Im}\left(\mathcal{T}\right)$.
We can apply the inductive hypothesis \textbf{(B)}$_{N,j\left(\Phi\right)-1}$
to $\tilde{F}\left(x,\hat{u}\right):={\displaystyle \prod_{0\leq i,j\leq p,i\not=j}f_{i}\circ\rho\left(f_{i}\circ\rho-f_{j}\circ\rho\right)}$
and $\tilde{\Phi}:=\Phi_{\rho}':D_{\rho}\to\mathbb{R}^{j\left(\Phi'\right)}$
and obtain that there exist a sub-$\Lambda$-set $\tilde{S}\subseteq D_{\rho}$
of dimension $\leq N-1$ and a finite family $\tilde{\mathcal{F}}$
of vertical admissible transformations $\tilde{\rho}:\hat{I}_{m_{\tilde{\rho}},n_{\tilde{\rho}}+j\left(\Phi'\right),r_{\tilde{\rho}}}\to\hat{I}_{m_{\rho},n_{\rho}+j\left(\Phi'\right),r_{\rho}}$
such that $\tilde{\mathcal{F}}$ satisfies the covering property with
respect to $\left(\tilde{\Phi},\tilde{S}\right)$ and for every $\tilde{\rho}\in\tilde{\mathcal{F}}$,
$\tilde{\rho}$ respects $f\circ\rho$ and either $j\left(\tilde{\Phi}_{\tilde{\rho}}\right)<j\left(\tilde{\Phi}\right)$
or $f_{0}\circ\rho\circ\tilde{\rho},\ldots,f_{p}\circ\rho\circ\tilde{\rho}$
are all normal and (by Lemma \ref{lem: prod of series}) linearly
ordered by division. In this latter case, after factoring out a monomial,
we obtain that $F_{0}\circ\rho\circ\tilde{\rho}$ is regular in $v$.
By Remarks \ref{rem: evolution of phi}, we have proved \ref{vuoto: reduce to F regular in v}.

\medskip{}
The next step is to show the following reduction.
\begin{void}
\label{vuoto: chute d'ordre}We may assume that $F_{0}$ is regular
of order $1$ in the variable $v$.

Since we may assume that $F_{0}$ is regular of some order $d>1$
in the variable $v$, after a Tschirnhausen translation, we can write
\[
F_{0}\left(x,\hat{u},v\right)=W\left(x,\hat{u},v\right)v^{d}+a_{2}\left(x,\hat{u}\right)v^{d-2}+\ldots+a_{d}\left(x,\hat{u}\right),
\]
where $a_{i}\in\mathcal{A}_{\check{m},\check{n}+j\left(\Phi\right)-1}$
and $W\in\mathcal{A}_{\check{m},\check{n}+j\left(\Phi\right)}$ is
a unit. By Lemma \ref{lem: weak monomialisation}, we may assume that
there are constants $K_{1},K_{2}>0$, a function $U:D\to\mathbb{R}$
(whose graph is a sub-$\Lambda$-set) and a multi-exponent $\alpha\in\mathbb{A}^{N}$
such that $\varphi_{j\left(\Phi\right)}\left(x\right)=x^{\alpha}U\left(x\right)$
and $K_{1}<|U|<K_{2}$. Without loss of generality we may assume $U$
is strictly positive on $D$.

Let $a_{0}\left(x,\hat{u}\right)=1,\ a_{1}\left(x,\hat{u}\right)=0$
and consider the family $\left\{ x^{\alpha\left(d-i\right)}a_{i}\left(x,\hat{u}\right)\right\} _{i=0,\ldots,d}$.
We apply \textbf{(B)}$_{N,j\left(\Phi\right)-1}$ simultaneously to
the members of this family, i.e. to the function $A\left(x,\hat{u}\right)=\prod_{0\leq i,j\leq d,i\not=j}x^{\alpha\left(d-i\right)}a_{i}\left(x,\hat{u}\right)\left(x^{\alpha\left(d-i\right)}a_{i}\left(x,\hat{u}\right)-x^{\alpha\left(d-j\right)}a_{j}\left(x,\hat{u}\right)\right)$,
and to $\Phi'$. Hence there are a sub-$\Lambda$-set $S\subseteq D$
of dimension $<N$ and a finite family of vertical admissible transformations
which extends trivially to a finite family $\mathcal{F}$ of vertical
admissible transformations $\rho:\hat{I}_{m_{\rho},n_{\rho}+j\left(\Phi\right),r_{\rho}}\to\hat{I}_{m,n+j\left(\Phi\right),r}$,
where $m=\text{max}\left\{ \check{m},\hat{m}\right\} $ and $n=N-m$
(hence $\rho'$ respects $f$), such that $\mathcal{F}$ satisfies
the covering property with respect to $\left(\hat{\Phi},S\right)$
and for every $\rho\in\mathcal{F}$, either $j\left(\hat{\Phi}_{\rho}\right)<j\left(\hat{\Phi}\right)$
or, for all $i=2,\ldots,d$, there exist $\gamma_{i}\in\mathbb{A}^{N},\ \delta_{i}\in\mathbb{N}^{j\left(\Phi\right)-1}$
and units $W_{i}\in\mathcal{A}_{m_{\rho},n_{\rho}+j\left(\Phi\right)-1}$
such that $a_{i}\circ\rho\left(x,\hat{u}\right)=x^{\gamma_{i}}\hat{u}^{\delta_{i}}W_{i}\left(x,\hat{u}\right)$.
Moreover, by Lemma \ref{lem: prod of series}, the family $\left\{ \left(x^{\alpha\left(d-i\right)}a_{i}\left(x,\hat{u}\right)\right)\circ\rho\right\} _{i=0,\ldots,d}$
is linearly ordered by division, hence the set of $\left(N+j\left(\varphi\right)\right)$-tuples
$\mathcal{E}=\left\{ \left(\beta\left(d-i\right)+\gamma_{i},\delta_{i}\right)\right\} _{i=0,\ldots,d}$
is totally ordered. Notice that $\varphi_{j\left(\varphi\right)}\circ\rho'\left(x\right)$
is still weakly normal, i.e. there exists $\beta\in\mathbb{A}^{N}$
such that $\varphi_{j\left(\Phi\right)}\circ\rho'\left(x\right)=x^{\beta}U\circ\rho'\left(x\right)$,
and, after suitable ramifications of the variables $x_{1},\ldots,x_{\check{m}}$,
we may suppose that $\beta\in\mathbb{N}^{N}$. 

Let $N_{0}=\text{max}\left\{ M\leq N:\ \beta_{M}\not=0\right\} $.
Define, for $j=1,\ldots,N_{0}-1$, $\rho_{\lambda,j}\left(x,\hat{u},v\right)=x_{j}^{\beta_{j}}v$
and $\rho_{\lambda,N_{0}}\left(x,\hat{u},v\right)=x_{N_{0}}^{\beta_{N_{0}}}\left(\lambda+v\right)$.
Notice that the function $\left(x',\hat{u}',v'\right)\mapsto v=\rho_{\lambda,1}\circ\ldots\circ\rho_{\lambda,N_{0}-1}\circ\rho_{\lambda,N_{0}}\left(x',\hat{u}',v'\right)$
is a finite composition of blow-up charts and extends trivially to
a vertical admissible transformation $\rho_{\lambda}:\hat{I}_{m_{\rho},n_{\rho}+j\left(\Phi\right),r_{\lambda}}\to\hat{I}_{m_{\rho},n_{\rho}+j\left(\Phi\right),r_{\rho}}$
.

Let
\[
\xyC{0mm}\xyL{0mm}\xymatrix{\varepsilon\colon & \mathbb{R}^{>0}\ar[rrrr] & \  & \  & \  & \mathbb{R}^{>0}\\
 & \lambda\ar@{|->}[rrrr] &  &  &  & \varepsilon_{\lambda}
}
\]
be any fixed function and let $\mathcal{G}$ be a finite family of
positive real numbers such that $\left[K_{1},K_{2}\right]\subseteq\bigcup_{\lambda\in\mathcal{G}}\left(\lambda-\varepsilon_{\lambda},\lambda+\varepsilon_{\lambda}\right)$.

Let $\mathcal{F}_{\mathcal{G}}=\left\{ \rho_{\lambda}:\ \lambda\in\mathcal{G}\right\} $.
Since for all $x\in D$ there exists $\lambda\in\mathcal{G}$ such
that $x^{\beta}\left(\lambda-\varepsilon_{\lambda}\right)<\varphi_{j\left(\Phi\right)}\left(x\right)<x^{\beta}\left(\lambda+\varepsilon_{\lambda}\right)$,
the family $\mathcal{F}_{\mathcal{G}}$ satisfies the covering property
with respect to $\left(\hat{\Phi}_{\rho},S_{\rho}\right)$. We show
that, for every $\rho_{\lambda}\in\mathcal{F}_{\mathcal{G}}$, either
$j\left(\left(\hat{\Phi}_{\rho}\right)_{\rho_{\lambda}}\right)<j\left(\hat{\Phi}_{\rho}\right)$,
or, possibly after factoring out a monomial in the variables $x$,
$F_{0}\circ\rho\circ\rho_{\lambda}$ is regular of order $d'<d$ in
the variable $v$. Let $\tilde{F}=F_{0}\circ\rho\circ\rho_{\lambda}$
and $\tilde{W}\left(x,\hat{u},v\right)=W\circ\rho\circ\rho_{\lambda}\left(x,\hat{u},v\right)$,
which is still a unit. Then,
\[
\tilde{F}\left(x,\hat{u},v\right)=\tilde{W}\left(x,\hat{u},v\right)\left(\lambda+v\right)^{d}x^{\beta d}+\left(\lambda+v\right)^{d-2}x^{\beta\left(d-2\right)+\gamma_{2}}\hat{u}^{\delta_{2}}W_{2}\left(x,\hat{u}\right)+\ldots+x^{\gamma_{d}}\hat{u}^{\delta_{d}}W_{d}\left(x,\hat{u}\right).
\]
Let $i_{0}=\text{min}\left\{ i:\ 0\leq i\leq d,\ \left(\beta\left(d-i\right)+\gamma_{i},\delta_{i}\right)=\text{min}\left(\mathcal{E}\right)\right\} $.
Notice that necessarily $\delta_{i_{0}}=0$. After factoring $\tilde{F}$
out by the monomial $x^{\beta\left(d-i_{0}\right)+\gamma_{i_{0}}}$,
we obtain that, either $i_{0}=0$ and, thanks to the Tschirnhausen
translation we did at the beginning, the coefficient of the term $v^{d-1}$
is a unit, or $i_{0}>0$ and the coefficient of the term $v^{d-i_{0}}$
is a unit. In either of the two cases, after factorisation $\tilde{F}$
has become regular of order $d'<d$ in $v$. Hence we can start again
with the procedure just described until we reduce to $d'=1$. By Remarks
\ref{rem: evolution of phi}, this concludes the proof of \ref{vuoto: chute d'ordre}.

\bigskip{}

Since $F_{0}$ is regular of order $1$ in the variable $v$, after
performing a last Tschirnhausen translation $\tau_{h}$ as in the
proof of Theorem \ref{thm: monomialisation}, we have that, either
$j\left(\hat{\Phi}_{\tau_{h}}\right)<j\left(\hat{\Phi}\right)$, or
$F_{0}\circ\tau_{h}\left(x,\hat{u},v\right)=x^{\alpha}\hat{u}^{\beta}v^{\gamma}W\left(x,\hat{u},v\right)$
for some $\alpha\in\mathbb{A}^{N},\left(\beta,\gamma\right)\in\mathbb{N}^{j\left(\Phi\right)}$
and a unit $W\in\mathcal{A}_{m,n+j\left(\Phi\right)}$ (where $m=\text{max}\left\{ \check{m},\hat{m}\right\} $).
Let $r$ be a polyradius in $\mathbb{R}^{N+j\left(\Phi\right)}$ such
that $W$ has a representative which does not vanish on $\hat{I}_{m,n+j\left(\Phi\right),r}$.
Notice that the size of the last coordinate of $r$ determines the
choice of $\varepsilon_{\lambda}$ in the proof of \ref{vuoto: chute d'ordre}.

Hence we can conclude the proof of \textbf{(B)}$_{N,l}$ by Remarks
\ref{rem: evolution of phi}.
\end{void}

\subsection{(B)$_{N,N}$ and (C)$_{N,l'}$ $\left(\forall l'\leq l-1\right)$
imply (C)$_{N,l}$}

We apply \textbf{(B)}$_{N,N}$ to $F\left(x,u\right):=\prod_{i=1}^{N}f_{i}\left(x,u\right)$
and $\Phi$ and obtain that there exist a finite family $\mathcal{F}$
of vertical admissible transformations $\rho:\hat{I}_{m_{\rho},n_{\rho}+l,r_{\rho}}\to\hat{I}_{m,n+l,r}$,
where $m=\text{max}\left\{ \hat{m},\check{m}\right\} $ (hence $\rho'$
respects $f$), and$ $ a sub-$\Lambda$-set $S\subseteq D$ of dimension
strictly smaller than $N$ such that $\mathcal{F}$ satisfies the
covering property with respect to $\left(\hat{\Phi},S\right)$ and
for every $\rho\in\mathcal{F}$, either $j\left(\hat{\Phi}_{\rho}\right)<j\left(\Phi\right)$
or $F_{0}\circ\rho$ is normal. Let $\hat{\mathcal{F}}$ be the family
obtained by extending trivially each $\rho\in\mathcal{F}$ to $\hat{\rho}:\hat{I}_{m_{\rho},n_{\rho}+N,\widehat{r_{\rho}}}\to\hat{I}_{m,n+N,\hat{r}}$.
By Remarks \ref{rem: evolution of phi}, $\hat{\mathcal{F}}$ satisfies
the covering property with respect to $\left(\Phi,S\right)$.

Suppose first that $j\left(\hat{\Phi}_{\rho}\right)=l'<l$. Let $\tilde{f_{i}}\left(x,u\right)=f_{i}\circ\hat{\rho}\left(x,u\right),\ \tilde{D}=D_{\rho}$
and $\tilde{\Phi}=\left(\hat{\Phi}_{\rho},0\right):\tilde{D}\to\mathbb{R}^{N}$.
Then by \textbf{(C)}$_{N,l'}$ there exist a finite family $\tilde{\mathcal{F}}$
of vertical admissible transformations $\tilde{\rho}:\hat{I}_{m_{\tilde{\rho}},n_{\tilde{\rho}}+N,r_{\tilde{\rho}}}\to\hat{I}_{m_{\rho},n_{\rho}+N,\widehat{r_{\rho}}}$
(respecting $f\circ\rho'$) and $ $ a sub-$\Lambda$-set $\tilde{S}\subseteq\tilde{D}$
of dimension strictly smaller than $N$ such that $\tilde{\mathcal{F}}$
satisfies the covering property with respect to $\left(\tilde{\Phi},\tilde{S}\right)$
and for every $\tilde{\rho}\in\tilde{\mathcal{F}}$,
\[
\forall\left(x,u\right)\in\tilde{D}_{\tilde{\rho}}\times\mathbb{R}^{N}\ \ \ \ \ \begin{cases}
\tilde{f_{1}}\circ\tilde{\rho}\left(x,u\right)=0\\
\vdots\\
\tilde{f}_{N}\circ\tilde{\rho}\left(x,u\right)=0
\end{cases}\Leftrightarrow\ \ u=0.
\]
Hence, by Remarks \ref{rem: evolution of phi}, we are done in this
case.

Now suppose that $F_{0}\circ\rho$ is normal. Let $\hat{u}=\left(u_{1},\ldots u_{l}\right)$.
Then there are $\left(\alpha_{i},\beta_{i}\right)\in\mathbb{A}^{N}\times\mathbb{N}^{l}$
and units $U_{i}\in\mathcal{A}_{m_{\rho},n_{\rho}+l}$ such that $\tilde{f_{i}}\left(x,\hat{u}\right):=\left(f_{i}\right)_{0}\circ\rho\left(x,\hat{u}\right)=x^{\alpha_{i}}u^{\beta_{i}}U_{i}\left(x,\hat{u}\right)$.
Let $\tilde{D}=D_{\rho}$ and $\tilde{\Phi}=\hat{\Phi}_{\rho}$. Notice
that
\[
\forall\left(x,\hat{u}\right)\in\tilde{D}\times\mathbb{R}^{l}\ \ \ \ \ \begin{cases}
\tilde{f}_{1}\left(x,\hat{u}\right)=0\\
\vdots\\
\tilde{f}_{N}\left(x,\hat{u}\right)=0
\end{cases}\Leftrightarrow\ \ \hat{u}=\tilde{\Phi}\left(x\right).\tag{*}
\]

Let $\tilde{S}$ be the sub-$\Lambda$-set (of dimension strictly
smaller than $N$) $\left\{ x_{1}=0\right\} \cup\ldots\cup\left\{ x_{N}=0\right\} $.
Clearly we may assume that $\rho\left(\tilde{S}\right)\subseteq S$.
Then we have
\[
\forall\left(x,\hat{u}\right)\in\tilde{D}\setminus\tilde{S}\times\mathbb{R}^{l}\ \ \ \ \ \begin{cases}
\tilde{f}_{1}\left(x,\hat{u}\right)=0\\
\vdots\\
\tilde{f}_{N}\left(x,\hat{u}\right)=0
\end{cases}\Leftrightarrow\ \ \begin{cases}
\hat{u}^{\beta_{1}}=0\\
\vdots\\
\hat{u}^{\beta_{N}}=0
\end{cases}.\tag{**}
\]
We claim that $\forall x\in\tilde{D}\setminus\tilde{S}\ \ \tilde{\Phi}\left(x\right)=0$.
In fact if this is not the case then, without loss of generality,
for some $x\in\tilde{D}\setminus\tilde{S}$ we have $\tilde{\varphi}_{1}\left(x\right)\not=0$.
This means that for every $a\in\mathbb{R}$, the tuple $\left(x,a,\tilde{\varphi}_{2}\left(x\right),\ldots,\tilde{\varphi}_{l}\left(x\right)\right)$
satisfies the system of equations on the right side of ({*}{*}). But
this contradicts the equivalence in ({*}).

To conclude, notice that 
\[
\forall\left(x,u\right)\in\tilde{D}\setminus\tilde{S}\times\mathbb{R}^{N}\ \ \ \ \ \begin{cases}
f_{1}\circ\hat{\rho}\left(x,u\right)=0\\
\vdots\\
f_{N}\circ\hat{\rho}\left(x,u\right)=0
\end{cases}\Leftrightarrow\ \ \hat{u}=\tilde{\Phi}\left(x\right),\ u_{l+1}=\ldots=u_{N}=0.
\]

\subsection{(A)$_{N-1}$ and (C)$_{N,N}$ imply (A)$_{N}$}

Notice that, if $N=1$, then by \ref{vuoto: polybdd}, for all $x\in D\cap\mathbb{R}^{\geq0}$
we have $\eta\left(x\right)=x^{\alpha/\beta}U\left(x\right)$, for
some unit $U\in\mathcal{A}_{1}$ and $\alpha,\beta\in\mathbb{A}$.
If $\hat{m}=1$ then take $\mathcal{F}=\left\{ \sigma_{1}^{+}\circ r_{1}^{\beta}\right\} $,
if $\hat{n}=1$ then take $\mathcal{F}=\left\{ \sigma_{1}^{+}\circ r_{1}^{\beta},\sigma_{1}^{-}\circ r_{1}^{\beta}\right\} $.
In both cases $\mathcal{F}$ satisfies the conclusion of \textbf{(A)}$_{1}$.

Arguing as in the first paragraph of the proof of Corollary \ref{cor: def functions are terms}
and by the inductive hypothesis \textbf{(A)}$_{N-1}$, we may assume
that $\text{dim}\left(D\right)=N$. By the same argument, it is enough
to prove the statement for $\eta\restriction D\setminus S$, where
$S$ is any sub-$\Lambda$-set of dimension $<N$. 

Since $\Gamma\left(\eta\right)$ is a sub-$\Lambda$-set, we can apply
the Parametrisation Theorem \ref{thm: param subanal} and obtain that
$\Gamma\left(\eta\right)$ is the finite union of the diffeomorphic
images of sub-quadrants under maps whose components are in $\mathcal{A}$.
Without loss of generality, we may assume that $\Gamma\left(\eta\right)=H\left(Q\right)$,
where $Q\subseteq\left(\mathbb{R}^{>0}\right)^{N}$ is a sub-quadrant
of dimension $N$ and $H=\left(g_{1},\ldots,g_{N},h\right)\in\left(\mathcal{A}_{N,0}\right)^{N+1}$.
Moreover,
\[
\forall\left(x,z\right)\in\mathbb{R}^{N+1}\ \ \ \ \ x\in D\ \text{and}\ z=\eta\left(x\right)\ \Leftrightarrow\ \exists!u\in Q\ \text{s.t.}\ \begin{cases}
x_{1}=g_{i}\left(u\right)\\
\vdots\\
x_{N}=g_{N}\left(u\right)\\
z=h\left(u\right)
\end{cases}.
\]
In particular, there is a map $\Phi:D\to Q$, whose graph is a sub-$\Lambda$-set,
such that
\[
\forall\left(x,u\right)\in D\times Q\ \ \ \ \ \begin{cases}
f_{1}\left(x,u\right)=0\\
\vdots\\
f_{N}\left(x,u\right)=0
\end{cases}\Leftrightarrow\ \ u=\Phi\left(x\right),
\]
where $f_{i}\left(x,u\right)=x_{i}-g_{i}\left(u\right)$. Note that
the $f_{i}$ might not satisfy the hypotheses of \textbf{(C)}$_{N,N}$,
because the variables $u$ may appear with real exponents in $\mathcal{T}\left(f_{i}\right)$.
Our next task is to reduce to the case when $f_{i}\in\mathcal{A}_{\check{m},\check{n}+N}$.
In order to do this, we first apply Lemma \ref{lem: weak monomialisation}
in order to reduce to the case when all the components of $\Phi$
are weakly normal (notice that $j\left(\Phi\right)=N)$, i.e. $\varphi_{i}\left(x\right)=x^{\alpha_{i}}U_{i}\left(x\right)$,
where $\alpha_{i}\in\mathbb{A}$ and $\mathbb{R}_{\mathcal{A}}$-definable
functions $U_{i}$ which are bounded away from zero. Secondly, we
argue as in the proof of \ref{vuoto: chute d'ordre} and produce a
finite family $\mathcal{F}_{\mathcal{G}}=\left\{ \rho_{\lambda}:\ \in\mathcal{G}\right\} $
of vertical admissible transformations
\[
\rho_{\lambda}:\hat{I}_{m,n+N,r_{\lambda}}\to\hat{I}_{m+N,n,r}
\]
of type $\left(1,2\right)$, obtained by performing a series of ramifications
and blow-up charts, such that $\rho_{\lambda}\left(x,u\right)=\left\langle x,x^{\alpha_{1}}\left(\lambda+u_{1}\right),\ldots,x^{\alpha_{N}}\left(\lambda+u_{N}\right)\right\rangle $.
Notice that $f_{i}\circ\rho_{\lambda}\in\mathcal{A}_{m,n+N}$.

Now we can apply \textbf{(C)}$_{N,N}$ to the $f_{i}\circ\rho_{\lambda}$
and $\Phi_{\rho_{\lambda}}$ and obtain that there exist a finite
family $\tilde{\mathcal{F}}$ of vertical admissible transformations
\[
\tilde{\rho}:\hat{I}_{m_{\tilde{\rho}},n_{\tilde{\rho}}+N,r_{\tilde{\rho}}}\to\hat{I}_{m,n+N,r}
\]
 and $ $ a sub-$\Lambda$-set $\tilde{S}\subseteq D$ of dimension
strictly smaller than $N$ such that $\mathcal{F}$ satisfies the
covering property with respect to $\left(\Phi,S\right)$ and for every
$\tilde{\rho}\in\tilde{\mathcal{F}}$,
\[
\forall\left(x,u\right)\in D_{\rho}\setminus\left(\tilde{\rho}'\right)^{-1}\left(\tilde{S}\right)\times\mathbb{R}^{N}\ \ \ \ \ \begin{cases}
f_{1}\circ\rho_{\lambda}\circ\tilde{\rho}\left(x,u\right)=0\\
\vdots\\
f_{N}\circ\rho_{\lambda}\circ\tilde{\rho}\left(x,u\right)=0
\end{cases}\Leftrightarrow\ \ u=0.
\]
Summing up, there are a finite family $\mathcal{F}$ of vertical admissible
transformations 
\[
\rho:\hat{I}_{m_{\rho},n_{\rho}+N,r_{\rho}}\to\hat{I}_{m+N,n,r}
\]
of type $\left(1,2\right)$ such that $\rho'$ respects $f$ and $\rho''$
respects $h$, and a sub-$\Lambda$-set $S\subseteq D$ of dimension
strictly smaller than $N$ such that, by Remarks \ref{rem: evolution of phi},
the family $\mathcal{F}'=\left\{ \rho':\ \rho\in\mathcal{F}\right\} $
satisfies the covering property with respect to $D\setminus S$ and
for every $\rho\in\mathcal{F}$,
\[
\eta\circ\rho'\left(x\right)=h\circ\rho''\left(x,0\right)\in\mathcal{A}_{m_{\rho},n_{\rho}}.
\]
We conclude the proof of \textbf{(A)}$_{N}$ by Theorem \ref{thm: geom monomial}
and Remark \ref{rem: monomialise respecting f}. As in the proof of
\ref{vuoto: chute d'ordre}, the size of the domain on which $\eta$
has become normal determines the choice of $\varepsilon_{\lambda}$.

\bibliographystyle{amsalpha}
\def\cprime{$'$}
\providecommand{\bysame}{\leavevmode\hbox to3em{\hrulefill}\thinspace}
\providecommand{\MR}{\relax\ifhmode\unskip\space\fi MR }
% \MRhref is called by the amsart/book/proc definition of \MR.
\providecommand{\MRhref}[2]{%
  \href{http://www.ams.org/mathscinet-getitem?mr=#1}{#2}
}
\providecommand{\href}[2]{#2}

\bigskip{}

\emph{Jean-Philippe Rolin}

\emph{Institut de Mathématiques de Bourgogne - UMR 5584}

\emph{Université de Bourgogne }

\emph{BP 138 }

\emph{21004 Dijon cedex (France)}

\emph{email}: \href{mailto:rolin@u-bourgogne.fr}{rolin@u-bourgogne.fr}

\medskip{}

\emph{Tamara Servi}

\emph{Centro de Matemática e Applicaçoes Fundamentais}

\emph{Av. Prof. Gama Pinto, 2 }

\emph{1649-003 Lisboa (Portugal)}

\emph{email}: \href{mailto:tamara.servi@gmail.com}{tamara.servi@gmail.com}\emph{ }
\end{document}